\newtheorem{theorem}{Theorem}[section]
\newtheorem{corollary}[theorem]{Corollary}
\newtheorem{lemma}[theorem]{Lemma}
\newtheorem{proposition}[theorem]{Proposition}
\newtheorem*{Theorem DA}{Theorem A}
\newtheorem*{Theorem LRR}{Theorem B}
\newtheorem*{Theorem Pel}{Theorem C}
\newtheorem*{Theorem Al}{Aleksandrov's theorem}
\theoremstyle{definition}
\newtheorem{definition}[theorem]{Definition}
\numberwithin{equation}{section}
\newcommand{\D}{\mathbb D}
\newcommand{\R}{\mathbb R}
\newcommand{\C}{\mathbb C}
\newcommand{\T}{\mathbb T}
\newcommand{\E}{\mathbb E}
\newcommand{\cal}{\mathcal}
\newcommand{\ol}{\overline}
\newcommand{\Ran}{{\rm Ran}\,}
\newcommand{\Ker}{{\rm Ker}\,}
\newcommand{\la}{\langle}
\newcommand{\ra}{\rangle}
\newcommand{\one}{{\bf 1}}
\newcommand{\M}{\mathcal M}
\newcommand{\X}{\mathfrak{X}}
\newcommand{\sgn}{{\rm sgn}\,}
\title{Extension of contractive projections}
\author{Xiangdi Fu}
\address{Xiangdi Fu: School of Fundamental Physics and Mathematical Sciences, HIAS, University of Chinese Academy of Sciences, Hangzhou, 310024, China}
\email{xdfu@ucas.ac.cn}
\author{Kunyu Guo}
\address{Kunyu Guo: School of Mathematical Sciences, Fudan University, Shanghai, 200433, China}
\email{kyguo@fudan.edu.cn}
\author{Dilong Li}
\address{Dilong Li: School of Mathematical Sciences, Fudan University, Shanghai, 200433, China}
\email{dlli23@m.fudan.edu.cn}
\keywords{$L^p$ spaces, $H^p$ spaces, contractive projections, 1-complemented subspaces}
\subjclass[2020]{Primary: 47B91; Secondary: 46E15.}
\begin{document}
	\begin{abstract}
		Through the establishment of several extension theorems, we provide explicit expressions for all contractive projections and 1-complemented subspaces in the Hardy space $H^p(\T)$ for $1\leq p<\infty$, $p\neq 2$. Our characterization leads to two corollaries: first, all nontrivial 1-complemented subspaces of $H^p(\T)$ are isometric to $H^p(\T)$; second, all contractive projections on $H^p(\T)$ are restrictions of contractive projections on $L^p(\T)$ that leave $H^p(\T)$ invariant. The first corollary provides examples of prime Banach spaces \emph{in the isometric sense}, while the second answers a question posed by P. Wojtaszczyk in 2003 \cite{Wo03}.

	\end{abstract}
	\maketitle
	\tableofcontents

	\section{Introduction}
	
	\subsection{Background}
	
	Let $X$ be a Banach space. A linear operator $P: X\to X$ is called a {\it projection} if $P^2=P$, and we always assume $P\neq 0$. A closed subspace $M$ of $X$ is said to be {\it complemented} if there exists a bounded projection $P$ on $X$, such that the range of $P$ is $M$. Clearly, if $P$ is a nonzero projection, then $\|P\|\geq 1$. These projections satisfying $\|P\|=1$, called {\it contractive projections} (alternatively called {\it norm-one projections}), and their ranges, called {\it 1-complemented subspaces}, are of particular interest. By the Hahn-Banach theorem, every finite-dimensional subspace is complemented and every 1-dimensional subspace is 1-complemented. Therefore, by \emph{nontrivial complemented subspaces} we refer to complemented subspaces of infinite dimension, while by \emph{nontrivial 1-complemented subspaces} we refer to 1-complemented subspaces of dimension greater than one.
	
	Contractive projections in Banach spaces have received considerable attention from functional analysts since the field's inception, partly because they are a natural generalization of orthogonal projections in Hilbert spaces \cite{Ka39,Bo42,Ja45}. Moreover, 1-complemented subspaces also serve as important isometric invariants in the classification theory of Banach spaces \cite{LL71,Ros03}. In general, it seems hopeless to completely determine the contractive projections and 1-complemented subspaces of any given Banach space. Indeed, even for certain classical spaces, the known results are far from trivial. In his early work, A. Grothendieck \cite{Gr55} proved that $1$-complemented subspaces in any $L^1$-space are isometric to an $L^1$-space. For a probability measure $\mu$, R. G. Douglas \cite{Do65} completely characterized contractive projections on $L^1(\mu)$, and T. And\^o \cite{An} extended Douglas's results to $L^p(\mu)$ for $1<p<\infty$, $p\neq 2$. As indicated in their work, to characterize contractive projections on $L^p(\mu)$, it suffices to consider those that preserve the constants, since any contractive projection on $L^p(\mu)$ can be transformed into a contractive projection of this type. The theorems of Douglas and And\^o assert that if $P$ is a contractive projection on $L^p(\mu)$ $(1\leq p<\infty, \,p\neq 2)$ satisfying $P\one =\one$, then $P$ is a conditional expectation. As a corollary, every $1$-complemented subspace of $L^p(\mu)$ with $\mu$ a probability measure is isometric to an $L^p$-space. Later, L. Tzafriri \cite{Tz} proved that this statement remains true for any positive measure $\mu$, thereby extending Grothendieck's theorem to all $1\leq p< \infty$, $p\neq 2$. These seminal results in $L^p$-spaces have profoundly influenced further research on contractive projections and 1-complemented subspaces in various Banach spaces; see \cite{DHP90,FR,Ray,AF,Do95,Ran,LRG07} and the references therein. The excellent surveys \cite{Ran,Pel79} offer valuable historical insights and a detailed bibliography on these topics.

	However, despite the extensive work of many authors on contractive projections and 1-complemented subspaces, very little is known about them in spaces of analytic functions. The first result in this direction, to the best of the authors' knowledge, is due to P. Wojtaszczyk in 1979 \cite{Wo79}, who proved that $H^\infty(\T)$ and the disc algebra admit a few $1$-complemented subspaces of finite dimension; see \cite[Theorem 4.3]{Wo79}. For $1\leq p<\infty,p\neq 2$, F. Lancien, B. Randrianantoanina, and E. Ricard \cite[Theorem 3.1]{LRR} showed that $H^p(\T)$ does not admit any nontrivial 1-complemented subspace of finite dimension, and they also constructed several interesting 1-complemented subspaces of $H^p(\T)$. Nevertheless, the general structure of contractive projections on $H^p(\T)$ remains unknown, as Wojtaszczyk remarked in his lecture notes \cite{Wo03}:
	
	{\it ------``Very little is known about norm one projections in $H^1$ spaces. All known such projections are restrictions of norm one projections on $L^1$ which leave $H^1$ invariant. It is not known if there are any others.''}
	
	In addition to the above progress in $H^p(\T)$, O. F. Brevig, J. Ortega-Cerd\`a, and K. Seip \cite{BOS} recently obtained some remarkable results in $H^p(\T^d)$, the Hardy space over the $d$-dimensional torus. They completely determined those nonempty sets $\Gamma \subseteq \mathbb N^d_0$ for which the corresponding  idempotent Fourier multiplier
	\begin{align*} 
		P_\Gamma: \sum_{\alpha \in \mathbb N_0^d} c_\alpha z^\alpha \mapsto  \sum_{\alpha \in \Gamma} c_\alpha z^\alpha
	\end{align*}
	extends to a contraction on $H^p(\T^d)$ where $1\leq p\leq \infty$ and $d\geq 1$. Their results provide a full description of {\it contractive projections induced by idempotent Fourier multipliers} on $H^p(\T^d)$.

	\subsection{Main Results}
	
	Our main aim in this paper is to present explicit expressions for all contractive projections and 1-complemented subspaces in the Hardy space $H^p(\T)$ with $1\leq p<\infty$, $p\neq 2$.
	
	For \(1\leq p \leq \infty\), the Hardy space on the unit circle \(H^p(\T)\) consists of all functions in \(L^p(\T)\) whose Fourier coefficients vanish for the negative indices, that is, 
	\[
	H^p(\T):= \left\{ f\in L^p(\T): \frac{1}{2\pi}\int_{-\pi}^{\pi} f(e^{i\theta}) e^{i n \theta} d\theta =0,\quad \forall n\in \mathbb N \right\}.
	\] 
	Our terminology in the theory of Hardy spaces is consistent with the classical references \cite{Ho62,Gar81}.
	
	To simplify the statement of our main theorem, we will henceforth write $H^p$ for $H^p(\T)$, $L^p$ for $L^p(\T)$, and introduce the following notations. 
	
	\begin{definition}\label{1.1}
		For $1\leq p<\infty$, $p\neq 2$, we define the set $\X_p$ to consist of all pairs of functions $(\eta, \varphi)$ that satisfy the following conditions: 
		\begin{enumerate} 
			\item $\eta$ is an inner function with $\eta(0)=0$.
			\item $\varphi\in H^p$, and $\|\varphi\|_p=1$.
			\item Let $\varphi=\xi\cdot F$ be the inner-outer factorization of $\varphi$. The triple $(\xi, F, \eta)$ satisfies $$\xi\cdot F^{p/2}\in H^2\ominus \eta H^2.$$
		\end{enumerate}
		For any $(\eta,\varphi)\in\X_p$, the space $$H^p_{\eta,\varphi}:= \{\varphi\cdot f\circ\eta: f\in H^p\}$$ is a closed subspace of $H^p$ that is isometric to $H^p$; see Proposition \ref{4.2}. Moreover, we denote by $dm$ the normalized Lebesgue measure on $\T$, and by $\E_\varphi(\cdot |\eta)$ the conditional expectation operator on the weighted space $L^p(|\varphi|^pdm)$ with respect to the $\sigma$-algebra generated by $\eta$.
	\end{definition}

	With these notations, our main theorem reads as follows:
	
	\begin{theorem}\label{1.2}
		Let $1\leq p< \infty$, $p\neq 2$. Then $P: H^p\to H^p$ is a contractive projection of rank greater than one if and only if $$P=\varphi\E_{\varphi}(\cdot /\varphi| \eta)\big|_{H^p}$$ for some $(\eta,\varphi)\in \X_p$.
		Additionally, $M$ is a nontrivial 1-complemented subspace of $H^p$ if and only if $$M=H^p_{\eta,\varphi}$$ for some $(\eta,\varphi)\in \X_p$.
	\end{theorem}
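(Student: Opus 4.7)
The plan is to prove both directions by reducing the problem on $H^p$ to the classical Douglas--And\^o--Tzafriri theory on $L^p$, with the bridge being an extension theorem of independent interest.

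\smallskip

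\noindent\emph{Sufficiency.} Fix $(\eta,\varphi)\in\X_p$. By Proposition \ref{4.2}, $H^p_{\eta,\varphi}$ is closed in $H^p$ and isometric to $H^p$. Define $P(h):=\varphi\cdot\E_\varphi(h/\varphi\mid\eta)$. The map $h\mapsto h/\varphi$ is an isometric identification between $L^p(dm)$ and $L^p(|\varphi|^p dm)$, and $\E_\varphi(\cdot\mid\eta)$ is a contractive projection on the latter onto the functions of the form $f\circ\eta$, so $P$ is contractive with range contained in $\{\varphi\cdot f\circ\eta:f\in L^p\}$. Condition (3) of Definition \ref{1.1}---namely $\xi F^{p/2}\in H^2\ominus\eta H^2$---is exactly the analytic compatibility that forces $P$ to send $H^p$ into $H^p$ with range equal to $H^p_{\eta,\varphi}$; it is straightforward that $P$ fixes $H^p_{\eta,\varphi}$ pointwise, so $P$ is a contractive projection of rank greater than one.

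\smallskip

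\noindent\emph{Necessity.} Let $P$ be a contractive projection on $H^p$ of rank greater than one. The principal step---which I expect to be the hardest part of the proof---is the \emph{extension theorem}: $P$ admits a contractive projection $\tilde P$ on $L^p$ that leaves $H^p$ invariant and satisfies $\tilde P|_{H^p}=P$. This simultaneously resolves Wojtaszczyk's question and must exploit the analytic structure of $H^p$ (inner-outer factorization, duality between $H^p$ and $H^q$, the F.\ and M.\ Riesz theorem), since the Hahn--Banach theorem alone cannot produce such a structured extension. Granting $\tilde P$, the Douglas--And\^o--Tzafriri theorem gives
\[
\tilde P(h)=\varphi_0\cdot\E(h/\varphi_0\mid\mathcal{A})
\]
for some sub-$\sigma$-algebra $\mathcal{A}$ of the Borel $\sigma$-algebra on $\T$ and some $\varphi_0\in L^p$ with $\|\varphi_0\|_p=1$. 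The remaining work is to read off $(\eta,\varphi)\in\X_p$ from $(\mathcal{A},\varphi_0)$: invariance of $H^p$ under $\tilde P$ forces $\mathcal{A}$ to be the $\sigma$-algebra generated by an inner function $\eta$ (normalized so that $\eta(0)=0$ via a disc automorphism) and $\varphi_0$ to be an $H^p$ function $\varphi$ of unit norm; analyzing when $\varphi\cdot f\circ\eta$ is analytic for every $f\in H^p$ then yields condition (3) of $\X_p$.

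\smallskip

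\noindent\emph{$1$-complemented subspaces.} The subspace statement follows as a direct corollary. If $M$ is a nontrivial $1$-complemented subspace, choose any contractive projection $P$ onto $M$ (necessarily of rank greater than one), apply the projection characterization to obtain $(\eta,\varphi)\in\X_p$, and conclude $M=\Ran P=H^p_{\eta,\varphi}$; the converse is the sufficiency direction. The single conceptual bottleneck throughout is the extension theorem, which is where the specific structure of Hardy spaces must enter in an essential way.
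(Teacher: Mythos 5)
Your outline (sufficiency by direct verification; necessity via an extension to $L^p$, then Douglas--And\^o, then identification of $(\eta,\varphi)$) matches the paper's architecture superficially, but it defers the two genuinely hard steps without a strategy, and the order of deduction in the necessity part is backwards relative to what is actually feasible. You declare the extension of $P$ to a contractive projection $\widetilde P$ on $L^p$ to be the principal step and then simply grant it; the paper never proves this extension directly, and it is hard to see how one could, since the natural route to the extension goes through first knowing the structure of $P$. What the paper proves instead is a Douglas--And\^o-type theorem for contractive projections on an arbitrary closed subspace $X\subseteq L^p(\mu)$ containing the constants (Theorems \ref{1.6} and \ref{1.7}): after transferring $P$ to $P_\phi$ on $H^p_\phi$ so that $P_\phi\one=\one$, these theorems show $P_\phi$ is the restriction of a conditional expectation, and the $L^p$-extension (Corollary \ref{1.3}) falls out as a consequence rather than serving as an input. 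The subspace extension theorems are themselves the technical heart: for $p\notin 2\mathbb N$ they rest on a Fourier-analytic uniqueness lemma for the convolution kernel $|z|^{p-1}\sgn z$ (Lemma \ref{2.3}) together with a lemma upgrading $\E(f|g)=0$ for each $g$ in a linear space to $\E(f|{\bf \Sigma}(Y))=0$ (Lemma \ref{3.2}); for $p\in2\mathbb N$ a separate argument through the multiplier algebra is required, and the statement is false without extra hypotheses. None of this is visible in your plan, and invoking ``inner-outer factorization, duality, F.\ and M.\ Riesz'' does not by itself point toward any of it.

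The identification step is also substantially underestimated. Producing $\eta$ requires showing that the relevant $\sigma$-algebra is generated by an inner function vanishing at the origin; the paper does this by first proving that the conditional expectation leaves $H^2$ invariant (via a $w^*$-density argument for $H^\infty(\mathscr F)+\ol{H^\infty(\mathscr F)}$) and then invoking Aleksandrov's theorem. More seriously, your claim that invariance forces ``$\varphi_0$ to be an $H^p$ function $\varphi$ of unit norm'' and that ``analyzing when $\varphi\cdot f\circ\eta$ is analytic then yields condition (3)'' is wrong as stated: the representation $P=\phi\,\E_\phi(\cdot/\phi\,|\,\eta)$ holds for \emph{every} unit vector $\phi\in\Ran P$, yet a generic such $\phi$ does \emph{not} satisfy $\xi F^{p/2}\in H^2\ominus\eta H^2$ (already $\E(|\phi|^p|\eta)=\one$ fails generically). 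The paper's Step III constructs a specific $\varphi=\xi\cdot F$, where $F$ is the outer function with $|F|^p=|\phi|^p/\E(|\phi|^p|\eta)$ and $\xi$ is the greatest common divisor of the family of inner functions $\chi$ for which $\chi/\theta$ is $\eta$-measurable; verifying that this pair lies in $\X_p$ occupies most of a section. Your sufficiency direction likewise asserts the key point --- that condition (3) forces $\E_\varphi(q/\xi|\eta)$ to expand only in nonnegative powers of $\eta$ and hence to land in $H^\infty\circ\eta$ --- without proof, though that part is a routine computation once properly set up.
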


	\noindent In particular, if we set $$\widetilde{P}:=\varphi \E_\varphi(\cdot/\varphi |\eta),$$ then $\widetilde{P}$ is a contractive projection on $L^p$ satisfying $$\widetilde{P}\big|_{H^p}=P.$$ This answers the problem posed by Wojtaszczyk in \cite{Wo03}, as mentioned earlier. Actually, we will show that such an extension $\widetilde{P}$ is unique. 
	
	\begin{corollary}\label{1.3}
		Let $1 \leq p <\infty$, $p\neq 2$. For any contractive projection $P:H^p\to H^p$, there exists a unique contractive projection $\widetilde{P}$ on $L^p$ such that $P$ is the restriction of $\widetilde{P}$ on $H^p$.
	\end{corollary}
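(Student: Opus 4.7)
The plan is to derive existence as a direct consequence of Theorem~\ref{1.2} and to establish uniqueness by invoking the Douglas--And\^o--Tzafriri classification of contractive projections on $L^p$.

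\emph{Existence.} Theorem~\ref{1.2} furnishes $(\eta,\varphi)\in\X_p$ with $P = \varphi\,\E_\varphi(\cdot/\varphi|\eta)\big|_{H^p}$. Defining $\widetilde{P} := \varphi\,\E_\varphi(\cdot/\varphi|\eta)$ on all of $L^p$, the classical theorems of Douglas (for $p=1$) and And\^o--Tzafriri (for $1<p<\infty$, $p\neq 2$) identify weighted conditional expectations of this form as contractive projections on $L^p$, and by construction $\widetilde{P}|_{H^p}=P$.

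\emph{Uniqueness.} Let $\widetilde{Q}:L^p\to L^p$ be any contractive projection with $\widetilde{Q}|_{H^p}=P$. Applying Douglas--And\^o--Tzafriri to $\widetilde{Q}$, write $\widetilde{Q}=\psi\,\E_\psi(\cdot/\psi|\mathcal{B})$ in canonical form for some $\psi \in L^p$ and some sub-$\sigma$-algebra $\mathcal{B}$. Evaluating both representations at $\one\in H^p$ yields $\widetilde{Q}(\one)=P(\one)=\widetilde{P}(\one)$, which after absorbing the $\mathcal{B}$-measurable normalization factor into the conditional expectation allows us to take $\psi=\varphi$. For each $n\geq 0$, evaluating at $\eta^n\in H^p$ then forces $\eta$ to be $\mathcal{B}$-measurable on the support of $\varphi$, giving $\sigma(\eta)\subseteq\mathcal{B}$ there. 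The reverse inclusion comes from the constraint that $\widetilde{Q}(H^p)=M=H^p_{\eta,\varphi}$, since a strictly larger $\mathcal{B}$ would produce elements in the range not of the prescribed form $\varphi\cdot h\circ\eta$ with $h\in H^p$. Hence $\widetilde{Q}=\widetilde{P}$.

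\emph{Main obstacle.} The crucial step is recovering the full $\sigma$-algebra $\sigma(\eta)$ from agreement on the one-sided family $\{\eta^n\}_{n\geq 0}\subset H^p$, which contains no conjugates $\bar\eta^n$, so the standard symmetric density argument for generating $\sigma(\eta)$ is not directly available. The resolution leverages that $\eta$ is inner with $\eta(0)=0$, so composition with $\eta$ induces a measure-preserving map on $\T$ and the analytic polynomials in $\eta$ already suffice to determine $\sigma(\eta)$. The case $p=1$ is especially delicate since $L^1$ is not strictly convex and a contractive projection on $L^1$ is generally not determined by its range alone; the rigidity provided by the factorization condition $\xi\cdot F^{p/2}\in H^2\ominus\eta H^2$ built into the definition of $\X_p$ is what ultimately forces uniqueness of the pair $(\psi,\mathcal{B})$.
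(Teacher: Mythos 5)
Your existence argument is fine and is essentially the paper's (the paper builds $\widetilde{P}=M_\varphi\,\E_\varphi(\cdot\,|\,{\bf \Sigma}^P)\,M_{1/\varphi}$ directly from Theorem \ref{4.6}). The uniqueness argument, however, has a genuine gap at the reverse inclusion $\mathcal{B}\subseteq\sigma(\eta)$. After transferring by $M_{1/\varphi}$ you correctly obtain $\widetilde{Q}=\varphi\,\E_\varphi(\cdot/\varphi\,|\,\mathcal{B})$ and $\sigma(\eta)\subseteq\mathcal{B}$ from $\varphi\eta^n\in\Ran P\subseteq\Ran\widetilde{Q}$. But your justification for the reverse inclusion --- ``a strictly larger $\mathcal{B}$ would produce elements in the range not of the prescribed form'' --- conflates $\widetilde{Q}(L^p)$ with $\widetilde{Q}(H^p)$. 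The hypothesis only controls $\widetilde{Q}$ on $H^p$; the full range $\widetilde{Q}(L^p)=\varphi\cdot L^p(\mathcal{B},|\varphi|^p dm)$ is always strictly larger than $H^p_{\eta,\varphi}$ (it contains $\varphi\ol{\eta}$, for instance), so the fact that $\widetilde{Q}(H^p)=H^p_{\eta,\varphi}$ does not by itself exclude $\mathcal{B}\supsetneq\sigma(\eta)$. Your ``main obstacle'' paragraph also locates the difficulty in the wrong place: deducing $\sigma(\eta)\subseteq\mathcal{B}$ from the one-sided family $\{\eta^n\}_{n\geq 0}$ is immediate (already $\eta\in\Ran\widetilde{Q}/\varphi$ generates $\sigma(\eta)$), and neither the measure-preserving property of $\eta$ nor the condition $\xi F^{p/2}\in H^2\ominus\eta H^2$ is what closes the argument.

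The missing idea --- and the one the paper uses --- is positivity. Both $M_{1/\varphi}\widetilde{P}M_\varphi$ and $M_{1/\varphi}\widetilde{Q}M_\varphi$ are contractive projections on $L^p(\T,|\varphi|^p dm)$ fixing $\one$, hence by Douglas--And\^o (or by Theorems \ref{3.1} and \ref{3.3} with $X=L^p(|\varphi|^p dm)$) they are conditional expectations, hence positive, hence commute with complex conjugation. They agree on $H^p_\varphi$, which contains all analytic polynomials, so by conjugation they agree on all trigonometric polynomials, which are dense in $L^p(\T,|\varphi|^p dm)$; therefore $\widetilde{Q}=\widetilde{P}$. (Equivalently, this argument shows $\E_\varphi(\one_B\,|\,\mathcal{B})=\one_B$ is $\eta$-measurable for every $B\in\mathcal{B}$, which is the reverse inclusion you wanted.) With this step inserted your proof goes through; note also that the paper derives the corollary from the weaker Theorem \ref{4.6} alone, so uniqueness does not actually require the identification of the $\sigma$-algebra with one generated by an inner function.
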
 
	
	Moreover, since all spaces $H^p_{\eta, \varphi}$ are isometric to $H^p$, Theorem \ref{1.2} also implies the following corollary.
	
	\begin{corollary}\label{1.4}
		Let $1\leq p< \infty$, $p\neq 2$. Every nontrivial 1-complemented subspace of $H^p$ is isometric to $H^p$.
	\end{corollary}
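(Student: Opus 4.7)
The plan is to combine Theorem~\ref{1.2} with Proposition~\ref{4.2} (the isometric identification already referenced inside Definition~\ref{1.1}). Given any nontrivial 1-complemented subspace $M\subseteq H^p$, Theorem~\ref{1.2} produces a pair $(\eta,\varphi)\in\X_p$ with $M=H^p_{\eta,\varphi}$, and Proposition~\ref{4.2} identifies $H^p_{\eta,\varphi}$ isometrically with $H^p$. On paper the corollary therefore collapses to a single invocation of those two results.

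The only substantive content is the isometry asserted by Proposition~\ref{4.2}. The natural candidate is the map $T\colon H^p\to H^p_{\eta,\varphi}$, $Tf=\varphi\cdot(f\circ\eta)$, which is surjective by construction of the target. To see $\|Tf\|_p=\|f\|_p$ I would set $h:=\xi F^{p/2}$; then $|h|^2=|\varphi|^p$, and condition~(3) of Definition~\ref{1.1} places $h\in H^2\ominus\eta H^2$. Consequently, the Fourier coefficients of $|\varphi|^p\,dm$ relative to $\eta$ read
$$\int_\T |\varphi|^p\,\eta^k\, dm=\langle \eta^k h,h\rangle,$$
which vanish for $k\geq 1$ since $\eta^k h\in\eta H^2\perp h$ (and for $k\leq -1$ by conjugation), while reducing at $k=0$ to $\|h\|_2^2=\|\varphi\|_p^p=1$. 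By uniqueness of Fourier coefficients, the push-forward of $|\varphi|^p\,dm$ under $\eta$ agrees with $dm$, and the change-of-variable identity
$$\|Tf\|_p^p=\int_\T |\varphi|^p|f\circ\eta|^p\,dm=\int_\T|f|^p\,dm=\|f\|_p^p$$
is immediate.

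No genuine obstacle remains at this stage: Corollary~\ref{1.4} is essentially a one-line consequence of Theorem~\ref{1.2} and Proposition~\ref{4.2}. The real difficulty of the paper lies upstream, in producing the explicit $(\eta,\varphi)\in\X_p$ representation of $M$ guaranteed by Theorem~\ref{1.2}; once that structure is in hand, the isometry with $H^p$ is built into the very definition of $H^p_{\eta,\varphi}$.
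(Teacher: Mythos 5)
Your proposal is correct and follows essentially the same route as the paper: Corollary \ref{1.4} is obtained there by combining Theorem \ref{1.2} with Proposition \ref{4.2}, whose proof rests on exactly the moment computation you give, namely $\int_\T \eta^k|\varphi|^p\,dm=\langle \eta^k \xi F^{p/2},\xi F^{p/2}\rangle=\delta_{k,0}$, so that $\eta$ pushes $|\varphi|^p\,dm$ forward to $dm$. The only cosmetic difference is that the paper runs the change-of-variable step first on analytic polynomials (approximating $|q|^p$ uniformly by trigonometric polynomials) and then extends $T_{\eta,\varphi}$ by density, rather than citing the push-forward identity for general $f\in H^p$ directly.
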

	
	\noindent This corollary is reminiscent of a celebrated theorem of A. Pe{\l}czy\'nski \cite{Pel60}, which states that the sequence spaces $\ell^p$ $(1\leq p<\infty)$ are prime (an infinite-dimensional Banach space $X$ is called {\it prime} if every nontrivial complemented subspace of $X$ is isomorphic to $X$). We note that Corollary \ref{1.4} directly implies the theorem of Lancien, Randrianantoanina, and Ricard \cite[Theorem 3.1]{LRR}. Theorem \ref{1.2} also leads to the following codimensional version of \cite[Theorem 3.1]{LRR}.
	
	
	\begin{corollary}\label{1.5}
		Let $1\leq p< \infty$, $p\neq 2$. Then $H^p$ contains no proper $1$-complemented subspace of finite codimension.
	\end{corollary}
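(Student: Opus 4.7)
My plan is to combine Theorem \ref{1.2} with the observation that $\Ker P$ is invariant under multiplication by $\eta$; since $\eta$ is necessarily a non-constant inner function, this invariance will force $\Ker P$ to be either $\{0\}$ or infinite-dimensional, thereby ruling out any proper $1$-complemented subspace of finite codimension.

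I would start by assuming, for contradiction, that $M$ is a proper $1$-complemented subspace of $H^p$ of finite codimension $n \geq 1$. Since $H^p$ is infinite-dimensional, $M$ is itself infinite-dimensional and hence nontrivial; by Theorem \ref{1.2} it has the form $M = H^p_{\eta,\varphi}$ for some $(\eta,\varphi)\in \X_p$, with associated contractive projection $P = \varphi\E_\varphi(\cdot/\varphi|\eta)\big|_{H^p}$, and the decomposition $H^p = M \oplus \Ker P$ gives $\dim \Ker P = n$.

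The heart of the argument is to check that $\Ker P$ is invariant under multiplication by $\eta$. Since $\eta \in H^\infty$, the product $\eta f$ lies in $H^p$ for any $f \in \Ker P$, and since $\eta$ is measurable with respect to $\sigma(\eta)$, the standard pull-out property of conditional expectation gives
\[
P(\eta f) = \varphi \,\E_\varphi(\eta (f/\varphi) | \eta) = \eta \cdot \varphi \,\E_\varphi(f/\varphi | \eta) = \eta \cdot Pf = 0,
\]
so $\eta f \in \Ker P$, and by iteration $\eta^k f \in \Ker P$ for every $k \geq 0$.

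If $\Ker P$ contained a nonzero element $f_0$, the orbit $\{\eta^k f_0\}_{k\geq 0} \subset \Ker P$ would be linearly independent in $H^p$: any relation $\sum c_k \eta^k f_0 = 0$ forces $\sum c_k \eta^k = 0$ a.e.\ on $\T$ (dividing by $f_0$, which is nonzero a.e.\ because nonzero $H^p$ functions have log-integrable moduli), and then $\eta$ being non-constant (it cannot be a unimodular constant since $\eta(0)=0$) makes the polynomial $\sum c_k w^k$ vanish on the infinite image $\eta(\D)$, so every $c_k = 0$. This contradicts $\dim \Ker P = n < \infty$, forcing $\Ker P = \{0\}$, so $P$ is the identity on $H^p$ and $M = H^p$, contradicting properness. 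The one point requiring verification is the pull-out identity $\E_\varphi(\eta g | \eta) = \eta\,\E_\varphi(g | \eta)$, which is the standard commutation of $\sigma(\eta)$-measurable factors with the weighted conditional expectation.
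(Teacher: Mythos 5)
Your argument is correct, but it takes a genuinely different route from the paper's. The paper derives Corollary \ref{1.5} in the Appendix from Theorem \ref{5.1}, which deliberately avoids the full strength of Theorem \ref{1.2}: it only uses the weak form (Theorem \ref{4.6} / the extension theorems) to realize $P$ as a transferred conditional expectation $\E_\varphi(\cdot\,|\,{\bf\Sigma}^P_\varphi)$, then shows that finite codimension of $\Ran P$ forces $L^p(\Omega,{\bf\Sigma}^P_\varphi,|\varphi|^p d\mu)$ to have finite codimension in $L^p(\Omega,{\bf\Sigma},|\varphi|^p d\mu)$, which for an atomless measure space forces ${\bf\Sigma}^P_\varphi={\bf\Sigma}$ and hence $P=I$. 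You instead invoke the full Theorem \ref{1.2} --- whose proof requires the identification of the generating function as an inner $\eta$ with $\eta(0)=0$ via Aleksandrov's theorem --- and then exploit an $H^p$-specific structure: the pull-out property makes $\Ker P$ invariant under multiplication by $\eta$, and since $\eta$ is a non-constant inner function and nonzero $H^p$ functions are nonzero a.e., the orbit $\{\eta^k f_0\}_{k\ge 0}$ of any nonzero $f_0\in\Ker P$ is linearly independent (an $H^\infty$ function vanishing a.e.\ on $\T$ vanishes on $\D$, and a polynomial vanishing on the infinite set $\eta(\D)$ is zero), so $\Ker P$ is either $\{0\}$ or infinite-dimensional. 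Both proofs are sound; the paper's buys generality (it applies to a whole class of subspaces of $L^p(\mu)$ over atomless spaces and needs only the extension theorems, which is the point the authors emphasize in the introduction), while yours is shorter and more concrete once Theorem \ref{1.2} is available, being the natural ``invariance under a non-constant multiplier forces infinite dimension'' argument. All the steps you flag --- the pull-out identity $\E_\varphi(\eta g\,|\,\eta)=\eta\,\E_\varphi(g\,|\,\eta)$ for the bounded $\sigma(\eta)$-measurable factor $\eta$, and the non-vanishing a.e.\ of $f_0$ --- are standard and hold as you state them.
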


	The sufficiency part of Theorem \ref{1.2} is relatively easy and will be presented at the beginning of Section 4. The necessity part is more involved and will be organized into three steps, corresponding to three subsections of Section 4. 
	
	To prove the necessity, our starting point is to establish the following two extension theorems for contractive projections on subspaces of $L^p(\mu)=L^p(\Omega, {\bf \Sigma}, \mu)$, where $(\Omega, {\bf \Sigma}, \mu)$ is a complete probability space. These extension theorems are of independent interest.

	\begin{theorem}\label{1.6}
		Suppose that $1\leq p<\infty$, $p\notin 2\mathbb N$, and that $X$ is a closed subspace of $L^p(\mu)$ containing the constant functions. Let $P$ be a contractive projection on $X$ satisfying $P\one =\one$, then $$Pf=\E(f| {\bf\Sigma'}), \quad \forall f\in X,$$ where ${\bf \Sigma'}$ is the sub $\sigma$-algebra generated by functions in $\Ran P$.
	\end{theorem}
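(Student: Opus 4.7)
The strategy is to adapt the classical Douglas--And\^o argument for $L^p(\mu)$ to the subspace setting. To prove $Pf = \E(f|{\bf \Sigma'})$ it suffices, since $Pf$ lies in $\Ran P$ and is therefore ${\bf \Sigma'}$-measurable, to establish the orthogonality
\[
\int \phi\,(f - Pf)\,d\mu = 0 \qquad \text{for every bounded } {\bf \Sigma'}\text{-measurable } \phi.
\]
By a Doob--Dynkin/monotone class reduction, using that ${\bf \Sigma'}$ is generated by $\Ran P$, it is enough to verify this for $\phi = \Phi(g_1,\ldots,g_n)$ with $g_i \in \Ran P$ bounded and $\Phi$ bounded continuous on $\C^n$.

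The core estimate is a first-order Taylor expansion arising from the contractivity of $P$. Fix $g \in \Ran P$ and $f \in X$; since $Pg = g$, linearity gives $P(g + tf) = g + tPf$ for every $t \in \C$, and hence $\|g + tf\|_p \geq \|g + tPf\|_p$. Expanding
\[
|g+tf|^p \;=\; \bigl(|g|^2 + 2\,\mathrm{Re}(t\bar g f) + |t|^2|f|^2\bigr)^{p/2}
\]
to first order in $t$ and integrating yields
\[
\|g + tf\|_p^p \;=\; \|g\|_p^p + p\,\mathrm{Re}\!\Big(t\int |g|^{p-2}\bar g\, f\, d\mu\Big) + O(|t|^2),
\]
with the analogous identity for $Pf$. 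Because the contraction inequality must hold for every complex $t$ near $0$, the linear Taylor coefficients (in both $t$ and $\bar t$) must match, yielding the key identity
\[
\int |g|^{p-2}\,\bar g\,(f - Pf)\,d\mu \;=\; 0 \qquad \forall\, g \in \Ran P.
\]

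To produce a large enough family of test functions one exploits the complex-linear structure of $\Ran P$. Since $\one \in \Ran P$, for any $g_1,\ldots,g_n \in \Ran P$ and any $a_1,\ldots,a_n, b \in \C$ the combination $h = \sum_i a_i g_i + b\,\one$ again lies in $\Ran P$, and the key identity gives
\[
\int \bigl|{\textstyle\sum_i a_i g_i + b}\bigr|^{p-2}\,\overline{\bigl({\textstyle\sum_i a_i g_i + b}\bigr)}\,(f-Pf)\,d\mu \;=\; 0
\]
for every $(a_1,\ldots,a_n,b) \in \C^{n+1}$. Writing $\tilde\nu$ for the push-forward of $(f-Pf)\,d\mu$ under the map $z \mapsto (g_1(z),\ldots,g_n(z))$, this amounts to the vanishing of $\int K(a\cdot t + b)\,d\tilde\nu(t)$ for all $a \in \C^n$, $b \in \C$, where $K(w) := |w|^{p-2}\bar w$. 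The remaining task is to deduce $\tilde\nu = 0$.

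This density/uniqueness step is the main obstacle, and it is where the hypothesis $p \notin 2\mathbb{N}$ enters decisively. When $p$ is an even integer, $K(w) = w^{p/2-1}\bar w^{p/2}$ is a polynomial in $(w,\bar w)$, so the test family degenerates to polynomials in $(g_i,\bar g_i)$, which is strictly smaller than $L^\infty({\bf \Sigma'})$. For $p \notin 2\mathbb{N}$ the kernel $K$ is genuinely non-polynomial with non-vanishing Fourier transform, so the vanishing of $\int K(a\cdot t + b)\,d\tilde\nu(t)$ for all $(a,b)$ is a ridge-function / Radon-transform-type condition that forces $\tilde\nu = 0$. Once this density step is carried out, the orthogonality of $f - Pf$ against all of $L^\infty({\bf \Sigma'})$ follows, and hence $Pf = \E(f|{\bf \Sigma'})$.
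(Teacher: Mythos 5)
Your overall architecture (orthogonality relation from contractivity, push-forward measures, a Fourier-type uniqueness argument, with $p\notin 2\mathbb N$ entering because $|w|^{p-2}\bar w$ is then non-polynomial) matches the paper's, but the proposal stops exactly at the two points where the real work lies. The final step --- ``the vanishing of $\int K(a\cdot t+b)\,d\tilde\nu(t)$ for all $(a,b)$ is a ridge-function / Radon-transform-type condition that forces $\tilde\nu=0$'' --- is asserted, not proved, and it is the heart of the theorem. To carry it out you need (i) a one-variable uniqueness lemma: if $|z|^{p-1}\sgn z * \sigma = 0$ for a finite measure $\sigma$ on $\C$, then $\sigma=0$; this rests on the nontrivial fact (Gelfand--Shilov) that the distributional Fourier transform of $|z|^{p-1}\sgn z$ is $\gamma_p|z|^{-p-1}\sgn z$, which vanishes nowhere on $\C\setminus\{0\}$ precisely when $p\notin 2\mathbb N$, and the convolution must be regularized (the paper mollifies with Schwartz functions) before taking Fourier transforms; and (ii) a passage from ``$\E(f-Pf\,|\,g)=0$ for each single $g\in\Ran P$'' to ``$\E(f-Pf\,|\,{\bf\Sigma}')=0$,'' which is not automatic since ${\bf\Sigma}'$ is strictly larger than the union of the individual $\sigma$-algebras ${\bf\Sigma}(g)$. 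The paper isolates this as a separate lemma: using that $\Ran P$ is a linear space, the characters $\exp(-2\pi i\sum_j\alpha_j g_j)$ are measurable with respect to the single function $\sum_j\alpha_j g_j$, so the Fourier transform of the push-forward of $(f-Pf)\,d\mu$ under $(g_1,\dots,g_n)$ vanishes; one then needs a countable-exhaustion plus martingale convergence argument to reach the full ${\bf\Sigma}'$. Your monotone-class reduction to $\Phi(g_1,\dots,g_n)$ with \emph{bounded} $g_i\in\Ran P$ is also not quite right as stated, since $\Ran P$ need not contain enough bounded functions to generate ${\bf\Sigma}'$; only $\Phi$ needs to be bounded.

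A second concrete gap is the case $p=1$, which your Taylor expansion does not cover: at points where $g=0$ the map $t\mapsto|g+tf|$ is not differentiable at $t=0$, and the correct first-order condition (the paper's Lemma \ref{2.2}) carries an error term $\int_{Z_g}|f-Pf|\,d\mu$ supported on the zero set of $g$. The paper disposes of this by replacing $g$ with $w-g$ (legitimate since $\one\in\Ran P$) and observing that $\mu(Z_{w-g})=0$ for all but countably many $w\in\C$, which is enough to conclude that the push-forward measure annihilates $\sgn(w-z)$ almost everywhere in $w$. Without some such device your key identity simply fails to hold for $p=1$ whenever $g$ vanishes on a set of positive measure.
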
 
	
	For $p=2k+2$ with $k\in \mathbb N$, Theorem \ref{1.6} fails in general, but remains true if the multiplier algebra of $X$ enjoys certain additional conditions. For a closed subspace $X$ of $L^p(\mu)$ containing the constants, we define the multiplier algebra of $X$ by $$\cal M(X):= \{h\in X: hf\in X, \forall f\in X\}.$$ 
	It is routine to verify that $\cal M(X)$ is a closed subalgebra of $L^\infty(\mu)$.

	\begin{theorem}\label{1.7}
		Suppose $p=2k+2$ with $k\geq 1$. Let $X$ be a closed subspace of $L^p(\mu)$ containing the constants. Suppose that $\cal M(X)$ satisfies one of the following conditions:
		\begin{enumerate}
			\item The multiplier algebra $\cal M(X)$ is dense in $X$.
			\item The intersection of $\cal M(X)+\ol{\cal M(X)}$ and the closed unit ball of $L^\infty(\mu)$ is $w^*$-dense in the closed unit ball of $L^\infty(\mu)$.
		\end{enumerate}
		Then any contractive projection $P$ on $X$ satisfying $P\one =\one$ is of the form $$Pf=\E(f| {\bf\Sigma'}), \quad \forall f\in X,$$ where ${\bf \Sigma'}$ is the sub $\sigma$-algebra generated by functions in $\Ran P$.
	\end{theorem}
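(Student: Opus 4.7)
The strategy parallels the proof of Theorem~\ref{1.6}: for a real-valued multiplier $h\in\cal M(X)$ of small sup-norm and $f\in X$, the product $(1+th)f$ lies in $X$ for every small real $t$ (since $1+th\in\cal M(X)$), so contractivity gives $\|P((1+th)f)\|_p\le\|(1+th)f\|_p$; together with $P\one=\one$, expanding both sides as polynomials in $t$ produces a family of coefficient identities. When $p\notin 2\mathbb N$ the expansion of $(1+th)^p$ is a genuine power series, furnishing infinitely many identities; this is what allows Theorem~\ref{1.6} to characterize $P$ on an arbitrary closed subspace containing the constants. When $p=2k+2$ the expansion truncates at degree $p$, and the finitely many recovered identities must be supplemented by the extra test functions supplied by hypotheses (1) or (2).

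The central step is to establish the \emph{averaging identity}
\[
P(hf)=h\cdot Pf\qquad\text{for }h\in\cal M(X)\text{ with }Ph=h\text{ and }f\in X.
\]
Granted this, the standard Douglas--And\^o scheme applies: $\Ran P\cap\cal M(X)$ becomes a self-adjoint subalgebra of $L^\infty(\mu)$, and if ${\bf \Sigma'}$ denotes the $\sigma$-algebra generated by $\Ran P$, then $P$ agrees with $\E(\cdot|{\bf \Sigma'})$ on elements of $\cal M(X)$; what remains is to extend this identification to all of $X$.

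Under hypothesis (1), density of $\cal M(X)$ in $X$ allows the extension by continuity, since both $P$ and $\E(\cdot|{\bf \Sigma'})$ are contractive on $X$ and thus agree on its closure. Under hypothesis (2), I would instead exploit the $w^*$-density of the unit ball of $\cal M(X)+\overline{\cal M(X)}$ in the unit ball of $L^\infty(\mu)$ to approximate bounded ${\bf \Sigma'}$-measurable functions---in particular characteristic functions of sets in ${\bf \Sigma'}$---well enough to reconstruct $\E(\cdot|{\bf \Sigma'})$ from the already-known action of $P$ on $\cal M(X)$ and identify the two operators.

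The hard part will be the averaging identity for $p=2k+2$: only finitely many coefficient identities are produced by a single perturbation, and it is not immediate that they suffice. I anticipate needing to iterate the perturbation with products $\prod_{i}(1+t_ih_i)$ of several independent multipliers and take mixed partial derivatives at $t_i=0$, or to combine several real-variable identities through a polarization-type argument, in order to simulate the infinitely many higher-order relations that are freely available in the non-even case.
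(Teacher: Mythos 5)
There is a genuine gap: the averaging identity, which you correctly identify as the central step, is exactly what you leave unproven, and the mechanism you propose for generating coefficient identities is not the one that works. Perturbing the \emph{argument} via $\|P((1+th)f)\|_p\le\|(1+th)f\|_p$ gives nothing to differentiate at $t=0$, because the two sides are not equal there ($\|Pf\|_p<\|f\|_p$ in general); contractivity must instead be exploited as Birkhoff--James orthogonality of $\Ran P$ to $\Ker P$, i.e., by perturbing a fixed element of the \emph{range}. The paper applies Lemma \ref{2.1} to $g=\sum_{j=1}^{2k+1}w_jg_j$ with $g_j\in\Ran P$, observes that $|g|^{2k}g$ is a polynomial in the variables $w_j,\ol{w_j}$, and extracts the coefficient of $w_1\cdots w_{k+1}\ol{w_{k+2}}\cdots\ol{w_{2k+1}}$; since $\one\in\Ran P$ and $k\ge1$, this yields $\int_\Omega g_1g_2\ol{g_3}\,\ol{(f-Pf)}\,d\mu=0$ and $\int_\Omega g\,\ol{(f-Pf)}\,d\mu=0$. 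The averaging identity $P(hPf)=Ph\cdot Pf$ then follows from these two relations alone by a short $L^2$ computation (equality of inner products against all of $X$, then $\|P(hPf)\|_2^2=\|Ph\,Pf\|_2^2$). No iterated perturbations or polarization are needed: the finitely many surviving coefficients for even $p$ do suffice, but only when harvested this way. Note also that $\Ran P\cap\M(X)$ is not self-adjoint in general (take $X=H^p$, $\M(X)=H^\infty$), so the ``standard Douglas--And\^o scheme'' cannot be invoked in the form you state.

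Your treatment of the two hypotheses also diverges from what actually closes the argument. Under (1), ``extension by continuity'' presupposes that $P$ already agrees with $\E(\cdot|{\bf \Sigma}')$ on $\M(X)$, and that is the substantive point: the paper first proves $\|Ph\|_\infty\le\|h\|_\infty$ by the power trick $(Ph)^n=P\big(h(Ph)^{n-1}\big)$, then uses $\int_\Omega(Ph)^n\ol{(Ph)^m}(f-Pf)\,d\mu=0$ together with the density of the span of $\{(Ph)^n\ol{(Ph)^m}\}$ in $L^1(\Omega,{\bf \Sigma}(Ph),\mu)$ to obtain $\E(f-Pf|Ph)=0$, and finally needs Lemma \ref{3.2} to pass from the single-function $\sigma$-algebras to the jointly generated one --- a step your outline omits entirely. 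Under (2), the paper does not approximate characteristic functions of ${\bf \Sigma}'$-sets; it extends $P$ to a positive contraction on $\M(X)+\ol{\M(X)}$ via Paulsen's results on positive maps, uses duality against the $w^*$-dense unit ball to prove $\|Pf\|_1\le\|f\|_1$, and thereby reduces the whole theorem to the already-established case $p=1$ of Theorem \ref{1.6}. Without these concrete mechanisms your proposal remains a plan rather than a proof.
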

	
	The proofs of the extension theorems will be presented in Section 3, while some necessary preliminaries are introduced in Section 2. We point out that the proofs of the extension theorems do not rely on the results of Douglas and And\^o, and in fact provide a new proof of their theorems by taking $X=L^p(\mu)$. Moreover, from the extension theorems, we can easily derive a weak version of Theorem \ref{1.2} (see Theorem \ref{4.6}) without much use of properties specific to the Hardy spaces $H^p$. Interestingly, this weaker form is sufficient to answer Wojtaszczyk's question (Corollary \ref{1.3}), and to cover the Lancien-Randrianantoanina-Ricard theorem along with its codimensional version (Corollary \ref{1.5}); see Appendix.

	\vspace{0.3cm}
	
	\noindent{\bf Notation.} The letter $\mathbb{N}$ denotes the set of all positive integers, and we define $\mathbb{N}_0 := \mathbb{N} \cup \{0\}$. Throughout this paper, $(\Omega, \mathbf{\Sigma}, \mu)$ represents a complete probability space. For a family $\mathcal{G}$ of measurable functions on $(\Omega, \mathbf{\Sigma}, \mu)$, the $\sigma$-algebra generated by $\mathcal{G}$, denoted $\mathbf{\Sigma}(\mathcal{G})$, refers to the smallest complete sub $\sigma$-algebra such that every function in $\mathcal{G}$ is measurable. For a $\mathbf{\Sigma}$-measurable function $g$, we denote by $\mathbb{E}(\cdot|g)$ the conditional expectation with respect to the $\sigma$-algebra generated by the single function $g$.

	\section{Preliminaries}

	\subsection{Conditional Expectations} 
	The conditional expectation, introduced by A. Kolmogorov in 1933, is a fundamental concept in probability theory. Let $(\Omega,{\bf \Sigma}, \mu )$ be a complete probability space, and let ${\bf \Sigma}'$ be a complete sub $\sigma$-algebra of ${\bf \Sigma}$. For $f\in L^1(\Omega, {\bf \Sigma}, \mu)$, the conditional expectation of $f$ with respect to the sub $\sigma$-algebra ${\bf \Sigma}'$, denoted by $\E(f| {\bf \Sigma}')$, is the unique function in $L^1(\Omega, {\bf \Sigma}', \mu)$ satisfying 
	\begin{align*}
		\int_{F} \E(f| {\bf \Sigma}') d\mu = \int_{F} f d\mu, \quad  \forall F\in {\bf \Sigma}'.
	\end{align*}
	The existence of $\E(f|{\bf \Sigma}')$ is guaranteed by the Radon-Nikodym theorem. 
	Some basic properties of conditional expectations are as follows: 
	\begin{enumerate}
		\item 
		$\E(\cdot|{\bf \Sigma}')$ is a contractive projection from $L^p(\Omega, {\bf \Sigma}, \mu)$ onto $L^p(\Omega, {\bf \Sigma}', \mu)$ for $1\leq p\leq \infty$. When $p=2$, $\E(\cdot| {\bf \Sigma}')$ is the orthogonal projection.
		\item $\E(\cdot|{\bf \Sigma}')$ is positive, i.e. if $f\geq 0$, then $\E(f|{\bf \Sigma}') \geq 0$.
		\item $\E(\cdot|{\bf \Sigma}')$ satisfies the {\it averaging identity}: 
		$$\E\bigg(f \cdot \E(g| {\bf \Sigma}') \bigg|{\bf \Sigma}'\bigg)= \E(f|{\bf \Sigma}') \cdot \E(g| {\bf \Sigma}').$$
	\end{enumerate}
	We refer to \cite{Du} for further properties and applications of conditional expectations in probability theory, and to \cite{Ran05, DHP90} for comprehensive operator-theoretic characterizations of conditional expectation-type operators.

	\subsection{Orthogonality in $L^p$}
	Let $X$ be a Banach space, and let $Y$ be a closed subspace of $X$. An element $g \in X$ is said to be Birkhoff-James orthogonal to $Y$ if  
	$$
	\|g\| \leq \|f + g\|, \quad \forall f \in Y.
	$$
	This notion of orthogonality is closely related to contractive projections. It is straightforward to observe that a projection $P: X \to X$ is contractive if and only if every $g \in \Ran P$ is Birkhoff-James orthogonal to ${\rm Ker}\, P$.  
	
	We need certain criteria for orthogonality in $L^p$-spaces; see \cite[Threom 4.2.1, Theorem 4.2.2, and Exercise 4.2.3]{Sha}. Below, we reformulate these criteria in terms of contractive projections.

	\begin{lemma}\label{2.1}
		Suppose that $1< p<\infty$, and that $X$ is a closed subspace of $L^p(\mu)$. Let $P$ be a bounded projection on $X$, then $P$ is of contractive if and only if 
		\begin{align*}
			\int_{\Omega} |g|^{p-1}\sgn g \cdot \ol{(f-Pf)} d\mu =0, \quad \forall g\in \Ran P, \quad \forall f\in X.
		\end{align*}
	\end{lemma}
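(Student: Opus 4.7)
The approach is to reduce the lemma to the classical characterization of Birkhoff--James orthogonality in $L^p$. As noted in the paragraph preceding the lemma, $P$ is contractive if and only if every $g\in\Ran P$ is Birkhoff--James orthogonal to $\Ker P$. Since $\{f-Pf:f\in X\}=\Ker P$ (any $h\in\Ker P$ equals $h-Ph$), the stated integral identity is just the assertion that the linear functional $\Phi_g: h\mapsto \int_\Omega |g|^{p-1}\sgn g\cdot\ol h\,d\mu$ annihilates $\Ker P$ for every $g\in\Ran P$. Therefore the lemma will follow once I establish the pointwise claim: for $g,h\in L^p(\mu)$ and $1<p<\infty$,
\[
\|g+\lambda h\|_p\geq \|g\|_p\ \text{for all}\ \lambda\in\C \iff \Phi_g(h)=0.
\]

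To prove this pointwise equivalence, I would study the real function $\phi(t)=\|g+th\|_p^p$ for $t\in\R$. Minkowski's inequality, composed with the convexity of $s\mapsto s^p$, makes $\phi$ convex, so $\phi(t)\geq\phi(0)$ for all $t\in\R$ is equivalent to $\phi'(0)=0$. Differentiating $|g+th|^p$ pointwise at $t=0$ yields $p|g|^{p-2}\operatorname{Re}(\ol g\,h)$ where $g\neq 0$ and $0$ where $g=0$ (using $p>1$). To move the derivative inside the integral, I would bound the difference quotient by $p(|g|+|h|)^{p-1}|h|$ for $|t|\leq 1$ via the mean value theorem; this sits in $L^1(\mu)$ by H\"older's inequality since $(p-1)\cdot\tfrac{p}{p-1}=p$. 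Dominated convergence then gives $\phi'(0)=p\operatorname{Re}\Phi_g(h)$. Running the same computation with $ih$ in place of $h$ accounts for the imaginary part, and scaling in $\lambda$ handles all complex directions, yielding the claimed equivalence.

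The main obstacle, though essentially technical, is the differentiation under the integral in the range $1<p<2$, where $s\mapsto s^{p/2}$ has unbounded derivative near $0$ and the integrand fails to be smooth where $g+th$ vanishes; the uniform domination above is what resolves this. With the pointwise claim in hand, the lemma is immediate: the integral identity in the statement holds for all $g\in\Ran P$ and $f\in X$ iff $\Phi_g$ vanishes on $\Ker P$ for every $g\in\Ran P$, iff every $g\in\Ran P$ is Birkhoff--James orthogonal to $\Ker P$, iff $P$ is contractive.
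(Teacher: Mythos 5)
Your proposal is correct: the reduction of contractivity to Birkhoff--James orthogonality of $\Ran P$ against $\Ker P=\{f-Pf:f\in X\}$, followed by the Gateaux-differentiation of $t\mapsto\|g+th\|_p^p$ with the domination $p(|g|+|h|)^{p-1}|h|\in L^1(\mu)$, is exactly the standard argument behind the criteria the paper invokes. The paper itself gives no proof of Lemma \ref{2.1} but simply cites Shapiro's book for these orthogonality criteria, so your argument is in substance the same proof, written out.
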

	
	\noindent Here, $\sgn : \mathbb C \to \T \cup \{0\}$ is the signum function defined as $$\sgn z =
	\begin{cases}
		z/ |z|, & z\neq 0;\\
		0, & z=0.
	\end{cases}$$ 
	\begin{lemma}\label{2.2}
		Suppose that $X$ is a closed subspace of $L^1(\mu)$. Let $P$ be a bounded projection on $X$, then $P$ is of contractive if and only if 
		\begin{align}\label{eq2.1}
			\bigg|\int_{\Omega} \sgn g \cdot \ol{(f-Pf)} d\mu\bigg| \leq \int_{Z_g}|f-Pf| d\mu, \quad \forall g\in \Ran P, \quad \forall f\in X,
		\end{align}
		where $Z_g=\{x\in \Omega: g(x)=0 \}$. In particular, if $\mu(Z_g)=0$, then \eqref{eq2.1} results in $$\int_{\Omega} \sgn g \cdot \ol{(f-Pf)} d\mu=0, \quad \forall f\in X.$$
	\end{lemma}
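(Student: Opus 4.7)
The plan is to reduce the lemma to the standard characterization of Birkhoff-James orthogonality in $L^1(\mu)$. As recalled just before Lemma \ref{2.1}, a bounded projection $P$ on $X$ is contractive if and only if every $g\in\Ran P$ is Birkhoff-James orthogonal to $\Ker P$. Since every element of $\Ker P$ has the form $f-Pf$ for some $f\in X$, it is enough to show that for fixed $g,h\in L^1(\mu)$,
$$\|g\|_1\leq \|g+\lambda h\|_1\text{ for every scalar }\lambda\ \Longleftrightarrow\ \bigg|\int_\Omega \sgn g\cdot \bar h\, d\mu\bigg|\leq \int_{Z_g}|h|\, d\mu;$$
applying this with $h=f-Pf$ then yields \eqref{eq2.1}.

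To prove this equivalence I would identify all norming functionals at $g$. Realizing $(L^1(\mu))^*\cong L^\infty(\mu)$ via the pairing $(h,\phi)\mapsto \int_\Omega h\bar\phi\, d\mu$, the requirement $\int_\Omega g\bar\phi\, d\mu = \|g\|_1$ together with $\|\phi\|_\infty\leq 1$ forces $g\bar\phi=|g|$ a.e., hence $\phi=\sgn g$ on $\{g\neq 0\}$, while on $Z_g$ only $|\phi|\leq 1$ is required. Thus the norming functionals at $g$ are precisely those $\phi$ of the form $\sgn g+\alpha\chi_{Z_g}$ with $\alpha\in L^\infty(\mu)$ and $\|\alpha\|_\infty\leq 1$. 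By Hahn-Banach, $g$ is Birkhoff-James orthogonal to $h$ iff some such $\phi$ annihilates $h$, i.e., there exists $\alpha$ with $\|\alpha\|_\infty\leq 1$ supported on $Z_g$ such that
$$\int_\Omega \sgn g\cdot \bar h\, d\mu + \int_{Z_g}\alpha \bar h\, d\mu = 0.$$
As $\alpha$ ranges over the closed unit ball of $L^\infty(Z_g)$, the second integral traces out the closed disc of radius $\int_{Z_g}|h|\, d\mu$ (the extremal $\alpha=\sgn h$ attains the modulus, and scaling by unimodular constants fills the disc), so such an $\alpha$ exists iff $|\int_\Omega \sgn g\cdot \bar h\, d\mu|\leq \int_{Z_g}|h|\, d\mu$. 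This establishes \eqref{eq2.1}, and the final assertion is immediate: if $\mu(Z_g)=0$ the right-hand side of \eqref{eq2.1} vanishes, forcing the integral on the left to vanish as well.

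The main technical step is the identification of all norming functionals at an arbitrary $g\in L^1(\mu)$, in particular the freedom in choosing $\phi$ on the zero set $Z_g$; this is classical (see \cite{Sha}) and is precisely what produces the extra term $\int_{Z_g}|f-Pf|\, d\mu$ in \eqref{eq2.1}, which has no analogue in the $1<p<\infty$ criterion of Lemma \ref{2.1}, where the weight $|g|^{p-1}$ automatically vanishes on $Z_g$ and absorbs any such correction.
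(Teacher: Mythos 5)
Your argument is correct, but note that the paper does not actually prove Lemma \ref{2.2}: it is stated as a reformulation of known orthogonality criteria and attributed to Shapiro's book \cite{Sha}, so what you have written is essentially a self-contained proof of the cited fact rather than an alternative to an argument in the paper. Your route is the standard one: reduce contractivity to Birkhoff--James orthogonality of $\Ran P$ to $\Ker P$ (as the paper itself observes), identify the norming functionals at $g\in L^1(\mu)$ as $\sgn g+\alpha\chi_{Z_g}$ with $\|\alpha\|_\infty\le 1$, and observe that the attainable values of $\int_{Z_g}\alpha\bar h\,d\mu$ fill the closed disc of radius $\int_{Z_g}|h|\,d\mu$. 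All of these steps check out, including the degenerate case $g=0$ and the final assertion when $\mu(Z_g)=0$. The one place where you are slightly glib is the assertion that ``by Hahn--Banach, $g$ is Birkhoff--James orthogonal to $h$ iff some norming functional at $g$ annihilates $h$'': the forward implication is immediate, but the converse is the James--Singer characterization, whose proof requires a separation (or one-sided derivative) argument applied to the weak-$*$ compact convex set of norming functionals at $g$ --- and in the complex case one must handle orthogonality with respect to all complex scalars $\lambda$, not just real ones. This is classical and is precisely what \cite{Sha} supplies, so the proof is complete once that reference (or a short separation argument) is invoked explicitly. Your closing remark correctly explains why the correction term $\int_{Z_g}|f-Pf|\,d\mu$ appears only at $p=1$.
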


	\subsection{A Uniqueness Lemma}
	
	We denote by $dA$ the area measure on $\C$. For an integrable function $f$ on $\C$, we denote the Fourier transform of $f$ by $$\mathcal F(f)(w)=\int_{\C} f(z)e^{-2\pi i {\rm Re}\,(w\ol{z})} dA(z), \quad w\in \C,$$ 
	and the inverse Fourier transform of $f$ by
	$$
	\mathcal F^{-1}(f)(w)=\int_{\C} f(z)e^{2\pi i {\rm Re}\,(w\ol{z})} dA(z), \quad w\in \C.
	$$
	It is evident that if $f$ is treated as a function on $\R^2$ by identifying $\C$ with $\R^2$, then the above notation agrees with the ordinary Fourier transform and the ordinary inverse Fourier transform. 
	
	The following lemma is the first key step in the proof of Theorem \ref{1.6}.
	
	\begin{lemma}\label{2.3}
		Suppose $1\leq p< \infty $, $p\notin 2\mathbb N$. If $(1+|z|^{p-1})\varphi (z)\in L^1(\C)$ satisfying 
		\begin{equation*}
			|z|^{p-1}\sgn z * \varphi=0
		\end{equation*}
		almost everywhere, then $\varphi=0$ almost everywhere.
	\end{lemma}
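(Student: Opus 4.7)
The plan is to take the distributional Fourier transform of the equation $K\ast\varphi=0$, with $K(z):=|z|^{p-1}\sgn z$, and reduce it to a product equation $\widehat{K}\cdot\widehat{\varphi}=0$ in which $\widehat{K}$ can be shown to be nonzero almost everywhere on $\C$. The growth hypothesis $(1+|z|^{p-1})\varphi\in L^1(\C)$ is exactly what is needed to make $K\ast\varphi$ pointwise well-defined (since $|K(w-z)|\lesssim |w|^{p-1}+|z|^{p-1}$) and to ensure that $K\ast\varphi$ is a tempered distribution of at most polynomial growth; meanwhile $\varphi\in L^1(\C)$ guarantees that $\widehat{\varphi}$ is continuous on $\C$.

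The core step is therefore the computation of $\widehat{K}$. In polar coordinates $K(re^{i\theta})=r^{p-1}e^{i\theta}$, so $K$ is homogeneous of degree $p-1$ and sits in the single angular mode $m=1$. Since the 2D Fourier transform preserves angular modes (up to a factor $(-i)^m$), one obtains
\[
\widehat{K}(\rho e^{i\phi})=-2\pi i\, e^{i\phi}\int_0^{\infty} r^{p}J_1(2\pi r\rho)\,dr,
\]
the radial part being a Hankel transform of order $1$. Evaluating this integral (in the analytic-continuation/distributional sense) via the Weber--Schafheitlin formula gives
\[
\widehat{K}(\rho e^{i\phi})=c_p\,\rho^{-p-1}e^{i\phi},\qquad c_p=-\frac{i}{2^{p-1}\pi^p}\cdot\frac{\Gamma(p/2+1)}{\Gamma(1-p/2)}.
\]
The hypothesis $p\notin 2\mathbb N$ enters here in an essential way: $\Gamma(1-p/2)$ has simple poles precisely at $p\in 2\mathbb N$, so $1/\Gamma(1-p/2)$ vanishes only on that set, and therefore $c_p\ne 0$ under the hypothesis. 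Hence $\widehat{K}$ is a smooth nonvanishing function on $\C\setminus\{0\}$.

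With $\widehat{K}$ identified, the proof finishes by a standard argument. Testing the identity $K\ast\varphi=0$ against the inverse Fourier transform of a Schwartz function supported in $\C\setminus\{0\}$ and applying Fubini (legitimate thanks to the integrability hypothesis) yields $\widehat{K}\cdot\widehat{\varphi}=0$ as distributions on $\C\setminus\{0\}$; since $\widehat{K}$ is nonvanishing there and $\widehat{\varphi}$ is continuous, this forces $\widehat{\varphi}\equiv 0$ on $\C\setminus\{0\}$ and then, by continuity, on all of $\C$. Fourier inversion concludes $\varphi=0$. The main obstacle is the rigorous computation of $\widehat{K}$ as a tempered distribution and the evaluation of the Weber--Schafheitlin constant $c_p$: this is the step where the hypothesis $p\notin 2\mathbb N$ intervenes, and it is also the step that necessarily fails for $p\in 2\mathbb N$, since there $K$ becomes (a multiple of $|z|^{p-2}$ times) a polynomial in $z$, so $\widehat{K}$ is supported at the origin and cannot detect $\widehat{\varphi}$.
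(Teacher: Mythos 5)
Your proposal is correct and follows essentially the same route as the paper: both pass to the Fourier side, identify $\mathcal F\bigl(|z|^{p-1}\sgn z\bigr)$ as a constant multiple of $|z|^{-p-1}\sgn z$ with the constant nonvanishing precisely because $p\notin 2\mathbb N$, and conclude that $\mathcal F(\varphi)$ vanishes off the origin and hence everywhere. The only difference is that you derive the kernel's transform and its constant via the Hankel/Weber--Schafheitlin computation, whereas the paper simply cites the tabulated formula in Gelfand--Shilov.
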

	
	\begin{proof}
		By the assumption, we have
		\begin{equation*}
			\mathcal F^{-1}(f)*|z|^{p-1}\sgn z * \varphi=0, \quad\forall f\in C_c(\C\setminus\{0\}).
		\end{equation*}
		Note that for $p\geq 1$ and $p\notin 2\mathbb N$, the Fourier transform of $|z|^{p-1}\sgn z$, in the sense of distribution, is the generalized function $|z|^{-p-1}\sgn z$ multiplied by a constant $\gamma_p$; see \cite[pp. 383, Equation (9)]{GS64}. Then
		$$
		\mathcal F(\mathcal F^{-1}(f)*|z|^{p-1}\sgn z )=\gamma_p f\cdot |z|^{-p-1}\sgn z,\quad \forall f\in C_c(\C\setminus\{0\}),
		$$
		and so
		$$
		f\cdot |z|^{-p-1}\sgn z\cdot\mathcal F(\varphi)=0,\quad \forall f\in C_c(\C\setminus\{0\}).
		$$
		It follows that $\mathcal F(\varphi)=0$ on $\C\setminus\{0\}$ and hence $\varphi=0$ almost everywhere.
	\end{proof}
	
	Lemma \ref{2.3} fails when $p\in 2\mathbb N$. Indeed, suppose $p=2k+2$ with $k\in \mathbb N_0$. Then the equation $|z|^{p-1}\sgn z \ast \varphi =0$ is equivalent to 
	\begin{align*}
		\int_{\C} z^n \ol{z}^{m} \varphi(z) dA(z) = 0 ,\quad 0\leq n\leq k+1, 0\leq m \leq k,
	\end{align*}
	which admits infinitely many nontrivial solutions; for example, $\varphi(z)=z^{\ell }e^{-|z|^2}$ with $\ell \geq k+1$. Actually, in the next section we shall see that the failure of Lemma \ref{2.3} for even $p$ is the main reason why Theorem \ref{1.7} does not hold in general for these values of $p$. It is worth noting that this phenomenon also appears in the Rudin-Plotkin Equimeasurability Theorem \cite{Rud76}.

	\section{The Extension Theorems}
	
	In this section, we present the proofs of the extension theorems. The proof of Theorem \ref{1.6} is given in the first subsection, while the proof of Theorem \ref{1.7} is provided in the second.

	\subsection{The Case $p\geq 1$ and $p \notin 2\mathbb N$}
	
	For convenience, we restate Theorem \ref{1.6} below.
	
	\begin{theorem}\label{3.1}
		Suppose that $1\leq p<\infty$, $p\notin 2\mathbb N$, and that $X$ is a closed subspace of $L^p(\mu)$ containing the constant functions. Let $P$ be a contractive projection on $X$ satisfying $P\one =\one$, then $$Pf=\E(f| {\bf\Sigma'}), \quad \forall f\in X,$$ where ${\bf \Sigma'}$ is the sub $\sigma$-algebra generated by functions in $\Ran P$.
	\end{theorem}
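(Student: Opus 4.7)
The plan is to reduce everything to the uniqueness lemma (Lemma \ref{2.3}) via a Fourier-analytic argument applied to pushforward measures. Fix $f \in X$ and set $h := f - Pf$. For $1 < p < \infty$, Lemma \ref{2.1} applied with $g$ replaced by $g + \alpha\one \in \Ran P$ (using $P\one = \one$ and the linearity of $\Ran P$) yields the one-parameter family
\begin{equation*}
\int_\Omega |g + \alpha|^{p-1}\sgn(g+\alpha)\,\ol{h}\,d\mu = 0, \qquad \alpha \in \C.
\end{equation*}
The key observation is that this is a convolution identity for the complex measure $\mu_g$ on $\C$ obtained by pushing $\ol{h}\,d\mu$ forward along $g$: it reads $K * \mu_g \equiv 0$ with $K(z) := |z|^{p-1}\sgn z$. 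Since $g,h \in L^p(\mu)$, H\"older's inequality guarantees $(1+|z|^{p-1}) \in L^1(|\mu_g|)$, so an adaptation of Lemma \ref{2.3} for finite complex measures should force $\mu_g = 0$; equivalently, $\int_\Omega \phi(g)\,\ol{h}\,d\mu = 0$ for every bounded Borel function $\phi$ on $\C$.

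Next I would upgrade from a single generator to joint distributions. Taking $\phi(z) = e^{2\pi i\,{\rm Re}(w\ol z)}$ and replacing $g$ by an arbitrary linear combination $\sum_{j=1}^n \lambda_j g_j \in \Ran P$ (with $g_1,\ldots,g_n \in \Ran P$), and letting $w$ and $\lambda_1,\ldots,\lambda_n$ range independently over $\C$, the resulting exponents sweep out every linear character of $(g_1,\ldots,g_n)$. Hence the Fourier transform of the pushforward of $\ol{h}\,d\mu$ under $(g_1,\ldots,g_n)$ vanishes identically, forcing $\int_E \ol{h}\,d\mu = 0$ for every $E \in \sigma(g_1,\ldots,g_n)$. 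A standard $\pi$-$\lambda$ argument then extends this to every $E \in {\bf \Sigma}'$, giving $\E(f - Pf\,|\,{\bf \Sigma}') = 0$. Since $Pf$ is ${\bf \Sigma}'$-measurable, this yields $Pf = \E(f\,|\,{\bf \Sigma}')$.

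For the endpoint $p = 1$ I would substitute Lemma \ref{2.2} for Lemma \ref{2.1}. The inequality supplied by Lemma \ref{2.2} collapses to an equality at all but countably many values of $\alpha$ (those for which $\{g = -\alpha\}$ has positive $\mu$-measure), which is enough to recover the convolution identity $K * \mu_g = 0$ with $K(z) = \sgn z$ almost everywhere. Lemma \ref{2.3} again delivers $\mu_g = 0$, and the remaining steps are identical to the $p > 1$ case.

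The main obstacle I anticipate is the rigorous passage from the pointwise identity in $\alpha$ to the distributional statement of Lemma \ref{2.3}: one must verify that $K * \mu_g$ is well-defined pointwise despite the polynomial growth of $K$, and extend the Fourier-analytic proof of Lemma \ref{2.3} from $L^1(\C)$ densities to finite complex measures satisfying $(1+|z|^{p-1}) \in L^1(|\mu_g|)$. Regularizing $\mu_g$ against a Schwartz test function, exactly as in the proof of Lemma \ref{2.3}, should handle this; and it is precisely at this step that the hypothesis $p \notin 2\mathbb N$ enters in an essential way, consistent with the counterexamples noted immediately after Lemma \ref{2.3}.
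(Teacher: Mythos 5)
Your proposal is correct and follows essentially the same route as the paper: translate by constants in $\Ran P$ (using $P\one=\one$) to obtain the convolution identity for the pushforward measure, mollify by a Schwartz function so that Lemma \ref{2.3} applies, and then kill the joint pushforward under finite tuples of generators by a Fourier-transform argument before passing to all of ${\bf \Sigma}'$. The only (harmless) organizational difference is that the paper isolates the last step as Lemma \ref{3.2} and uses martingale convergence for the countable-to-finite reduction, where you use a $\pi$-$\lambda$ argument; both are valid.
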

	
	The proof of Theorem \ref{3.1} will be completed in two steps.
	Note that $Pf$ is ${\bf\Sigma'}$-measurable, the desired identity in Theorem \ref{3.1} can be rewritten as
	\begin{align}\label{eq3.1}
		\E(f-Pf| {\bf\Sigma'})=0,\quad  \forall f\in X.
	\end{align} 
	The first step is to establish the following local version of \eqref{eq3.1}:
	\begin{align}\label{eq3.2}
		\E(f-Pf| g)=0,\quad  \forall f\in X, \quad \forall g\in \Ran P.
	\end{align}
	Indeed, we will prove \eqref{eq3.2} by applying the orthogonality lemma and the uniqueness lemma in Section 2.
	
	The next key step is to show that \eqref{eq3.2} implies \eqref{eq3.1}. Note that this implication is not straightforward, as the $\sigma$-algebra generated by functions in $\Ran P$ is generally distinct from the union of the $\sigma$-algebras generated by each individual element $g\in \Ran P$. In fact, we arrive at the following result, which is of independent interest. 
	
	\begin{lemma}\label{3.2}
		Let $Y$ be a linear subspace of ${\bf \Sigma}$-measurable functions. Suppose that $f
		\in L^1(\mu)$ and $\E(f| g)=0$ for each $g\in Y$, then $\E\big(f|{\bf \Sigma}(Y)\big)=0.$
	\end{lemma}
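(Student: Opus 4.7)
The plan is to argue in two stages: first upgrade the hypothesis from a single $g\in Y$ to all finite tuples $(g_1,\ldots,g_n)$ from $Y$ by a Fourier-uniqueness argument that exploits the linearity of $Y$, and then pass from these finite-dimensional statements to $\E(f|{\bf \Sigma}(Y))$ via Dynkin's $\pi$--$\lambda$ theorem.

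For the first stage, I fix $g_1,\ldots,g_n\in Y$ and consider the complex Borel measure $\nu$ on $\C^n$ obtained as the push-forward of $f\,d\mu$ under the map $(g_1,\ldots,g_n):\Omega\to\C^n$. Its Fourier transform at $\lambda=(\lambda_1,\ldots,\lambda_n)\in\C^n$ reads
$$
\hat\nu(\lambda)=\int_\Omega e^{-i\,{\rm Re}\,(\sum_{j=1}^n \lambda_j g_j)}\,f\,d\mu.
$$
Because $Y$ is a linear subspace, the combination $g:=\sum_j\lambda_j g_j$ still belongs to $Y$, so the hypothesis $\E(f|g)=0$ gives $\int_\Omega \phi(g)\,f\,d\mu=0$ for every bounded Borel $\phi:\C\to\C$; specializing to $\phi(z)=e^{-i\,{\rm Re}\, z}$ forces $\hat\nu\equiv 0$, whence $\nu=0$. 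Equivalently, $\int_A f\,d\mu=0$ for every $A\in{\bf \Sigma}(\{g_1,\ldots,g_n\})$.

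For the second stage, let $\cal A_0$ denote the union of ${\bf \Sigma}(F)$ over all finite subsets $F\subseteq Y$; from the inclusion ${\bf \Sigma}(F_1)\cup{\bf \Sigma}(F_2)\subseteq{\bf \Sigma}(F_1\cup F_2)$ one sees that $\cal A_0$ is an algebra and that it generates ${\bf \Sigma}(Y)$. The collection $\M:=\{A\in{\bf \Sigma}:\int_A f\,d\mu=0\}$ contains $\Omega$ (apply the hypothesis to the zero element of $Y$, whose generated $\sigma$-algebra is trivial, to conclude $\int_\Omega f\,d\mu=0$), is closed under complements in $\Omega$, and, by $\sigma$-additivity, under countable disjoint unions; hence $\M$ is a $\lambda$-system. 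Stage~1 gives $\cal A_0\subseteq\M$, and Dynkin's $\pi$--$\lambda$ theorem then yields ${\bf \Sigma}(Y)\subseteq\M$, which is exactly the assertion $\E(f|{\bf \Sigma}(Y))=0$.

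The main obstacle is Stage~1: the hypothesis only controls the marginal distributions of $f\,d\mu$ under each individual $g\in Y$, and without using the linear structure of $Y$ this is strictly weaker than controlling the joint distributions under the vector-valued maps $(g_1,\ldots,g_n)$. The linearity of $Y$ together with Fourier uniqueness on $\C^n$ is precisely what bridges that gap; Stage~2 is then a standard monotone-class-type wrap-up.
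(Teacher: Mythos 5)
Your proof is correct. Stage~1 is essentially the paper's argument: the authors also push $f\,d\mu$ forward under $(g_1,\dots,g_n)$, compute the Fourier transform of the resulting measure, and use the linearity of $Y$ to recognize $\exp(-2\pi i\sum_j\alpha_j g_j)$ as a bounded function measurable with respect to the single element $\sum_j\alpha_j g_j\in Y$, so that the hypothesis kills every Fourier coefficient. (The authors first reduce to real-valued $f$ and ${\rm Re}\,Y$ and work on $\R^n$; you work directly on $\C^n$, which is an immaterial difference.) Where you genuinely diverge is Stage~2. The paper globalizes in two steps: it writes ${\bf \Sigma}(Y)$ as the union of ${\bf \Sigma}(\mathcal G)$ over countable $\mathcal G\subseteq Y$, and then passes from finite to countable families via Doob's martingale convergence theorem. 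You instead observe that the union $\mathcal A_0$ of the finitely generated $\sigma$-algebras is an algebra (in particular a $\pi$-system) generating ${\bf \Sigma}(Y)$, that $\{A:\int_A f\,d\mu=0\}$ is a $\lambda$-system, and you conclude by Dynkin's theorem. This is a cleaner and more elementary wrap-up --- it avoids martingale convergence entirely and handles the passage to the full $\sigma$-algebra in one stroke; the paper's route, by contrast, makes the approximation structure (finite $\to$ countable $\to$ all of $Y$) explicit. One cosmetic point either way: since the paper takes ${\bf \Sigma}(Y)$ to be the \emph{complete} $\sigma$-algebra generated by $Y$, Dynkin's theorem gives you the uncompleted $\sigma(\mathcal A_0)$, but sets of the completion differ from those of $\sigma(\mathcal A_0)$ only by null sets, on which $\int_A f\,d\mu$ is unchanged, so your conclusion stands.
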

	
	Let us begin by proving Theorem \ref{3.1} on the assumption that Lemma \ref{3.2} holds, and then proceed to prove Lemma \ref{3.2}.
	
	\begin{proof}[Proof of Theorem \ref{3.1}]	
		We first consider the case $p=1$. Suppose $g\in \Ran P$, $f\in X$ are fixed. Since $\one \in \Ran P$, for any $w\in \C$, the function $w-g\in \Ran P$. Applying Lemma \ref{2.2} we obtain 
		$$\int_\Omega \sgn (w-g)\ol{(f-Pf)}d\mu =0, \quad \forall w\in \C\setminus A,$$ where $A=\{w\in \C: \mu(Z_{w-g})\neq 0\}$. The set $A$ is at most countable, because there are at most countably many values of $w\in \C$ such that the set $\{x: g(x)=w\}$ is not null.		
		Let $\sigma$ be the push-forward measure defined for Borel sets $E\subseteq \C$ as $$\sigma(E)=\int_{g^{-1}(E)}(f-Pf)d\mu.$$ Then $\sigma$ is a finite complex Borel measure on $\C$, and by the change of variable formula it holds that 
		$$\int_\C \sgn(w-z)d\sigma(z)=0,\quad \forall w\in\C\setminus A.$$ In particular, the convolution $$\sgn z * \sigma=0$$ almost everywhere. For any function $h$ in the Schwartz calss $\cal S(\C)$, the function $\varphi=\sigma*h\in L^1(\C)$, and $$\sgn z*\varphi=\sgn z * (\sigma*h)=(\sgn z*\sigma)*h=0.$$ It follows from Lemma \ref{2.3} that $\varphi=0$. The arbitrariness of $h\in \cal S(\C)$ implies the measure $\sigma$ is zero. Thus
		\begin{align*}
			\int_{g^{-1}(E)}(f-Pf)d\mu =0 
		\end{align*}
		for any Borel subset $E\subseteq \C$, which means
		\begin{align*}
			\E(f-Pf| g)=0.
		\end{align*}
		The above identity holds for any function $g\in \Ran P$, by applying Lemma \ref{3.2} we obtain
		\begin{align*}
			\E(f-Pf| {\bf \Sigma'})=0.
		\end{align*}
		Since $Pf$ is ${\bf \Sigma'}$-measurable, we conclude that 
		\[
		Pf = \mathbb{E}(f | {\bf \Sigma'}), \quad \forall f \in X.
		\]
		This completes the proof for the case $p = 1$. For $1 < p < \infty$ with $p \notin 2\mathbb{N}$, the proof follows the same reasoning as in the case $p = 1$, with Lemma \ref{2.1} replacing Lemma \ref{2.2}. The argument is even simpler, as the identity
		\[
		\int_\Omega |w-g|^{p-1} \sgn (w - g) \overline{(f - Pf)} \, d\mu = 0
		\]
		holds for all $w \in \mathbb{C}$.
	\end{proof}

	Finally, we prove Lemma \ref{3.2}.
	\begin{proof}[Proof of Lemma \ref{3.2}]
		Without loss of generality, we can assume that $f$ is real-valued, and that $Y$ is a real linear subspace of real-valued functions. Otherwise, we replace $Y$ by
		\[
		{\rm Re}\, Y := \{ {\rm Re}\, g : g \in Y \},
		\]
		so that ${\bf \Sigma}(Y) = {\bf \Sigma}({\rm Re}\, Y)$.
		
		Note that
		$${\bf \Sigma}(Y)=\bigcup_{\cal G \subseteq Y, \,\cal G \,\,{\rm is\,\,countable}} {\bf \Sigma}(\cal G).$$
		As a result, to prove $\E(f|{\bf \Sigma}(Y))=0$, it is sufficient to show $\E(f|\cal G)=0$ for any fixed countable subset $\cal G=\{g_1,g_2,g_3,\ldots\}$. For $n\geq 1$ we denote by $\cal G_n:=\{g_1,g_2,\ldots, g_n\}$. By Doob's martingale convergence theorem, we only need to check 
		$\E(f|\cal G_n)=0$ for any $n\in \mathbb N$. Define the map $$\Lambda:\Omega \to \R^n;\quad  \omega\mapsto \big(g_1(\omega), g_2(\omega),\ldots, g_n(\omega)\big).$$
		Let $v=\Lambda_*(fd\mu)$ be the pushforward measure on $\R^n$, defined for any Borel subset $E\subseteq \R^n$ as $$v(E)=\int_{\Lambda^{-1}(E)}fd\mu .$$
		Observe that any measurable set in ${\bf \Sigma}(\cal G_n)$ is of the form $\Lambda^{-1}(E)\Delta F$, where $E\subseteq\R^n$ is a Borel subset and $F$ is a null set. Therefore, to prove $\E(f|\cal G_n)=0$, it is sufficient to show the measure $v$ is zero. 
		
		For any $\alpha=(\alpha_1, \alpha_2,\ldots ,\alpha_n)\in \R^n,$ by the change of variable formula we have 
		\begin{align}\label{eq3.3}
			\int_{\R^n} e^{-2\pi i\alpha\cdot x} dv(x)= \int_{\Omega} e^{-2\pi i\alpha \cdot \Lambda(\omega)} f(\omega)d \mu(\omega)= \E\bigg[f\cdot \exp\bigg(-2\pi i\sum_{j=1}^n \alpha_j g_j\bigg)\bigg].
		\end{align}
		The assumption $\E(f | \sum_{j=1}^n \alpha_j g_j) = 0$ implies that $\E(f \cdot G) = 0$ for every bounded function $G$ measurable with respect to the $\sigma$-algebra generated by $\sum_{j=1}^n \alpha_j g_j$. In particular, the expectation in \eqref{eq3.3} equals zero, and hence the Fourier transform of $v$ vanishes. This completes the proof of Lemma \ref{3.2}.
	\end{proof}

	As mentioned in the introduction, Theorem \ref{3.1} no longer holds if the condition $p\notin 2\mathbb N$ is omitted. An elementary example is presented below to illustrate this failure.
	
	\noindent {\bf Example.}
	Let $p=2k\in 2\mathbb N$ and let $X$ be the closed subspace of $L^{2k}(\mathbb{T})$ spanned by $\{1,z,z^{k+1}, z^{k+2},\ldots \}$. Consider the projection $P$ in $X$ defined by
	$$
	P: a+bz+z^{k+1} r(z) \mapsto a+bz, \quad a,b \in \C, r\in \C[z].
	$$
	It is routine to verify that $P$ extends to a contractive projection on $X$. However, $P$ is not the restriction of any conditional expectation, because the coordinate function $z$ lies in $\Ran P$ while $P\neq {\rm Id}_X$.

	\subsection{The Case $p\in 2\mathbb N$}
	In this subsection, we consider the case $p = 2k + 2$ with $k\in \mathbb N$. 
	
	\begin{theorem}\label{3.3}
		Suppose $p=2k+2$ with $k\geq 1$. Let $X$ be a closed subspace of $L^p(\mu)$ containing the constants. Suppose that $\cal M(X)$ satisfies one of the following conditions:
		\begin{enumerate}
			\item The multiplier algebra $\cal M(X)$ is dense in $X$.
			\item The intersection of $\cal M(X)+\ol{\cal M(X)}$ and the closed unit ball of $L^\infty(\mu)$ is $w^*$-dense in the closed unit ball of $L^\infty(\mu)$.
		\end{enumerate}
		Then any contractive projection $P$ on $X$ satisfying $P\one =\one$ is of the form $$Pf=\E(f| {\bf\Sigma'}), \quad \forall f\in X,$$ where ${\bf \Sigma'}$ is the sub $\sigma$-algebra generated by functions in $\Ran P$.
		
	\end{theorem}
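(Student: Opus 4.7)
The plan is to follow the strategy of the proof of Theorem \ref{3.1} as far as the orthogonality Lemma \ref{2.1} allows, and to use the hypotheses on $\cal M(X)$ to compensate for the failure of the uniqueness Lemma \ref{2.3} at even exponents. Starting from Lemma \ref{2.1}, every $g\in\Ran P$ and $f\in X$ satisfy $\int |g|^{2k}g\,\ol{(f-Pf)}\,d\mu=0$. Since $\one\in\Ran P$ and $\Ran P$ is a linear subspace, substituting $g\mapsto\sum_j c_j g_j$ with $g_j\in\Ran P$ and $c_j\in\C$ turns the integrand into the polynomial $(\sum c_jg_j)^{k+1}(\sum\ol{c_j}\,\ol{g_j})^k$ in the independent variables $(c_j,\ol{c_j})$. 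Equating coefficients to zero yields the bi-degree–restricted moment identities
\[
\int g_{i_1}\cdots g_{i_{k+1}}\,\ol{g_{j_1}\cdots g_{j_k}}\,\ol{(f-Pf)}\,d\mu=0,\qquad g_{i_s},g_{j_t}\in\Ran P,\ f\in X,
\]
and, on absorbing $\one$ as some of the factors, the analogous identity for any product of at most $k+1$ (respectively $k$) factors from $\Ran P$.

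Because $Pf$ is automatically $\mathbf\Sigma'$-measurable, the target conclusion $Pf=\E(f|\mathbf\Sigma')$ is equivalent to $\int \phi\,(f-Pf)\,d\mu=0$ for every bounded $\mathbf\Sigma'$-measurable $\phi$. Lemma \ref{3.2} together with a monotone class / Stone--Weierstrass argument reduces this to the moment identity above \emph{without} the bi-degree bound $(k+1,k)$. The real task is therefore a bootstrap that lifts the bounded-degree orthogonality to arbitrary bi-degree.

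The multiplier hypotheses are designed precisely for this bootstrap. For any $h\in\cal M(X)$ one has $hf\in X$, so applying the moment identity with $hf$ in place of $f$ gives
\[
\int g^a\ol{g}^b\,\ol{(hf-P(hf))}\,d\mu=0,\qquad |a|\le k+1,\ |b|\le k.
\]
The key intermediate step is to extract, from these identities together with the contractivity of $P$, a module-type relation $P(hf)=h\,Pf$ whenever $h\in\cal M(X)\cap\Ran P$; equivalently, such $h$ preserve $\Ker P$. Once this commutation is in hand, each multiplier inserts an additional conjugate factor $\ol h$ into the integrand of the moment identity, and iteration lifts the bi-degree restriction. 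Under hypothesis (1) the density of $\cal M(X)$ in $X$ provides enough multipliers to reach every polynomial test function built from $\Ran P$. Under the weaker hypothesis (2) the $w^*$-density of $\cal M(X)+\ol{\cal M(X)}$ in the closed unit ball of $L^\infty(\mu)$, combined with dominated convergence, allows one to test $f-Pf$ directly against an arbitrary bounded $\mathbf\Sigma'$-measurable function, bypassing the polynomial lift.

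The main obstacle is the module identity in the third step. At $p\in 2\mathbb N$ the uniqueness Lemma \ref{2.3} is unavailable, and the example immediately following Theorem \ref{3.1} shows that without some structural constraint on $\cal M(X)$ the conclusion genuinely fails. Deriving the commutation $P(hf)=h\,Pf$—or an equivalent substitute strong enough to promote the finite-degree moments to arbitrary degree—using only Lemma \ref{2.1} together with one of the two multiplier hypotheses is the technical heart of the proof.
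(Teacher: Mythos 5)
Your outline correctly identifies the two load-bearing ideas --- polarizing Lemma \ref{2.1} to obtain bi-degree $(k+1,k)$ moment identities, and then bootstrapping via a multiplicative identity for $P$ on $\cal M(X)$ --- but it stops exactly where the proof has to do work: you declare that deriving the commutation relation ``is the technical heart of the proof'' without deriving it. This is a genuine gap, not a routine verification. The paper closes it as follows. Since $\one\in\Ran P$ and $k\geq 1$, the polarized identities specialize to $\int g_1g_2\ol{g_3}\,\ol{(f-Pf)}\,d\mu=0$ and $\int g\,\ol{(f-Pf)}\,d\mu=0$ for all $g,g_i\in\Ran P$ and $f\in X$. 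Working in $L^2(\mu)$ (available because $\mu$ is a probability measure and $p\geq 4$), these two identities alone give $\int_\Omega P(hPf)\,\ol{k}\,d\mu=\int_\Omega Ph\cdot Pf\cdot\ol{k}\,d\mu$ for all $k\in X$, and taking $k=P(hPf)$ yields $\|P(hPf)\|_2=\|Ph\cdot Pf\|_2$, whence $P(hPf)=Ph\cdot Pf$ for every $h\in\cal M(X)$. Note that this is the \emph{averaging identity}, with $Pf$ rather than $f$ inside; your proposed relation $P(hf)=hPf$ for $h\in\cal M(X)\cap\Ran P$ is a stronger statement, clear only a posteriori, and is not what the argument needs. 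The averaging identity feeds the power trick $(Ph)^n=P\big(h(Ph)^{n-1}\big)$, giving $\|Ph\|_\infty\leq\|h\|_\infty$; under hypothesis (1) one then tests $f-Pf$ against $(Ph)^n\ol{(Ph)^m}$, obtains $\E(f-Pf\,|\,Ph)=0$, and concludes with Lemma \ref{3.2} and the density of $P(\cal M(X))$ in $\Ran P$.

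Your treatment of hypothesis (2) also would not go through as described. You cannot ``test $f-Pf$ directly against an arbitrary bounded $\mathbf\Sigma'$-measurable function'' via the $w^*$-density of $\cal M(X)+\ol{\cal M(X)}$: for $h,k\in\cal M(X)$ the quantity $\int(h+\ol{k})(f-Pf)\,d\mu$ need not vanish, since $h-Ph$ and $f-Pf$ both lie in $\Ker P$ and their pairing is not controlled by the moment identities. What the paper does instead is show that $h+\ol{k}\mapsto Ph+\ol{Pk}$ is a well-defined contraction on $\cal M(X)+\ol{\cal M(X)}$ (using positivity results for unital contractions on operator systems), deduce $\big|\int Pf\cdot(h+\ol{k})\,d\mu\big|\leq\|f\|_1\|h+\ol{k}\|_\infty$ and hence, by the assumed $w^*$-density, $\|Pf\|_1\leq\|f\|_1$; thus $P$ extends to a contractive projection on the $L^1(\mu)$-closure of $X$, and the conclusion follows from the already-established Theorem \ref{3.1} at $p=1$. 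This reduction to the $p=1$ case is the essential move that your sketch misses.
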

	
	\begin{proof}
		For $w_1,\ldots, w_{2k+1}\in \C$, $g_1,\ldots, g_{2k+1} \in {\rm Ran}\, P$ and $f\in X$, it follows from Lemma \ref{2.1} that 
		\begin{align*}
			\int_{\Omega} \bigg|\sum_{j=1}^{2k+1} w_jg_j\bigg|^{2k} \bigg(\sum_{j=1}^{2k+1} w_jg_j\bigg)\cdot\overline{(f-Pf)}d\mu =0.
		\end{align*}
		The left integral can be expressed as a trigonometric polynomial about variables $w_j$. By considering the coefficient of the term $w_1\cdots w_{k+1}\ol{w_{k+2}}\cdots \ol{w_{2k+1}}$, we obtain
		\begin{align*}
			\int_{\Omega} g_1\cdots g_{k+1} \ol{g_{k+2}}\cdots \ol{g_{2k+1}} \cdot\overline{(f-Pf)} d\mu =0.
		\end{align*}
		Since $\one\in X$ and $k\geq 1$, we have 
		\begin{align}\label{eq3.4}
			\int_{\Omega} g_1 g_2\ol{g_3}\cdot  \ol{(f-Pf)} d\mu =0,\quad \forall g_1,g_2,g_3\in \Ran P, \quad 	\forall f\in X,
		\end{align}
		and
		\begin{align}\label{eq3.5}
			\int_{\Omega} g\cdot \ol{(f-Pf)}\,d\mu =0. \quad \forall g\in \Ran P, \quad \forall f\in X.
		\end{align}
		Note that \eqref{eq3.5} means $\Ran P$ and $\Ker P$ is orthogonal in $L^2(\mu)$.
		
		We first claim that $P$ satisfies the averaging identity:
		\begin{align}\label{eq3.6}
			P(hPf)=Ph\cdot Pf, \quad \forall h\in \cal M(X), \quad \forall f\in X.
		\end{align}
		Observe that for any $k\in X$, it holds that
		\begin{align*}
			\int_\Omega P(hPf )\cdot \ol{k} \,d\mu\stackrel{\eqref{eq3.5}}{=}&\int_\Omega P(hPf )\cdot \ol{Pk} \,d\mu\\
			\stackrel{\eqref{eq3.5}}{=}& \int_\Omega h\cdot Pf \cdot\ol{Pk}\, \,d\mu\\
			\stackrel{\eqref{eq3.4}}{=}&\int_\Omega Ph\cdot Pf \cdot \ol{Pk} \,d\mu \\
			\stackrel{\eqref{eq3.4}}{=}&\int_\Omega Ph\cdot Pf\cdot \ol{k} \,d\mu.
		\end{align*}
		Thus we obtain
		\begin{align}\label{eq3.7}
			\int_\Omega P(hPf )\cdot \ol{k} \,d\mu=\int_\Omega Ph\cdot Pf \cdot \ol{k} \,d\mu, \quad \forall h\in\cal M(X), \quad \forall f , k\in X.
		\end{align}
		In particular, if we put $k=P(hPf )$ in \eqref{eq3.7}, then 
		\begin{align*}
			\|P(hPf)\|^2_2&=\int_\Omega P(hPf) \cdot \ol{P(hPf)}d\mu\\
			&\stackrel{\eqref{eq3.5}}{=}\int_\Omega P(hPf)\cdot\ol{h Pf}d\mu\\
			&\stackrel{\eqref{eq3.7}}{=}\int_\Omega Ph \cdot Pf \cdot \ol{hPf}d\mu\\
			&\stackrel{\eqref{eq3.4}}{=}\int_\Omega Ph\cdot Pf \cdot \ol{Ph\cdot Pf}d\mu\\
			&=\|PhPf\|^2_2.
		\end{align*}
		Combining this with \eqref{eq3.7}, we conclude that $P(hPf)=PhPf$. This proves the identities in \eqref{eq3.6}.

		Next, we claim that 
		\begin{align}\label{eq3.8}
			Ph\in L^\infty(\mu), \quad \forall h\in \M(X);\quad  {\rm and}\quad  \|Ph\|_{\infty} \leq \|h\|_{\infty}.
		\end{align} The argument employs the standard ``power trick''. By applying \eqref{eq3.6}, an easy induction yields $$(Ph)^n=P\big(h (Ph)^{n-1}\big)\in \Ran P, \quad \forall n\in \mathbb N.$$ 
		This implies $\|(Ph)^n\|_p\leq \|(Ph)^{n-1}\|_p\|h\|_\infty$. Applying this inequality iteratively, we see $$\|(Ph)^n\|_p^{1/n} \leq \|h\|_{\infty}, \quad \forall n\in \mathbb N.$$ Taking the limit as $n$ tends to infinity, it follows that $Ph \in L^\infty(\mu)$ and $\|Ph\|_{\infty} \leq \|h\|_\infty$. This proves the claim.

		Now, suppose that $\cal M(X)$ is dense in $X$. From the above argument it follows that, for every $h\in \M(X)$ and $n\in\mathbb N$, 
		$$(Ph)^n \in \Ran \, P\cap L^\infty(\mu).$$
		Therefore, by \eqref{eq3.4} we obtain
		\begin{align*}
			\int_\Omega (Ph)^n\ol{(Ph)^m} (f-Pf) d\mu =0, \quad \forall m,n \in \mathbb N_0,\quad \forall f\in X.
		\end{align*}
		Since $Ph$ is bounded, the subspace spaned by $\{(Ph)^n\ol{(Ph)^m}: \forall m,n \in \mathbb N_0\}$ is dense in $L^1(\Omega, {\bf \Sigma}(Ph), \mu)$.
		Thus
		\begin{align}\label{eq3.9}
			\E(f-Pf| Ph)=0, \quad \forall h\in \cal M(X),  \quad \forall f\in X.
		\end{align}
		Applying Lemma \ref{3.2}, we deduce from \eqref{eq3.9} that $$\E(f-Pf| {\bf \Sigma''}), \quad \forall f\in X,$$ where $\bf\Sigma''$ is the sub $\sigma$-algebra generated by all functions in $P(\cal M(X))$. Since $\cal M(X)$ is dense in $X$, $P(\cal M(X))$ is dense in $\Ran P$. This implies any function $g\in \Ran P$ is ${\bf \Sigma''}$-measurable. Hence ${\bf \Sigma''}={\bf \Sigma'}$ and $$Pf=\E(f| {\bf \Sigma}'),\quad f\in X.$$ This proves the first part of Theorem \ref{3.3}.

		Next we assume that $\cal M(X)$ satisfies the second condition in Theorem \ref{3.3}. From \eqref{eq3.8}, $P:\cal M(X)\rightarrow L^\infty(\mu)$ is a contraction with $P\one=\one$. By some classical results concerning positive maps in $C^*$-algebras \cite[Corollary 2.8 and Proposition 2.12]{Pa01}, the map 
		$$
		\widetilde{P}: \M(X)+\overline{\M(X)}\rightarrow L^\infty; h+\overline{k}\mapsto Ph+\overline{Pk},
		$$
		is well-defined and contractive.
		For any $f\in X$ and $h,k\in \M(X)$, it follows from \eqref{eq3.4} that
		$$
		\int_\Omega Pf\cdot h\,d\mu=\int_\Omega Pf\cdot Ph \,d\mu=\int_\Omega f\cdot Ph \,d\mu,
		$$
		and
		$$
		\int_\Omega Pf\cdot \overline{k}\,d\mu=\int_\Omega Pf\cdot \overline{Pk}\,d\mu=\int_\Omega f\cdot \overline{Pk}\, d\mu.
		$$
		As a consequence,
		\begin{align*}
			\left| \int_{\Omega}Pf \cdot(h+\overline{k})d\mu\right|&= \left|\int_\Omega f \cdot \widetilde{P}(h+\overline{k})d\mu\right| \\
			&\leq \|f\|_1\cdot \|\widetilde{P}(h+\overline{k})\|_\infty\\
			&\leq \|f\|_1 \cdot \|h+\overline{k}\|_\infty.
		\end{align*}
		By the assumption, the set $\{h+\ol{k}: h,k \in \cal M(X), \|h+\ol{k}\|_\infty \leq 1\}$ is $w^*$-dense in the closed unit ball of $L^\infty(\mu)$, so we obtain from the duality that  $$\|Pf\|_1\leq \|f\|_1, \quad \forall f\in X.$$
		This means $P$ extends to a contractive projection on the closure in $L^1(\mu)$ of $X$. An application of Theorem \ref{3.1} completes the proof.
	\end{proof}

	\section{Contractive Projections  on $H^p$}
	
	In this section, we will prove Theorem \ref{1.2}, which characterizes all contractive projections and 1-complemented subspaces in $H^p$, $1 \leq p < \infty$, $p \neq 2$. We mention that, in the case $0<p<1$, $H^p$ contains considerably rare contractive
	projections in contrast with the present setting \cite{FGL25}.

	Firstly, we show that for any $(\eta,\varphi)\in \X_p$, the space $H^p_{\eta,\varphi}$ is 1-complemented in $H^p$. For convenience, we review the definitions of $\X_p$ and $H^p_{\eta,\varphi}$.
	
	\begin{definition}\label{4.1}
		For $1\leq p<\infty$, $p\neq 2$, we define the set $\X_p$ to consist of all pairs of functions $(\eta, \varphi)$ that satisfy the following conditions: 
		\begin{enumerate} 
			\item $\eta$ is an inner function with $\eta(0)=0$.
			\item $\varphi\in H^p$, and $\|\varphi\|_p=1$.
			\item Let $\varphi=\xi\cdot F$ be the inner-outer factorization of $\varphi$. The triple $(\eta, \xi, F)$ satisfies $$\xi\cdot F^{p/2}\in H^2\ominus \eta H^2.$$
		\end{enumerate}
	\end{definition} 
	
	The next proposition lists the basic properties of the pairs in $\X_p$.
	
	\begin{proposition}\label{4.2}
		Suppose $(\eta,\varphi)\in \X_p$, then
		\begin{enumerate} 
			\item $\{\eta^k: k\in \mathbb Z\}$ is an orthonormal system in $L^2(\T, |\varphi |^p dm)$.
			\item $\mathbb E(|\varphi|^p | \eta)=\one$.
			\item The map $$T_{\eta,\varphi}: f \mapsto \varphi \cdot f\circ \eta$$ is an isometry on $H^p$.
		\end{enumerate} 
	\end{proposition}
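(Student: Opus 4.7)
The plan is to establish the three assertions in order, each feeding into the next.

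For (1), the key idea is to rewrite $|\varphi|^p$ as $|\xi F^{p/2}|^2$ and transfer the weighted orthogonality question into ordinary $L^2(dm)$ orthogonality. Since $\xi$ is inner, $|\varphi|^p = |F|^p = \xi F^{p/2}\cdot \overline{\xi F^{p/2}}$ a.e.\ on $\T$. Writing $\bar\eta^m=\eta^{-m}$ (valid since $|\eta|=1$ a.e.), the relation $\la\eta^n,\eta^m\ra_{L^2(|\varphi|^p dm)}=\delta_{nm}$ reduces to showing $\int_\T \eta^k|\varphi|^p\,dm=\delta_{k,0}$ for every $k\in\mathbb Z$. The case $k=0$ is just $\|\varphi\|_p^p=1$. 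For $k\geq 1$, condition (3) in Definition \ref{4.1} places $\xi F^{p/2}\in H^2\ominus\eta H^2$, while $\eta^k\xi F^{p/2}=\eta\cdot(\eta^{k-1}\xi F^{p/2})\in \eta H^2$, so $\la \eta^k\xi F^{p/2},\,\xi F^{p/2}\ra_{L^2(dm)}=0$. The case $k\leq-1$ follows by taking complex conjugates.

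For (2), I would invoke the classical Löwner--Littlewood theorem: any inner function $\eta$ with $\eta(0)=0$ is measure-preserving on $(\T,dm)$, equivalently $\int \eta^k\,dm=\delta_{k,0}$ for all $k\in\mathbb Z$. Combined with part (1), the two finite positive Borel measures $\eta_*(|\varphi|^p dm)$ and $\eta_* dm=dm$ on $\T$ share the same Fourier coefficients, so they coincide. Equivalently, $\int (g\circ\eta)\,|\varphi|^p\,dm=\int (g\circ\eta)\,dm$ for every bounded Borel $g$ on $\T$, which is precisely the defining property of $\E(|\varphi|^p|\eta)=\one$.

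For (3), the isometric identity follows in two strokes: applying the conditional expectation identity from (2) to the $\sigma(\eta)$-measurable function $|f\circ\eta|^p$ gives $\int |f\circ\eta|^p|\varphi|^p\,dm=\int |f\circ\eta|^p\,dm$, and Löwner's theorem again gives $\int |f\circ\eta|^p\,dm=\int |f|^p\,dm$. Hence $\|\varphi\cdot f\circ\eta\|_p=\|f\|_p$. To see that $T_{\eta,\varphi}f$ lands in $H^p$ and not merely in $L^p$, one checks it first on analytic polynomials $f(z)=\sum_{k\geq 0}a_k z^k$, where $T_{\eta,\varphi}f=\sum a_k \varphi\eta^k$ is a finite sum of products of $H^p$ and $H^\infty$ functions, then extends to all $f\in H^p$ via density of polynomials and the isometric bound together with the fact that $H^p$ is closed in $L^p$.

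The main obstacle is part (1), which is the whole reason the exponent $p/2$ appears in Definition \ref{4.1}: the factorization $|\varphi|^p=|\xi F^{p/2}|^2$ is what converts a genuinely $L^p$-weighted orthogonality question into an $L^2$-orthogonality that can be read off the Beurling-type decomposition $H^2=(H^2\ominus \eta H^2)\oplus \eta H^2$. Once (1) is secured, parts (2) and (3) are essentially consequences of the measure-preserving property of inner functions fixing the origin.
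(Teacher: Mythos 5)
Your proposal is correct and follows essentially the same route as the paper: everything rests on the identity $\int_\T \eta^k|\varphi|^p\,dm=\langle \eta^k\xi F^{p/2},\xi F^{p/2}\rangle=\delta_{k,0}$, which is exactly how the paper derives (1) and (2). The only cosmetic difference is in (3), where the paper approximates $|q|^p$ uniformly by trigonometric polynomials and integrates against $|\varphi|^p\,dm$ directly, whereas you pass through the conditional-expectation identity of (2) and the measure-preserving property of $\eta$; the two arguments are interchangeable.
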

	
	\begin{proof}
		Suppose $\varphi=\xi \cdot F$ is the inner-outer factorization of $\varphi$. Since $\xi\cdot F^{p/2} \in H^2\ominus \eta H^2$, for $k\in \mathbb Z$ we have
		\begin{align}\label{eq4.1}
			\int_{\T}{\eta}^{k} \cdot |\varphi|^p dm=\la \eta^{k}
			\xi F^{p/2}, \xi F^{p/2}\ra=\begin{cases}
				1, & k=0,\\
				0, & k\neq 0.
			\end{cases}
		\end{align} 
		From \eqref{eq4.1}, $\{\eta^k: k\in \mathbb Z\}$ is an orthonormal basis of $L^2(\T,{\bf \Sigma}(\eta), |\varphi|^p dm)$. This proves (1) and (2). Let $q$ be an analytic polynomial. Since $|q|^p$ is a continuous function on $\T$, there exists a sequence of trigonometric polynomials $\{p_n\}$ that converges uniformly to $|q|^p$ on $\T$. In particular, $p_n\circ \eta \to |q\circ \eta|^p$ in $L^\infty$, and hence
		\begin{align*}
			\int_\T |\varphi \cdot q\circ \eta|^p dm =\lim_{n\to \infty} \int_{\T} |\varphi|^p \cdot p_n\circ \eta \,dm \stackrel{\eqref{4.1}}{=}\lim_{n\to \infty}\int_{\T} p_n \,dm=\int_\T |q|^p dm.
		\end{align*}
		This shows $T_{\eta,\varphi}$ extends to an isometry on $H^p$.
	\end{proof}

	\begin{definition}\label{4.3}
		For $(\eta,\varphi)\in \X_p$, we define $$H^p_{\eta,\varphi}:= \Ran T_{\eta,\varphi}= \{\varphi\cdot f\circ\eta: f\in H^p\}.$$ By Proposition \ref{4.2}, $H^p_{\eta,\varphi}$ is a closed subspace of $H^p$ that is isometric with $H^p$.
	\end{definition}
	
	Recall that $\E_\varphi(\cdot |\eta)$ denotes the conditional expectation operator on the weighted space $L^p(|\varphi|^pdm)$ with respect to the $\sigma$-algebra generated by $\eta$.
	
	\begin{theorem}[The sufficiency part of Theorem \ref{1.2}]\label{4.4}
		For any $(\eta,\varphi)\in \X_p$, the operator $$Q:=\varphi \E_{\varphi}(\cdot/\varphi |\eta)$$ is a contractive projection on $L^p$ that carries $H^p$ onto $H^p_{\eta,\varphi}$. In particular, the space $H^p_{\eta,\varphi}$ is $1$-complemented in $H^p$.
	\end{theorem}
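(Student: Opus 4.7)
The plan is to realize $Q$ as the composition
\[ L^p(\T,dm)\ \xrightarrow{\,/\varphi\,}\ L^p\bigl(\T,|\varphi|^p\,dm\bigr)\ \xrightarrow{\,\E_\varphi(\cdot|\eta)\,}\ L^p\bigl(\T,|\varphi|^p\,dm\bigr)\ \xrightarrow{\,\varphi\,\cdot\,}\ L^p(\T,dm). \]
Because $\|\varphi\|_p=1$ makes $|\varphi|^p\,dm$ a probability measure and $\varphi\neq 0$ a.e.\ on $\T$, the outer two arrows are mutually inverse isometries and the middle one is a contractive projection; hence $Q$ is a contractive projection on $L^p$. The inclusion $H^p_{\eta,\varphi}\subseteq Q(H^p)$ follows at once: if $g=\varphi\cdot h\circ\eta$ with $h\in H^p$, then $g/\varphi=h\circ\eta$ is already $\mathbf{\Sigma}(\eta)$-measurable, so $\E_\varphi(g/\varphi|\eta)=h\circ\eta$ and $Qg=g$.

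The substantive point is the reverse inclusion $Q(H^p)\subseteq H^p_{\eta,\varphi}$. For $f\in H^p$, the function $\E_\varphi(f/\varphi|\eta)$ is $\mathbf{\Sigma}(\eta)$-measurable and hence of the form $h\circ\eta$ for some $h$; Proposition \ref{4.2}(2) guarantees that the pushforward of $|\varphi|^p\,dm$ under $\eta$ equals $dm$, so $h\in L^p(\T,dm)$. I would then verify $h\in H^p$ by checking $\int_\T h(z)\,z^m\,dm = 0$ for every $m\geq 1$. Unravelling the defining property of conditional expectation together with the pushforward identity, this reduces to
\[ \int_\T \eta^m\, f\,\psi\,dm =0,\qquad \psi:=\frac{|\varphi|^p}{\varphi}=\ol\varphi\,|\varphi|^{p-2}, \]
which, by the standard duality $(H^p)^{\perp}=zH^{p'}$ inside $L^{p'}$, is equivalent to $\eta^m\psi\in zH^{p'}$ for every $m\geq 1$.

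This is exactly where condition (3) of Definition \ref{4.1} enters. Writing $G:=\xi F^{p/2}$, a direct manipulation yields $\psi=F^{p/2-1}\ol G$. Since $G\in H^2\ominus\eta H^2$, a short inner-product argument on $\T$ shows $\eta\ol G\in zH^2$ — this is essentially the standard conjugation on the model space. Consequently
\[ \eta\psi = z\cdot F^{p/2-1}\cdot H\qquad\text{for some }H\in H^2. \]
Because $F$ is outer, $F^{p/2-1}$ lies in the Smirnov class $N^+$, and so does $F^{p/2-1}H$. The pointwise identity $|F^{p/2-1}H|^{p'}=|\varphi|^p\in L^1$ together with Smirnov's theorem $N^+\cap L^{p'}=H^{p'}$ then forces $F^{p/2-1}H\in H^{p'}$, whence $\eta\psi\in zH^{p'}$. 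Multiplying by $\eta^{m-1}\in H^\infty$ gives $\eta^m\psi\in zH^{p'}$ for every $m\geq 1$, completing the reduction.

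The main obstacle is making this Smirnov-class step work uniformly across the full range $1\leq p<\infty$, $p\neq 2$: when $p<2$ the factor $F^{p/2-1}$ is a negative power of an outer function and need not lie in any standard Hardy space, so one cannot close the argument by a direct $L^2$-pairing between $fF^{p/2-1}$ and $K_\eta$. Routing the key function through $N^+$ and then invoking Smirnov's theorem is precisely what allows one to handle $p<2$ and $p>2$ in a single stroke.
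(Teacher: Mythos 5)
Your proof is correct, and for the substantive inclusion $Q(H^p)\subseteq H^p_{\eta,\varphi}$ it takes a genuinely different route from the paper's. The paper restricts to the dense set $\{qF: q\in\C[z]\}$ (so that $q/\xi$ is bounded), expands $\E_\varphi(q/\xi\,|\,\eta)=\sum_{k\in\mathbb Z}\la q/\xi,\eta^k\ra_\varphi\,\eta^k$ in the orthonormal system $\{\eta^k\}$ of $L^2(|\varphi|^p dm)$, kills the terms with $k<0$ using $\la qF^{p/2},\eta^{|k|}\xi F^{p/2}\ra=0$ (condition (3)), identifies the surviving sum as $h\circ\eta$ with $h\in H^2\cap L^\infty=H^\infty$, and finishes by density and continuity of $Q$. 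You instead argue on the annihilator side: you reduce $h\in H^p$ to $\eta^m\psi\in zH^{p'}$ with $\psi=F^{p/2-1}\,\ol{\xi F^{p/2}}$, extract $\eta\ol G\in zH^2$ from $G=\xi F^{p/2}\perp\eta H^2$ (the same use of condition (3), in dual form), and then upgrade $F^{p/2-1}H\in N^+$ to $H^{p'}$ via Smirnov's theorem from $|F^{p/2-1}H|^{p'}=|\varphi|^p\in L^1$. Both arguments ultimately rest on the identical vanishing $\la\eta^m qF^{p/2},\xi F^{p/2}\ra=0$; what differs is how one passes beyond the dense/bounded situation --- the paper by norm density, you by placing the dual function $\eta^m\psi$ into $zH^{p'}$ once and for all. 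Your route treats all $f\in H^p$ simultaneously, makes the role of $p'$-duality explicit, and, as you note, the $N^+$ detour is exactly what absorbs the negative power $F^{p/2-1}$ when $p<2$ (at $p=1$ one reads your identity as $|F^{p/2-1}H|=1$, giving $N^+\cap L^\infty=H^\infty=(H^1)^\perp/z$, so the endpoint is fine). Two small points to include in a polished version: check $\psi\in L^{p'}$ (immediate from $|\psi|=|\varphi|^{p-1}$ and $(p-1)p'=p$), and note that the Borel function $h$ produced by the Doob--Dynkin lemma satisfies $\varphi\cdot h\circ\eta=T_{\eta,\varphi}h$ in the sense of Definition \ref{4.3} (approximate $h$ by polynomials and use the isometry of Proposition \ref{4.2}), so that $Qf$ indeed lands in $H^p_{\eta,\varphi}=\Ran T_{\eta,\varphi}$.
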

	
	\begin{proof}
		It is evident that $Q$ is a contractive projection on $L^p$. To show $Q$ maps $H^p$ onto $H^p_{\eta,\varphi}$, it is sufficient to prove:

		\begin{enumerate}
			\item $Q$ maps $H^p$ into $H^p_{\eta,\varphi}$.
			\item For $f\in H^p_{\eta,\varphi}$, we have $Qf=f$.
		\end{enumerate} 
		
		We first prove statement (1). Suppose $\varphi=\xi \cdot F$ is the inner-outer factorization of $\varphi$. By proposition \ref{4.2}, for $g\in L^2(\T, |\varphi|^pdm)$, it holds that $$\E_{\varphi}(g|\eta)=\sum_{k\in \mathbb Z}  \,\la g,\eta^k \ra_{\varphi}\, \eta^k.$$ where $\la \cdot ,\cdot \ra_{\varphi}$ denotes the inner product in $L^2(\T,|\varphi|^p dm)$. In particular, for any fixed analytic polynomial $q$, we have
		\begin{align*}
			\E_{\varphi}(q/\xi | \eta)=\sum_{k\in \mathbb Z}  \la q/\xi, \eta^k \ra_{\varphi} \,\eta^k.
		\end{align*}
		Because $\xi F^{p/2}\perp \eta H^2$, for $k<0$ we have $$\la q/\xi, \eta^k \ra_{\varphi}=\la qF^{p/2}, \xi F^{p/2} \eta^k \ra=0.$$ This results in
		\begin{align}\label{eq4.2}
			\E_{\varphi}(q/\xi | \eta)=\sum_{k=0}^\infty  \la q/\xi, \eta^k \ra_{\varphi} \,\eta^k.
		\end{align} 
		By Bessel's inequality, we have
		\begin{align*}
			\sum_{k=0}^\infty \bigg| \la q/\xi, \eta^k \ra_{\varphi}\bigg|^2 \leq \int_\T |q/\xi|^2 |\varphi|^p \leq \|q\|_{\infty} <\infty,
		\end{align*} 
		and hence the function $$h(z):=\sum_{k=0}^\infty  \la q/\xi, \eta^k \ra_{\varphi} \,z^k \in H^2.$$ Clearly, $\E_{\varphi}(q/\xi | \eta) \in L^\infty$. Combining this with \eqref{eq4.2}, we observe that $$h\circ \eta=\E_{\varphi}(q/\xi | \eta) \in H^2\cap L^\infty =H^\infty.$$ This forces $h\in H^\infty$. As a consequence,
		$$Q(qF)=\varphi\E_{\varphi}(q/\xi |\eta)=\varphi \cdot h\circ \eta\in H^p_{\eta,\varphi}.$$ 
		Because $\{qF: q\in \mathbb C[z]\}$ is dense in $H^p$, $Q$ maps $H^p$ into $H^p_{\eta,\varphi}$. The statement (1) is proved.
		
		To prove the statement (2), it is sufficient to show $$Q(\varphi\cdot q\circ\eta)=\varphi\cdot q\circ\eta$$ for any analytic polynomial $q$. This holds trivially, as the function $q\circ \eta$ is ${\bf \Sigma}(\eta)$-measurable.
	\end{proof}

	We have now proved the sufficiency of Theorem \ref{1.2}. In the remaining part of this section, we proceed to prove the necessity, which can be reduced to the following form.
	
	\begin{theorem}[The necessity part of Theorem \ref{1.2}]\label{4.5}
		Let $1\leq p< \infty$, $p\neq 2$. If $P: H^p\to H^p$ is a contractive projection, then either $P$ is of rank one, or there exists $(\eta,\varphi)\in \X_p$, such that $$P=\varphi\E_{\varphi}(\cdot/\varphi| \eta)\big|_{H^p}.$$ In particular, every nontrivial 1-complemented subspaces of $H^p$ is of the form $H^p_{\eta,\varphi}$ for some $(\eta,\varphi)\in \X_p$.
	\end{theorem}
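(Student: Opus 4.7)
The plan is to reduce Theorem \ref{4.5} to the extension theorems of Section 3 via a weighted change of variables, and then recover the inner-function structure by analyzing the resulting conditional expectation. To set up, choose any $\varphi \in \Ran P$ with $\|\varphi\|_p = 1$ (possible since $\Ran P$ has dimension greater than one). Because a nonzero $H^p$-function vanishes only on a null subset of $\T$, the map $\iota: H^p \to L^p(|\varphi|^p dm)$ defined by $\iota(f) = f/\varphi$ is an isometric embedding into the $L^p$-space of the probability measure $|\varphi|^p dm$, and $Y := \iota(H^p)$ is a closed subspace containing $\one = \iota(\varphi)$. The conjugated operator $P' := \iota P \iota^{-1}$ is then a contractive projection on $Y$ with $P'\one = \one$, placing us exactly in the setting of the extension theorems.

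When $p \notin 2\mathbb{N}$, Theorem \ref{3.1} applies directly and yields $P'g = \E(g|{\bf \Sigma}')$ on $Y$, where ${\bf \Sigma}' := {\bf \Sigma}(\Ran P')$. When $p = 2k+2$ with $k \geq 1$, I would verify condition (2) of Theorem \ref{3.3}: every $h \in H^\infty$ lies in $\cal{M}(Y)$ (since $h\varphi \in H^p$ and $h \cdot (f/\varphi) = (hf)/\varphi \in Y$), and trigonometric polynomials---lying in $H^\infty + \overline{H^\infty}$---are $w^*$-dense in the unit ball of $L^\infty(\T)$. Either way $P'$ is a conditional expectation on $Y$. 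I next introduce $A := \Ran P' \cap H^\infty$, a unital subalgebra of $H^\infty$: multiplicative closure follows from the averaging property of $P'$, since for $g_1/\varphi, g_2/\varphi \in A$ the product $(g_1g_2)/\varphi^2$ is bounded, ${\bf \Sigma}'$-measurable, and equal to $g_3/\varphi$ with $g_3 = g_1g_2/\varphi \in H^\infty \cdot H^p \subseteq H^p$ lying in $Y$. A classification theorem for weak*-closed unital subalgebras of $H^\infty$ then identifies $A = \{f \circ \eta : f \in H^\infty\}$ for an inner function $\eta$, which we may normalize so that $\eta(0) = 0$.

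It remains to check ${\bf \Sigma}' = {\bf \Sigma}(\eta)$ (giving $\Ran P = H^p_{\eta, \varphi}$ after conjugation) and the orthogonality $\xi F^{p/2} \in H^2 \ominus \eta H^2$ of Definition \ref{4.1}. A density argument---bounded elements of $\Ran P'$ can be approximated by elements of $A$, hence are $\eta$-measurable---gives ${\bf \Sigma}' \subseteq {\bf \Sigma}(\eta)$. For condition (3), since $\widetilde P := \varphi \E_\varphi(\cdot/\varphi|\eta)$ extends $P$ to $L^p$ and $P(H^p) \subseteq H^p$ by hypothesis, the function $\widetilde P(z^n F) = \varphi \E_\varphi(\bar\xi z^n|\eta)$ must lie in $H^p$ for every $n \geq 0$; computing this conditional expectation and demanding the vanishing of the negative Fourier coefficients translates into the orthogonalities $\xi F^{p/2} \perp z^n \eta^m F^{p/2}$ for all $n \geq 0$, $m \geq 1$, and since $F^{p/2}$ is outer the closed linear span of $\{z^n \eta^m F^{p/2}\}$ equals $\eta H^2$, yielding (3). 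The main obstacle lies in the classification step---verifying that $A$ is weak*-closed and invoking the subalgebra classification---together with the density argument promoting $A$ to a description of all of $\Ran P'$; the latter is complicated by the possibility that $\varphi$ has a nontrivial inner factor, in which case $H^\infty$ is not dense in $Y$, so a more refined choice of $\varphi$ within $\Ran P$ or use of the anti-analytic multipliers appearing in hypothesis (2) of Theorem \ref{3.3} may be required.
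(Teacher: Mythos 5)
Your first step coincides with the paper's Step I (Theorem \ref{4.6}): transfer by $M_\varphi$ to a subspace of $L^p(|\varphi|^p dm)$ containing the constants, verify hypothesis (2) of Theorem \ref{3.3} via trigonometric polynomials, and conclude $P=\varphi\,\E_\varphi(\cdot/\varphi\,|\,{\bf\Sigma}')$. After that there are two genuine gaps. First, the ``classification theorem for weak*-closed unital subalgebras of $H^\infty$'' that you invoke does not exist: $\C+z^2H^\infty$ is a weak*-closed unital subalgebra of $H^\infty$ that is not of the form $\{f\circ\eta: f\in H^\infty\}$ for any inner $\eta$ (it contains $z^2$ and $z^3$, which would force $\eta$ to be a degree-one Blaschke product and the algebra to be all of $H^\infty$). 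What is true, and what the paper actually uses, is Aleksandrov's theorem: $\E(\cdot\,|\,\mathscr F)$ leaves $H^p(\T)$ invariant if and only if $\mathscr F$ is trivial or generated by an inner function vanishing at the origin. Even to reach the point where that theorem applies, one must pass from the weighted statement ``$\E_\varphi(\cdot\,|\,{\bf\Sigma}')$ leaves $H^p_\varphi$ invariant'' to the unweighted statement ``$\E(\cdot\,|\,{\bf\Sigma}')$ leaves $H^2$ invariant''; this reduction is the content of the paper's Lemmas \ref{4.10}--\ref{4.11} and Theorem \ref{4.9}, and your sketch does not address it.

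Second, your verification of condition (3) of Definition \ref{4.1} cannot be run for an arbitrary unit vector $\varphi\in\Ran P$. If $(\eta,\varphi)\in\X_p$, then $\varphi\eta$ is also a unit vector in $\Ran P$, yet $(\eta,\varphi\eta)\notin\X_p$, since the relevant vector $\xi\eta F^{p/2}$ lies inside $\eta H^2$ rather than orthogonal to it. Moreover your computation of $\widetilde P(z^nF)$ tacitly assumes $\E(|\varphi|^p\,|\,\eta)=\one$, so that $\{\eta^k\}$ is orthonormal in $L^2(|\varphi|^pdm)$; this normalization also fails for a generic choice. You flag at the end that ``a more refined choice of $\varphi$ may be required,'' but that refined choice is the heart of the necessity proof: the paper constructs $\varphi=\xi F$ by (i) renormalizing the outer part via $|F|^p=|\phi|^p/\E(|\phi|^p\,|\,\eta)$ (Lemma \ref{4.15} guarantees this outer function exists), and (ii) taking $\xi$ to be the greatest common divisor of all inner functions $\chi$ such that $\chi/\theta$ is $\eta$-measurable, then proving via Lemma \ref{4.14} that this gcd still belongs to the family, via Lemma \ref{4.13} that $\varphi/\phi$ is $\eta$-measurable (hence $\varphi\in\Ran P$), and only then establishing $\xi F^{p/2}\perp\eta H^2$ by showing the relevant conditional expectations land in $H^\infty\circ\eta$. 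None of this machinery appears in the proposal, so the argument as written does not close.
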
 
	
	The proof of Theorem \ref{4.5} is divided into three steps.
	
	\subsection{Step I: An Application of The Extension Theorems}
	
	In this subsection, we apply the extension theorems established in Section 3 to prove the following weaker form of Theorem \ref{4.5}. 
	
	\begin{theorem}\label{4.6}
		Let $1\leq p< \infty$, $p\neq 2$, and let $P:H^p\to H^p$ be a contractive projection. Let ${\bf \Sigma}^P$ be the $\sigma$-algebra generated by all functions in $$\{f/h: f, h\in \Ran P, h\neq 0\}.$$ Then for any function $\phi\in \Ran P$ with $\|\phi\|_p=1$, it holds that $$Pf = \phi \E_{\phi} (f/\phi |{\bf \Sigma}^P), \quad f\in H^p.$$
	\end{theorem}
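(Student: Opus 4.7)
The plan is to reduce Theorem \ref{4.6} to the abstract extension theorems of Section 3 by transporting $P$ to a projection on a subspace of $L^p(|\phi|^p dm)$ that fixes the constants. Since $\phi\in H^p$ has $\|\phi\|_p=1$, the measure $|\phi|^p dm$ is a probability measure on $\T$, and $\phi$ is nonzero a.e.\ (nontrivial $H^p$ functions vanish only on a null set). Multiplication by $\phi$ is then an isometric isomorphism
$$U_\phi:L^p(|\phi|^p dm)\to L^p(dm),\quad g\mapsto \phi g,$$
which carries the closed subspace
$$Y:=U_\phi^{-1}(H^p)=\{g\in L^p(|\phi|^p dm):g\phi\in H^p\}$$
onto $H^p$, and in particular $\one\in Y$. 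Transporting $P$ through $U_\phi$ yields
$$\widetilde P g:=P(\phi g)/\phi,$$
a contractive projection on $Y$ satisfying $\widetilde P\one=\one$ (using $P\phi=\phi$).

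For $p\in[1,\infty)\setminus 2\mathbb N$, Theorem \ref{1.6} applies directly to $\widetilde P$ and yields
$$\widetilde P g=\E(g|{\bf \Sigma}'),$$
where ${\bf \Sigma}'$ is the $\sigma$-algebra generated by $\Ran\widetilde P=\{f/\phi:f\in\Ran P\}$. The substantive work lies in the even case $p=2k+2$ with $k\geq 1$, where one must verify one of the hypotheses on the multiplier algebra $\M(Y)$ before invoking Theorem \ref{1.7}. The key identification is $\M(Y)=H^\infty$: for $h\in\M(Y)$, the condition $hg\in Y$ for all $g\in Y$ translates, after setting $f=g\phi$, to $h\cdot H^p\subseteq H^p$, a multiplier condition on $H^p$ whose multiplier algebra is the classical $H^\infty$; the reverse inclusion is evident. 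Condition (1) of Theorem \ref{1.7}, namely density of $\M(Y)=H^\infty$ in $Y$, amounts to $H^\infty\cdot \phi$ being dense in $H^p$ and fails when the inner part of $\phi$ is nontrivial, so one must instead verify condition (2).

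To verify condition (2), observe that $L^\infty(|\phi|^p dm)=L^\infty(\T)$ as Banach spaces, since $|\phi|^p dm$ and $dm$ are mutually absolutely continuous. For any $f\in L^\infty(\T)$ with $\|f\|_\infty\leq 1$, the Fej\'er means $\sigma_N f$ are trigonometric polynomials, hence admit a decomposition $\sigma_N f=h_N+\ol{k_N}$ with $h_N,k_N\in\C[z]\subseteq H^\infty$; the Fej\'er kernel being a probability kernel gives $\|\sigma_N f\|_\infty\leq 1$, and by dominated convergence $\sigma_N f\to f$ in the $w^*$-topology of $L^\infty(|\phi|^p dm)$ (test against any $L^1(|\phi|^p dm)$ function). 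This yields the desired $w^*$-density, and Theorem \ref{1.7} gives the same conclusion $\widetilde P g=\E(g|{\bf \Sigma}')$.

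To finish, I will identify ${\bf \Sigma}'={\bf \Sigma}^P$: since $\phi\in\Ran P$ is nonvanishing a.e., each generator $f/\phi$ of ${\bf \Sigma}'$ lies in the defining family of ${\bf \Sigma}^P$, while conversely $f/h=(f/\phi)/(h/\phi)$ is a ratio of ${\bf \Sigma}'$-measurable functions (both nonvanishing a.e.), hence ${\bf \Sigma}'$-measurable. Transporting back through $U_\phi$ converts $\widetilde P g=\E_\phi(g|{\bf \Sigma}^P)$ on $Y$ into $Pf=\phi\,\E_\phi(f/\phi|{\bf \Sigma}^P)$ on $H^p$, as desired. The principal obstacle I anticipate is the even-$p$ case, where Theorem \ref{1.6} is unavailable and one must combine the multiplier-algebra identification $\M(Y)=H^\infty$ with the Fej\'er-means verification of condition (2); the latter relies crucially on the uniform contractivity of the Fej\'er kernel, since partial Fourier sums would not suffice.
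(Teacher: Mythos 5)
Your proposal is correct and follows essentially the same route as the paper: transfer $P$ by the multiplication isometry $M_\phi$ to a contractive projection on $H^p_\phi$ fixing $\one$, invoke Theorem \ref{1.6} for $p\notin 2\mathbb N$ and Theorem \ref{1.7}(2) for even $p$, and then identify the resulting $\sigma$-algebra with ${\bf \Sigma}^P$ via the a.e.\ nonvanishing of $\phi$. The only difference is that you spell out the verification that $\mathcal M(H^p_\phi)=H^\infty$ and the Fej\'er-mean argument for the $w^*$-density condition, which the paper states as routine.
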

	
	To apply the extension theorems, we need to construct a contractive projection that fixes the constants. We do so by using the standard transfer argument.
	
	\begin{definition}\label{4.7}
		For a function $\phi\in H^p$ with $\|\phi\|_p=1$. We define the space $$H^p_\phi:=\{g\in L^p(\T,|\phi|^pdm): g \cdot \phi \in H^p \}.$$
	\end{definition}
	Clearly, $H^p_\phi$ is a closed subspace of $L^p(\T,|\phi|^pdm)$. Moreover, the multiplication operator $$M_\phi: L^p(\T,|\phi|^pdm)\to L^p(\T,dm): \quad g\mapsto \phi \cdot g$$ is an isometric isomorphism that carries $H^p_\phi$ onto $H^p$. It is also easy to verify that $\cal M(H^p_\phi)=H^\infty$
	and the multiplier algebra $\cal M(H^p_\phi)$ satisfies the condition (2) in Theorem \ref{3.3}.


	\begin{proof}[Proof of Theorem \ref{4.6}]
		Let $\phi$ be a unit vector in $\Ran P$. We define 
		\begin{align*}
			P_\phi: H^p_\phi \to H^p_\phi; \quad g\mapsto \frac{P(g\phi)}{\phi}.
		\end{align*}
		Evidently, $P_\phi=M_{1/\phi} P M_\phi |_{H^p_\phi}$, and hence $P_\phi$ is a contractive projection on $H^p_\phi$ that satisfies $P_\phi \one=\one$. If $p\notin 2\mathbb N$, we apply Theorem \ref{3.1}, otherwise we apply Theorem \ref{3.3}. Consequently, as long as $p \neq 2$, the contractive projection $P_\phi$ extends to a conditional expectation $$\E_{\phi}\big(\cdot| {\bf \Sigma}^P_\phi\big): L^p(\mathbb{T},|\phi|^p dm)\to L^p(\mathbb{T},|\phi|^p dm),$$ where ${\bf \Sigma}^P_\phi$ is the $\sigma$-algebra generated by functions of the form $f/\phi$ with $f\in \Ran P$. This shows
		$$M_{1/\phi} P M_\phi \big|_{H^p_\phi}=P_\phi = \E_{\phi}\big(\cdot |{\bf \Sigma}^P_\phi\big)\big|_{H^p_\phi},$$ and hence $$P =M_\phi \E_\phi \big(\cdot | {\bf \Sigma}^P_\phi\big) M_{1/\phi}\big|_{H^p}=\phi \E_{\phi} \big(\cdot /\phi |{\bf \Sigma}^P_\phi\big)\big|_{H^p}.$$ It remains to show ${\bf \Sigma}^P_\phi={\bf \Sigma}^P$ for any $\phi\in \Ran P$ with $\|\phi\|_p=1$. Indeed, suppose $h \in \Ran P$ and $h\neq 0$, then $h/\phi$ is ${\bf \Sigma}^P_\phi$-measurable. Thus for any function $f\in \Ran P$, $f/h=(f/\phi)\cdot (\phi/h)$ is also ${\bf \Sigma}^P_\phi$-measurable. The arbitrariness of $f$ and $h$ implies ${\bf \Sigma}^P \subseteq {\bf \Sigma}^P_\phi$. However, it is clear that ${\bf \Sigma}^P_\phi\subseteq{\bf \Sigma}^P$. Thus ${\bf \Sigma}^P_\phi={\bf \Sigma}^P$. The proof is completed. 
	\end{proof}
	
	As pointed out in the introduction, Theorem \ref{4.6} is sufficient to answer Wojtaszczyk's problem \cite{Wo03}. The details are presented in the following.
	
	\begin{corollary}\label{4.8}
		Let $1 \leq p <\infty$, $p\neq 2$,  and let $P:H^p\to H^p$ be a contractive projection. Then there exists a unique contractive projection $\widetilde{P}$ on $L^p$ that leaves $H^p$ invariant satisfying $$P=\widetilde{P}\big|_{H^p}.$$
	\end{corollary}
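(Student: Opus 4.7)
The plan is to exhibit the explicit extension using Theorem~\ref{4.6}, then prove uniqueness by invoking the Douglas-And\^o-type theorems (Theorem~\ref{3.1} or~\ref{3.3}) on $L^p(|\phi|^p dm)$ and reducing the problem to an equality of sub-$\sigma$-algebras.

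For existence, I would fix a unit vector $\phi\in\Ran P$ and set
$$
\widetilde{P}f := \phi\,\E_\phi\!\bigl(f/\phi\,\big|\,\mathbf{\Sigma}^P\bigr),\qquad f\in L^p.
$$
The isometric isomorphism $M_\phi:L^p(|\phi|^p dm)\to L^p$ conjugates $\widetilde{P}$ into the conditional expectation $\E_\phi(\cdot|\mathbf{\Sigma}^P)$, so $\widetilde{P}$ is a contractive projection on $L^p$. Theorem~\ref{4.6} gives $\widetilde{P}|_{H^p}=P$, and trivially $\widetilde{P}(H^p)=P(H^p)\subseteq H^p$, so $\widetilde{P}$ leaves $H^p$ invariant.

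For uniqueness, let $Q$ be any contractive projection on $L^p$ with $Q|_{H^p}=P$. Since $Q\phi=\phi$, the conjugate $Q_\phi:=M_{1/\phi}QM_\phi$ is a contractive projection on $L^p(|\phi|^p dm)$ fixing $\mathbf{1}$. Applying Theorem~\ref{3.1} (if $p\notin 2\mathbb{N}$) or Theorem~\ref{3.3} (if $p\in 2\mathbb{N}$; here $\M(L^p(|\phi|^p dm))=L^\infty$, which is dense in $L^p(|\phi|^p dm)$), one obtains a complete sub-$\sigma$-algebra $\mathbf{\Sigma}_Q$ with $Q_\phi=\E_\phi(\cdot|\mathbf{\Sigma}_Q)$ and $\mathbf{\Sigma}_Q\supseteq\mathbf{\Sigma}^P$. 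The tower property then yields $\widetilde{P}Q=Q\widetilde{P}=\widetilde{P}$, so $R:=Q-\widetilde{P}$ is an idempotent on $L^p$ with $R|_{H^p}=0$ and $\Ran R\subseteq\Ker\widetilde{P}$, and by contractivity of $Q$ the restriction $R|_{\Ker\widetilde{P}}$ is a contractive projection on $\Ker\widetilde{P}$ vanishing on $\Ker P=H^p\cap\Ker\widetilde{P}$.

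The main obstacle is showing $R\equiv 0$. This amounts to verifying that any $u\in L^p(\T,\mathbf{\Sigma}_Q,|\phi|^p dm)$ satisfying $\E_\phi(u|\mathbf{\Sigma}^P)=0$ and $\phi u\in\Ran Q$ must vanish. My plan is to apply the Birkhoff-James orthogonality criterion (Lemma~\ref{2.1} for $p>1$; Lemma~\ref{2.2} for $p=1$) to $v=\phi u$ with test functions $f\in H^p$: using $f-Qf=f-Pf=\phi(f/\phi-\E_\phi(f/\phi|\mathbf{\Sigma}^P))$, a direct computation reduces the orthogonality to the constraint $\overline{w_2}\cdot|\phi|^{p-2}\bar\phi\in H^{p'}_0$, where $w_2:=u|u|^{p-2}-\E_\phi(u|u|^{p-2}|\mathbf{\Sigma}^P)$. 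The hard step is to extract $w_2=0$ (and hence $u=0$, since $u\mapsto u|u|^{p-2}$ is a bijection on $\mathbb{C}$ for $p>1$) from this analytic constraint combined with the inner-outer factorization $\phi=\xi F$, via a Fourier/$H^{p'}$-theoretic rigidity argument in the spirit of Lemma~\ref{2.3}.
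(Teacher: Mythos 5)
Your existence argument coincides with the paper's, and the first half of your uniqueness argument (conjugating $Q$ by $M_\phi$, invoking Theorem \ref{3.1} or \ref{3.3} to write $Q_\phi=\E_\phi(\cdot|\mathbf{\Sigma}_Q)$ with $\mathbf{\Sigma}_Q\supseteq\mathbf{\Sigma}^P$, and deducing $\widetilde{P}Q=Q\widetilde{P}=\widetilde{P}$) is correct. But the proof then stops exactly where the real content of uniqueness lies: you must show $R=Q-\widetilde{P}=0$, equivalently $\mathbf{\Sigma}_Q=\mathbf{\Sigma}^P$ modulo null sets, and your sketch defers this to an unproved ``Fourier/$H^{p'}$-theoretic rigidity argument.'' This is a genuine gap, not a routine verification. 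Note also two concrete problems with the route you propose: first, testing Birkhoff--James orthogonality of $v=\phi u\in\Ran Q$ only against $\{f-Qf: f\in H^p\}=\Ker P$ uses strictly less than orthogonality to $\Ker Q$, and it is not evident that the resulting constraint $\ol{w_2}\,|\phi|^{p-2}\ol{\phi}\in H^{p'}_0$ forces $w_2=0$; second, your parenthetical that $u\mapsto u|u|^{p-2}$ is a bijection explicitly excludes $p=1$, so that case is left entirely unhandled.

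The irony is that your own setup already contains the short argument the paper uses. Once you know that both $\widetilde{P}_\phi$ and $Q_\phi$ are conditional expectations on $L^p(\T,|\phi|^p dm)$, they are \emph{positive} operators fixing $\one$, hence commute with complex conjugation: $Q_\phi\ol{f}=\ol{Q_\phi f}$. They agree on $H^p_\phi$, which contains all analytic polynomials, so by conjugation they agree on all trigonometric polynomials, and by density they agree on all of $L^p(\T,|\phi|^p dm)$. No orthogonality computation, no case split on $p$, and no rigidity lemma is needed. I would replace the last paragraph of your proposal with this positivity argument.
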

	
	\begin{proof}
		From the proof of Theorem \ref{4.6}, we observe that for any $\phi \in \Ran P$, the conditional expectation $\E_\phi(\cdot | {\bf \Sigma}^P)$ on $L^p(\T, |\phi|^p dm)$ always leaves $H^p_{\phi}$ invariant. As a result, if we define
		\[
		\widetilde{P} := M_\phi \, \E_\phi\big(\cdot | {\bf \Sigma}^P\big) \, M_{1/\phi} : L^p \to L^p,
		\]
		then $\widetilde{P}$ is a contractive projection on $L^p$ that leaves $H^p$ invariant satisfying $\widetilde{P}\big|_{H^p} = P$. It remains to show that such an extension $\widetilde{P}$ of $P$ is unique. Assume $\widetilde{P}'$ is a contractive projection on $L^p$ such that $\widetilde{P}'\big|_{H^p}=P$. Pick a unit vector $\phi$ in $\Ran P$. Then both $$\widetilde{P}_\phi:=M_{1/\phi}\widetilde{P} M_\phi\quad {\rm and}\quad \widetilde{P}'_\phi:=M_{1/\phi}\widetilde{P}' M_\phi$$ are contractive projections on $L^p(\T,|\phi|^pdm)$ preserve the constant functions. So both of them are conditional expectations, and therefore positive. In particular, for $f\in L^p(\T,|\phi|^pdm)$ we have $$\widetilde{P}_\phi \ol{f}=\ol{\widetilde{P}_\phi f} \quad {\rm and} \quad  \widetilde{P}'_\phi \ol{f}=\ol{\widetilde{P}'_\phi f}.$$
		Note that $\widetilde{P}_\phi$ coincides with $\widetilde{P}'_\phi$ on $H^p_\phi$, which contains all analytic polynomials. Thus, they are also identical on all trigonometric polynomials. Since trigonometric polynomials are dense in $L^p(\T,|\phi|^pdm)$, we conclude that $\widetilde{P}_\phi=\widetilde{P}'_\phi$, and hence $\widetilde{P}=\widetilde{P}'$.
	\end{proof}

	\subsection{Step II: The Identification of $\eta$}
	
	Let $1\leq p< \infty$, $p\neq 2$, and let $P:H^p\to H^p$ be a contractive projection. Recall that Theorem \ref{4.6} states that, upon fixing any unit vector $\phi\in \Ran P$, the projection $P$ admits the representation $$P=\phi \E_{\phi}(\cdot /\phi| {\bf \Sigma}^P),$$ where ${\bf \Sigma}^P$ is the $\sigma$-algebra generated by all functions in $$\{f/h: f, h\in \Ran P, h\neq 0\}.$$ A key observation is that the conditional expectation $\E_{\phi}(\cdot |{\bf \Sigma}^P)$, as an operator on the weighted space $L^p(\T, |\phi|^p dm)$, must leave $H^p_\phi$ invariant. If $\one \in \Ran P$, then we can chose $\phi=\one$ to conclude that the conditional expectation $\E(\cdot | {\bf \Sigma}^P)$ leaves $H^p$ invariant. In this case, an elegant theorem of A. B. Aleksandrov \cite{Al} can be applied directly.
	
	\begin{Theorem Al}
		The conditional expectation $$\E(\cdot| \mathscr F): L^p(\T) \to L^p(\T); \quad f\mapsto \E(f|\mathscr F),$$ leaves $H^p(\T)$ invariant if and only if $\mathscr F$ is trivial or generated by an inner function vanishing at the origin. 
	\end{Theorem Al}
	
	The main task of this subsection is to show that the $\sigma$-algebra ${\bf \Sigma}^P$ in Theorem \ref{4.6} is always generated by an inner function, even when $\one \notin \Ran P$. 
	\begin{theorem}\label{4.9}
		Let $1\leq p< \infty$, $p\neq 2$. Let $\phi$ be a unit vector in $H^p$ and let $\mathscr F$ be a sub $\sigma$-algebra of ${\bf \Sigma}$. If the conditional expectation $\E_\phi(\cdot | \mathscr F)$ on $L^p(\T,|\phi|^pdm)$ leaves $H^p_\phi$ invariant, then $\mathscr F$ is trivial or $\mathscr F$ is generated by an inner function vanishing at the origin. 
	\end{theorem}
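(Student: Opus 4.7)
The plan is to reduce Theorem \ref{4.9} to Aleksandrov's theorem via the machinery of Theorem \ref{4.6}. If $\mathscr F$ is trivial there is nothing to prove, so I assume $\mathscr F$ is nontrivial and aim to produce an inner function $\eta$ vanishing at the origin with $\mathscr F = {\bf\Sigma}(\eta)$.

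First I transport the problem into the unweighted Hardy space. Since $M_\phi \colon L^p(|\phi|^p\,dm) \to L^p$ is an isometric isomorphism carrying $H^p_\phi$ onto $H^p$, the operator
\[
T := M_\phi \, \E_\phi(\cdot|\mathscr F)\, M_{1/\phi}\big|_{H^p}
\]
is, by hypothesis, a well-defined contractive projection on $H^p$ satisfying $T\phi = \phi\, \E_\phi(\one|\mathscr F) = \phi$. Thus $\phi$ is a unit vector in $\Ran T$, and Theorem \ref{4.6} applies to give the representation $T = \phi\, \E_\phi(\cdot/\phi|{\bf\Sigma}^T)|_{H^p}$, where ${\bf\Sigma}^T$ is generated by $\{f/h : f, h \in \Ran T,\ h \ne 0\}$. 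Comparing this with the original definition of $T$ forces $\E_\phi(\cdot|\mathscr F) = \E_\phi(\cdot|{\bf\Sigma}^T)$ on $H^p_\phi$, and by complex conjugation also on $\overline{H^p_\phi}$. Since $M_\phi$ transports the $L^p$-density of $H^p + \overline{H^p}$ to the $L^p(|\phi|^p\,dm)$-density of $H^p_\phi + \overline{H^p_\phi}$, the two conditional expectations agree on all of $L^p(|\phi|^p\,dm)$, whence $\mathscr F = {\bf\Sigma}^T$.

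The remaining task, which is the main obstacle, is to show this $\sigma$-algebra is generated by a single inner function vanishing at the origin. My approach would be to invoke Aleksandrov's theorem after establishing that the \emph{unweighted} conditional expectation $\E(\cdot|\mathscr F)$ also leaves $H^p$ invariant. For this I would examine the $w^*$-closed subalgebra $\mathcal{A} := H^\infty \cap L^\infty(\mathscr F)$ of $H^\infty$: using that $\Ran T$ is a module over $\mathcal{A}$ and contains $\phi$, together with the averaging identity applied to the bounded functions $\E_\phi(z^n|\mathscr F)$ (which lie in $L^\infty$ by conditional Jensen), one should show that $\mathcal{A}$ is rich enough that $\mathcal{A} + \overline{\mathcal{A}}$ is $w^*$-dense in $L^\infty(\mathscr F)$; this density statement is the heart of the proof and echoes condition (2) of Theorem \ref{1.7}. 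A standard duality argument then upgrades this to unweighted invariance of $\E(\cdot|\mathscr F)$ on $H^p$, and Aleksandrov's theorem pins down the required inner function $\eta$ with $\mathscr F = {\bf\Sigma}(\eta)$ and $\eta(0) = 0$.
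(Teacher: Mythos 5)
Your overall strategy---reduce to Aleksandrov's theorem by showing that a conditional expectation preserves analyticity, via $w^*$-density of $H^\infty(\mathscr F)+\overline{H^\infty(\mathscr F)}$ in $L^\infty(\mathscr F)$---is the same as the paper's, but the two steps you defer are precisely where the work lies, and as sketched neither closes. First, the density statement. You note that $\E_\phi(z^n|\mathscr F)\in L^\infty$, but boundedness is the trivial part; what is needed is that $\E_\phi(h|\mathscr F)$ is \emph{analytic} for $h\in H^\infty$. The hypothesis only places $\E_\phi(h|\mathscr F)$ in $H^p_\phi\cap L^\infty$, and if $\phi=\xi\cdot F$ is the inner-outer factorization then $H^p_\phi\cap L^\infty=H^\infty_\xi=\{g:\xi g\in H^\infty\}$, which still permits an inner ``denominator''. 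The paper removes it (Lemma \ref{4.10}) by the power trick: the averaging identity gives $\E_\phi(h|\mathscr F)^n=\E_\phi\big(h\,\E_\phi(h|\mathscr F)^{n-1}\big|\mathscr F\big)\in H^\infty_\xi$ for every $n$, and a divisibility argument then forces $\E_\phi(h|\mathscr F)\in H^\infty$. Once this is in hand, the $w^*$-continuity and surjectivity of $\E_\phi(\cdot|\mathscr F):L^\infty\to L^\infty(\mathscr F)$, applied to the $w^*$-dense set $H^\infty+\overline{H^\infty}$, yield the density; your module/averaging heuristic does not substitute for the analyticity step.

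Second, the passage from this density to invariance of the \emph{unweighted} expectation is not a ``standard duality argument''. The natural pairing $\int\E(f|\mathscr F)z^n\,dm=\int f\,\E(z^n|\mathscr F)\,dm$ cannot be evaluated by $w^*$-approximating $\E(z^n|\mathscr F)$ by $h_j+\overline{k_j}$ with $h_j,k_j\in H^\infty(\mathscr F)$, because $\int f\,(h_j+\overline{k_j})\,dm=f(0)h_j(0)+\la f,k_j\ra$ and $w^*$-convergence of the sum controls neither summand separately. The paper instead works at $p=2$: the density gives the orthogonal decomposition $L^2(\mathscr F)=H^2(\mathscr F)\oplus\overline{H^2_0(\mathscr F)}$ (Lemma \ref{4.11}), and since $\overline{H^2_0(\mathscr F)}\perp H^2$, the orthogonal projection $\E(\cdot|\mathscr F)$ sends $H^2$ into $H^2(\mathscr F)\subseteq H^2$; Aleksandrov's theorem is then applied at $p=2$, which suffices for every $p$ since only the $\sigma$-algebra $\mathscr F$ matters. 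Your opening reduction to $\mathscr F={\bf\Sigma}^T$ via Theorem \ref{4.6} is correct but is not needed for any of this.
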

	
	In the following content of this subsection, we always assume $1\leq p<\infty$, $p\neq 2$, $\phi$ is a unit vector in $H^p$, and $\mathscr F$ is a sub $\sigma$-algebra of ${\bf \Sigma}$ such that the conditional expectation $\E_\phi(\cdot |\mathscr F)$ leaves $H^p_\phi$ invariant. To simplify the notation, we denote by $L^p(\mathscr F)$ the space of all $\mathscr F$-measurable functions in $L^p$, and by $H^p(\mathscr F)$ the space of all $\mathscr F$-measurable functions in $H^p$. 
	
	In this subsection and the next, some standard results concerning the Nevanlinna-Smirnov class $N^+$ will be employed. We do not reproduce the definitions or results here. All relevant facts about $N^+$ used in this paper are available in the classical monograph \cite[Chapter II.5]{Gar81}.

	\begin{lemma}\label{4.10}
		The conditional expectation $\E_\phi(\cdot | \mathscr F)$ leaves $H^\infty$ invariant. In particular, $H^\infty(\mathscr F)+\ol{H^\infty(\mathscr F)}$ is $w^*$-dense in $L^\infty(\mathscr F)$.
	\end{lemma}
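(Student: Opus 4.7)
The plan is to combine the $L^\infty$-contractivity of conditional expectation, the $H^p_\phi$-invariance hypothesis, a power trick through the averaging identity, and a subharmonicity-plus-$N$-uniqueness argument. Since $\phi\neq 0$ almost everywhere on $\T$, the measures $|\phi|^p\,dm$ and $dm$ define the same essentially bounded functions; hence, for $f\in H^\infty$, the $L^\infty$-contractivity of $\E_\phi(\cdot|\mathscr F)$ gives $g:=\E_\phi(f|\mathscr F)\in L^\infty$ with $\|g\|_\infty\le\|f\|_\infty$. As $f\phi\in H^\infty\cdot H^p\subseteq H^p$ places $f\in H^p_\phi$, the invariance hypothesis yields $g\phi\in H^p$.

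The decisive step is a power trick. Using the $\mathscr F$-measurability of $g$ and the averaging identity
\[
\E_\phi(fg^{n-1}|\mathscr F)=g^{n-1}\E_\phi(f|\mathscr F)=g^n,
\]
together with $fg^{n-1}\phi=f\cdot(g^{n-1}\phi)\in H^\infty\cdot H^p\subseteq H^p$, an induction on $n$ shows $g^n\phi\in H^p$ for every $n\ge 1$. Let $u_n\in H^p$ denote the analytic extension of the boundary function $g^n\phi$, and set $G:=u_1/\phi$, a meromorphic function on $\D$ whose nontangential boundary values coincide with $g$ a.e.\ on $\T$. Both $G^n\phi$ and $u_n$ belong to the Nevanlinna class $N$ and share the same boundary values; by the standard uniqueness within $N$, $G^n\phi=u_n$ throughout $\D$, so the meromorphic function $G^n\phi$ is actually analytic for every $n$. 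A pole of $G$ at a point $z_0\in\D$ of order $k\ge 1$ would force $G^n\phi$ to have a pole of order $nk-m$, where $m$ is the (finite) multiplicity of the zero of $\phi$ at $z_0$; this is positive for large $n$ and contradicts analyticity. Therefore $G$ has no poles in $\D$, i.e., $G$ is analytic.

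Now the subharmonicity of $|u_n|^p$ yields, for each $z\in\D$,
\[
|G(z)|^{np}|\phi(z)|^p=|u_n(z)|^p\le P_z\bigl(|u_n^*|^p\bigr)\le\|g\|_\infty^{np}\,P_z\bigl(|\phi|^p\bigr),
\]
where $P_z$ denotes the Poisson integral at $z$. Taking $(np)$-th roots and letting $n\to\infty$ yields $|G(z)|\le\|g\|_\infty$ wherever $\phi(z)\neq 0$, and by analyticity of $G$ the inequality extends to all of $\D$. Hence $G\in H^\infty$ and $g=G^*\in H^\infty$, with $\|g\|_\infty\le\|f\|_\infty$.

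For the ``in particular'' assertion, the first part gives $\E_\phi(H^\infty)\subseteq H^\infty(\mathscr F)$ and $\E_\phi(\overline{H^\infty})\subseteq\overline{H^\infty(\mathscr F)}$. The classical $w^*$-density of $H^\infty+\overline{H^\infty}$ in $L^\infty(\T)$, together with the $w^*$-continuity of $\E_\phi$ on $L^\infty$ (dual to its $L^1(|\phi|^p dm)$-action) and the surjectivity $\E_\phi(L^\infty)=L^\infty(\mathscr F)$, then forces the $w^*$-density of $H^\infty(\mathscr F)+\overline{H^\infty(\mathscr F)}$ in $L^\infty(\mathscr F)$. The main obstacle is the analyticity of $G$: although the power trick quickly delivers $g^n\phi\in H^p$ for every $n$, converting this into analyticity of $G$ requires the Nevanlinna-class uniqueness step, after which the boundedness of $G$ inside $\D$ is harvested by the subharmonic majorization.
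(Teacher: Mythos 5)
Your proposal is correct, and its engine is the same as the paper's: the averaging identity yields the power trick $g^n\phi\in H^p$ for all $n$ (with $g:=\E_\phi(f|\mathscr F)\in L^\infty$ by contractivity, using that $|\phi|^p\,dm$ and $dm$ are mutually absolutely continuous), and one then passes to the limit in $n$ to strip the extra factor. Where you diverge is the finishing step. The paper first proves the identity $H^p_\phi\cap L^\infty=H^\infty_\xi$ (with $\phi=\xi F$ the inner--outer factorization) by a one-line Smirnov-class argument ($f/F\in N^+\cap L^\infty=H^\infty$), so the power trick delivers $\xi g^n\in H^\infty$ for all $n$ and the outer part is disposed of for free; what remains is a divisibility-in-the-limit argument on inner factors. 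You instead keep $\phi$ whole and argue inside the disc: bounded-type uniqueness identifies $u_1^n/\phi^{n-1}$ with $u_n$, pole counting kills the zeros of $\phi$, and the subharmonic majorization $|G(z)|^{np}|\phi(z)|^p\le\|g\|_\infty^{np}P_z(|\phi|^p)$ gives $|G|\le\|g\|_\infty$ after taking $np$-th roots. This is more hands-on but also more explicit — it makes transparent exactly why letting $n\to\infty$ works, a point the paper compresses into ``this forces $\E_\phi(f|\mathscr F)\in H^\infty$,'' and it yields the norm bound $\|g\|_\infty\le\|f\|_\infty$ as a byproduct. Two small polish points: the Nevanlinna class $N$ as usually defined consists of \emph{holomorphic} functions, so for the uniqueness step you should either invoke the meromorphic class of bounded type or, more cleanly, compare the two holomorphic functions $u_1^n$ and $\phi^{n-1}u_n$ (both in $N$, same boundary values, hence equal), which gives $G^n\phi=u_n$ without ever leaving the holomorphic setting; and after obtaining $|G|\le\|g\|_\infty$ off the zero set of $\phi$, the extension to all of $\D$ is by continuity of $G$ on the (discrete) zero set rather than by ``analyticity'' per se. The ``in particular'' part is handled exactly as in the paper.
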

	\begin{proof}
		Let $\phi = \xi \cdot F$ be the inner-outer factorization of $\phi$. We first claim that $$H^p_\phi \cap L^\infty = H^\infty_{\xi}.$$ The inclusion $H^\infty_\xi \subseteq H^p_\phi \cap L^\infty$ is clear. For the converse, suppose $g\in H^p_{\phi}\cap L^\infty$, then $g=f/\phi$ for some $f\in H^p$. Note that $f/F$ is in the Nevanlinna-Smirnov class $N^+$. As a consequence, $$\xi \cdot g = f/F \in N^+\cap L^\infty=H^\infty.$$ This proves the claim.
		Now assume $f\in H^\infty$. Recall that the multiplier algebra of $H^p_\phi$ is $H^\infty$. From the same reasoning as in the proof of \eqref{eq3.8}, we conclude that $\E_{\phi}(f|\mathscr F)^n \in H^p_\phi$ for any $n\geq 1$. Combining this with the previous claim, we obtain $$\E_{\phi}(f|\mathscr F)^n \in H^\infty_\xi,$$ for any $n\geq 1$. This forces $\E_{\phi}(f | \mathscr F) \in H^\infty$, thus proving the first part of the lemma.
		
		For the second part, we observe that the conditional expectation $\E_\phi(\cdot |\mathscr F)$ maps $L^\infty$ onto $L^\infty (\mathscr F)$, and $$\E_\phi(\cdot |\mathscr F): L^\infty \to L^\infty(\mathscr F),$$ is $w^*$-continuous. Because $H^\infty+\ol{H^\infty}$ is $w^*$-dense in $L^\infty$, the $w^*$-continutiy of $\E_\phi(\cdot |\mathscr F)$ implies $\E_\phi(H^\infty+\ol{H^\infty} \big| \mathscr F)$ is $w^*$-dense in $L^\infty (\mathscr F)$. According to the first part of this lemma and the properties of conditional expectation, we have
		\begin{align*}
			\E_\phi(H^\infty+\ol{H^\infty} \big| \mathscr F)&=\E_\phi(H^\infty\big| \mathscr F)+\E_\phi(\ol{H^\infty} \big| \mathscr F)\\
			&=\E_\phi(H^\infty\big| \mathscr F)+\ol{\E_\phi(H^\infty \big| \mathscr F)}\\
			&= H^\infty(\mathscr F)+ \ol{H^\infty(\mathscr F)}.
		\end{align*}
		Thus $H^\infty(\mathscr F)+\ol{H^\infty(\mathscr F)}$ is $w^*$-dense in $L^\infty(\mathscr F)$.
	\end{proof}
	
	We set 
	$$H_0^2(\mathscr F):= \{f\in H^2(\mathscr F): \int_\T f \,dm =0\}.$$ Clearly, $H^2_0(\mathscr F)$ is a closed subspace of $H^2_0$ and
	\begin{align}\label{eq4.3}
		\ol{H^2_0(\mathscr F)} \perp H^2.
	\end{align}
	\begin{lemma}\label{4.11}
		As Hilbert spaces, we have the orthogonal decomposition $$L^2(\mathscr F)= H^2(\mathscr F)\oplus \ol{H^2_0(\mathscr F)}.$$
	\end{lemma}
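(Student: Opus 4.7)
The orthogonality of the two summands is built into \eqref{eq4.3}, so $H^2(\mathscr F)\oplus \ol{H^2_0(\mathscr F)}$ already sits inside $L^2(\mathscr F)$ as an honest orthogonal direct sum, and the task reduces to ruling out any nonzero element of $L^2(\mathscr F)$ orthogonal to this sum. My plan is to prove the latter by pairing such an $f$ against the $w^*$-dense family supplied by Lemma \ref{4.10}.

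Let $f\in L^2(\mathscr F)$ satisfy $f\perp H^2(\mathscr F)$ and $f\perp \ol{H^2_0(\mathscr F)}$. Orthogonality against $\one$ and against bounded elements of $H^2(\mathscr F)$ gives $\int_\T f\,dm=0$ and $\int_\T f\ol{k}\,dm=0$ for every $k\in H^\infty(\mathscr F)$; orthogonality against $\ol{H^2_0(\mathscr F)}$ gives $\int_\T fh\,dm=0$ for every $h\in H^\infty_0(\mathscr F)$. Splitting an arbitrary $h\in H^\infty(\mathscr F)$ as $(h-h(0))+h(0)$ and absorbing the constant using $\int_\T f\,dm=0$, I obtain
\[
\int_\T f\bigl(h+\ol{k}\bigr)\,dm=0,\qquad \forall\, h,k\in H^\infty(\mathscr F).
\]

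The finish is then immediate from Lemma \ref{4.10}: for any $\psi\in L^\infty(\mathscr F)$ there is a net in $H^\infty(\mathscr F)+\ol{H^\infty(\mathscr F)}$ converging to $\ol{\psi}$ in the $w^*$ topology of $L^\infty$, and since $f\in L^2\subseteq L^1(\T)$ is itself a legitimate $L^1$-test function, one passes to the limit in the previous display to conclude $\la f,\psi\ra_{L^2}=0$. Density of $L^\infty(\mathscr F)$ in $L^2(\mathscr F)$ extends this to every $\psi\in L^2(\mathscr F)$, and taking $\psi=f$ forces $f=0$.

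The main conceptual point is why this indirect route is needed at all. A naive attempt — take $f\in L^2(\mathscr F)\subseteq L^2$, apply the classical splitting $L^2=H^2\oplus \ol{H^2_0}$, and check that each piece inherits $\mathscr F$-measurability — is essentially circular, since the Szeg\H{o} projection is not known a priori to preserve $L^2(\mathscr F)$; that statement is morally equivalent to the lemma itself. Lemma \ref{4.10}, whose proof genuinely used the hypothesis that $\E_\phi(\cdot|\mathscr F)$ leaves $H^p_\phi$ invariant, is the correct substitute: it supplies $\mathscr F$-measurable analytic-plus-antianalytic approximants, which is all that is needed once the two elementary orthogonality conditions have pinned down $f$ on the algebraic sum $H^\infty(\mathscr F)+\ol{H^\infty(\mathscr F)}$.
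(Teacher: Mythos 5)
Your proof is correct and follows the same route as the paper: both reduce the statement to showing that any $f\in L^2(\mathscr F)$ orthogonal to the (already orthogonal) sum vanishes, observe that this forces $f$ to annihilate $H^\infty(\mathscr F)+\ol{H^\infty(\mathscr F)}$, and then invoke the $w^*$-density from Lemma \ref{4.10} together with $f\in L^1(\T)$ to conclude $f=0$. The only cosmetic difference is that you re-derive the inclusion $H^\infty(\mathscr F)+\ol{H^\infty(\mathscr F)}\subseteq H^2(\mathscr F)\oplus\ol{H^2_0(\mathscr F)}$ by splitting off the mean, which the paper states directly.
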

	
	\begin{proof}
		Since $\ol{H^2_0(\mathscr F)} \perp H^2(\mathscr F)$, their direct sum $M$ is a closed subspace of $L^2(\mathscr F)$. To prove $M = L^2(\mathscr F)$, it is sufficient to show that for any $f \in L^2(\mathscr F)$, if $f \perp M$, then $f = 0$. Since $M$ contains $H^\infty(\mathscr F) + \ol{H^\infty(\mathscr F)}$, we have
		\[
		\int_\T f \, \ol{g} = 0, \quad \forall g \in H^\infty(\mathscr F) + \ol{H^\infty(\mathscr F)}.
		\]
		By Lemma \ref{4.10}, the $w^*$-density of $H^\infty(\mathscr F) + \ol{H^\infty(\mathscr F)}$ implies that $f = 0$.
	\end{proof}
	
	\begin{proof}[Proof of Theorem \ref{4.9}]
		Since $\E(\cdot |\mathscr F): L^2\to L^2(\mathscr F)$ is the orthogonal projection, it follows from Lemma \ref{4.11} and \eqref{eq4.3} that $\E(f| \mathscr F)\in H^2(\mathscr F)$ for $f\in H^2$. Thus $\E(\cdot |\mathscr F)$ leaves $H^2$ invariant, and the proof is completed by applying Aleksandrov's theorem. 
	\end{proof}

	\subsection{Step III: The Construction of $\varphi$}
	
	Now, we proceed to the final step in the proof of Theorem \ref{4.5}, beginning with an outline of three lemmas for future reference.
	
	\begin{lemma}\label{4.13}
		Let $\eta$ be an inner function with $\eta(0)=0$. Suppose $F\in N^+$ is an outer function. Then $F$ is $\eta$-measurable if and only if $|F|$ is $\eta$-measurable.
	\end{lemma}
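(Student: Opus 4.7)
The forward implication is immediate from the definition of measurability; the interesting direction is the converse. My strategy will be to reconstruct the outer function $F$ from $|F|$ via the canonical exponential formula and then to recognize that, when $|F|$ factors through $\eta$, the reconstructed $F$ automatically factors through $\eta$ as well (up to a unimodular scalar).

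To begin, since $|F|$ is $\eta$-measurable and $\eta:\T\to \T$ is a measurable map, the standard factorization theorem for $\sigma$-algebras generated by a map furnishes a measurable $g:\T\to [0,\infty)$ with $|F|=g\circ\eta$ a.e.\ on $\T$. Assuming $F\not\equiv 0$ (the zero case is trivial), the hypothesis $F\in N^+$ forces $\log|F|\in L^1(\T)$. A crucial input at this point is L\"owner's theorem: an inner function $\eta$ with $\eta(0)=0$ preserves normalized Lebesgue measure $m$ on $\T$. This immediately yields $\log g\in L^1(\T)$.

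Next, I will define the outer function $G$ on $\D$ with $|G|=g$ on $\T$ via the standard exponential of the Poisson integral of $\log g$. The key step is to verify that $G\circ\eta$ is itself outer in $N^+$. Membership in $N^+$ will follow from the general fact that composition with an inner function preserves $N^+$. For the outer property, note that $\eta(0)=0$ gives $(G\circ\eta)(0)=G(0)$, and L\"owner's measure-preservation gives
$$\int_\T \log|G\circ\eta|\,dm=\int_\T \log g\circ\eta\,dm=\int_\T \log g\,dm=\log|G(0)|=\log|(G\circ\eta)(0)|,$$
which is precisely the outer characterization. This invariance of outerness under composition with $\eta$ is the main subtlety of the proof, and depends essentially on both $\eta(0)=0$ and L\"owner's theorem.

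Having shown that $F$ and $G\circ\eta$ are two outer functions in $N^+$ sharing the same boundary modulus $|F|=g\circ\eta=|G\circ\eta|$, the uniqueness of outer factors up to a unimodular constant yields $F=c\,(G\circ\eta)$ for some $c\in\T$. Since $G\circ\eta$ is visibly $\eta$-measurable, so is $F$, completing the argument.
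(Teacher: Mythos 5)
Your argument is correct, but it takes a genuinely different route from the paper. The paper reconstructs $F$ through the formula $F=c\exp\big(\log|F|+i\cal H(\log|F|)\big)$ and reduces everything to showing that the Hilbert transform preserves $\eta$-measurability; this is done by observing that $\E(\cdot|\eta)$ leaves $H^2$ and $\ol{H^2_0}$ invariant, hence commutes with the Riesz projection and with $\cal H$, followed by an $L^2\to L^1$ approximation using the weak-type $(1,1)$ bound. You instead descend $|F|$ to a function $g$ on $\T$ via the Doob--Dynkin factorization, build the outer function $G$ with modulus $g$ downstairs, and lift it back by composition, using L\"owner's measure-preservation (equivalent to the orthonormality of $\{\eta^k\}$ that also underlies the paper's commutation argument) to get $\log g\in L^1$ and the outerness of $G\circ\eta$ from the mean-value characterization of outer functions. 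Your route is arguably more structural: it yields the stronger explicit conclusion $F=c\,(G\circ\eta)$ with $G$ outer, and avoids the conjugate-function machinery entirely. Its price is that you quietly invoke two facts the paper's proof does not need: that composition with an inner function maps $N^+$ into $N^+$, and that the boundary values satisfy $(G\circ\eta)^*=G^*\circ\eta^*$ almost everywhere (this last identity is what justifies both $|G\circ\eta|=g\circ\eta=|F|$ on $\T$ and the $\eta$-measurability of $G\circ\eta$). Both facts are standard, but in a written version you should state them and give a reference, since the a.e.\ composition of boundary functions is exactly the kind of step that looks obvious and is not.
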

	
	The proof of Lemma \ref{4.13} involves some results about the Hilbert transform. For a function $f\in L^1$, let $\cal H f$ be the Hilbert transform of $f$ defined by 
	$$\cal H f(e^{i\theta})= \text{p.v.}\,\frac{1}{2\pi}\int_{-\pi}^{\pi} f(e^{it})\cdot \cot\bigg(\frac{\theta-t}{2}\bigg)dt.$$ Indeed, this singular integral is well defined for almost every $\theta\in[-\pi,\pi]$. We need the following two properties of $\cal H$. 
	\begin{enumerate}
		\item $\cal H: L^2\to L^2$ is bounded, and $$I+i\cal H = 2\cal R - \E,$$ where $\cal R: L^2\to H^2$ is the Riesz projection and $\E f=\int_\T f \,dm$ is the expectation.
		\item If $f_n$ converges to $f$ in $L^1$, then there exists a subsequence of $\{\cal H f_n\}$ that converges to $\cal Hf$ almost everywhere. 
	\end{enumerate}
	We remark that (2) can be deduced by the weak-type (1,1) boundedness of $\cal H$, or by Kolmogorov's theorem, which asserts that $\cal H: L^1\to L^q$ is bounded when $q<1$.
	
	\begin{proof}[Proof of Lemma \ref{4.13}]
		We only need to prove $|F|$ is $\eta$-measurable implies $F$ is $\eta$-measurable. As $F$ is an outer function in $N^+$, $\log|F|\in L^1$ and 
		
		$$F(z)=c\cdot \exp\bigg(  \frac{1}{2\pi} \int_{-\pi}^\pi \frac{e^{it}+z}{e^{it}-z} \log |F(e^{it})| dt \bigg), \quad z\in \D.$$
		where $c$ is a unimodular constant. Taking radial limits on both sides (see \cite[pp.78]{Ho62} and \cite[Theorem 11.5.1]{Ga07}), we obtain 
		$$F=c\cdot \exp\bigg( \log|F| +i \cal H(\log |F|) \bigg).$$ To complete the proof, it is sufficient to show $\cal H(\log |F|)$ is $\eta$-measurable.
		
		Observe that $\E(\cdot|\eta)$ leaves both $H^2$ and $\ol{H^2_0}$ invariant. Thus, as operators on $L^2$, $\E(\cdot|\eta)$ commutes with the Riesz projection $\cal R$. Since $\E(\cdot|\eta)$ also commutes with $\E$, it follows from (1) that $\E(\cdot|\eta)$ commutes with $\cal H$. In particular, $\cal H$ leaves $L^2({\bf \Sigma(\eta)})$ invariant. 
		
		Since $\log |F| \in L^1({\bf \Sigma}(\eta))$, there exists a sequence of functions $\{g_n\}$ in $L^2({\bf \Sigma}(\eta))$, such that $g_n\to \log |F|$ in $L^1$. As a consequence of (2), there exists a subsequence of $\{\cal Hg_n\}$ that converges to $\cal H(\log |F|)$ almost everywhere. Note that every $\cal Hg_n$ is $\eta$-measurable, so is $\cal H(\log|F|)$. The proof is completed.
	\end{proof}
	
	The next lemma is about inner functions. According to Beurling's theorem, every nonzero closed shift-invariant subspace of $H^2$ is of the form $\chi H^2$, where $\chi$ is an inner function. Furthermore, this representation is unique in the sense that if $\chi H^2=\chi'H^2$, then the quotient $\chi'/\chi$ is a unimodular constant. For two inner functions $\chi$ and $\chi'$, we say that $\chi$ {\it divides} $\chi'$ if $\chi'/\chi$ is an inner function, or equivalently, the corresponding shift-invariant subspaces satisfy $\chi'H^2 \subseteq  \chi H^2$. Let $\mathfrak F$ be a family of inner functions, then $$\bigcap_{\mathfrak F\subseteq \chi H^2} \chi H^2$$ is a nonzero closed shift-invariant subspace. By Beurling's theorem, there exists a unique inner function(up to a unimodular constant), denoted by $ \bf{gcd} \,\mathfrak F$, such that $$\bigcap_{\mathfrak F\subseteq \chi H^2} \chi H^2 = {\bf gcd} \,\mathfrak F \cdot H^2.$$
	It is easy to verify that 
	\begin{enumerate}
		\item $\bf{gcd} \,\mathfrak F$ divides any function in $\mathfrak F$.
		\item If $\chi$ is an inner function which divides every function in $\mathfrak{F}$, then $\chi$ divides $\bf{gcd}\, \mathfrak{F}$.
	\end{enumerate}
	We shall call \( \mathbf{gcd} \, \mathfrak{F} \) the \textit{greatest common divisor} of \( \mathfrak{F} \); see \cite[pp.85]{Ho62} for a function-theoretic proof of the existence of \( \mathbf{gcd} \, \mathfrak{F} \).

	Observe ${\bf gcd} \,\mathfrak F \cdot H^2$ is the smallest closed shift-invariant subspace containing $\mathfrak F$. So we have
	\begin{align}\label{eq4.4}
		{\bf gcd} \,\mathfrak F \cdot H^2 = \ol{\sum_{\chi \in \mathfrak F} \chi H^2}^{H^2},
	\end{align}
	where $$\sum_{\chi \in \mathfrak F} \chi H^2:=\bigg\{ \sum_{j=1}^N \chi_j h_j: \chi_j\in \mathfrak{F}, h_j\in H^2, N\in \mathbb N \bigg\}.$$

	\begin{lemma}\label{4.14}
		Suppose $\mathfrak F$ is a family of inner functions, $\eta$ is an inner function with $\eta(0)=0$. Define $$\mathfrak F\circ \eta:= \{\chi\circ\eta: \chi \in \mathfrak{F}\}.$$ Then $${\bf gcd}\,(\mathfrak{F}\circ \eta) = ({\bf gcd}\,\mathfrak{F})\circ \eta.$$
	\end{lemma}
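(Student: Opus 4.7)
The plan is to prove the two divisibilities $({\bf gcd}\,\mathfrak{F})\circ \eta \mid {\bf gcd}(\mathfrak{F}\circ \eta)$ and ${\bf gcd}(\mathfrak{F}\circ \eta)\mid ({\bf gcd}\,\mathfrak{F})\circ \eta$ separately, with equation \eqref{eq4.4} as the main tool. The key technical fact I would use is that composition with an inner function $\eta$ vanishing at the origin is an isometry $C_\eta: H^2 \to H^2$; this is immediate once one notes that $\{\eta^n\}_{n\geq 0}$ is orthonormal in $H^2$ (which in turn follows because $\int_\T \eta^n\overline{\eta^m}\,dm = \eta(0)^{n-m}=0$ for $n>m$), so the expansion $f=\sum a_k z^k$ gives $\|f\circ \eta\|_2^2 = \sum |a_k|^2 = \|f\|_2^2$.

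The first (easy) direction goes as follows. If $\chi_1,\chi_2$ are inner and $\chi_1 \mid \chi_2$, then writing $\chi_2=\chi_1 \chi_3$ with $\chi_3$ inner and composing with $\eta$ shows $\chi_1\circ \eta \mid \chi_2\circ \eta$, since the composition of two inner functions is inner. Because ${\bf gcd}\,\mathfrak{F}$ divides each $\chi\in \mathfrak{F}$, it follows that $({\bf gcd}\,\mathfrak{F})\circ \eta$ divides every element of $\mathfrak{F}\circ \eta$, and hence divides ${\bf gcd}(\mathfrak{F}\circ \eta)$ by the defining property of the greatest common divisor.

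For the reverse direction, I would invoke \eqref{eq4.4} to realize ${\bf gcd}\,\mathfrak{F}$ as a limit in $H^2$ of elements of the form $\sum_{j=1}^{N_n}\chi_{j,n} h_{j,n}$ with $\chi_{j,n}\in \mathfrak{F}$ and $h_{j,n}\in H^2$. Applying the isometry $C_\eta$ yields the convergent sequence $\sum_{j=1}^{N_n}(\chi_{j,n}\circ \eta)(h_{j,n}\circ \eta) \to ({\bf gcd}\,\mathfrak{F})\circ \eta$ in $H^2$. Since each $h_{j,n}\circ \eta\in H^2$, the approximants lie in $\sum_{\chi\in \mathfrak{F}}(\chi\circ \eta)H^2$, so the limit lies in its $H^2$-closure, which by \eqref{eq4.4} equals ${\bf gcd}(\mathfrak{F}\circ \eta)\cdot H^2$. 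This gives the desired divisibility.

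I do not anticipate a serious obstacle; the only subtlety is remembering that the role of $\eta(0)=0$ is to make $C_\eta$ norm-preserving (so that the approximation survives composition with $\eta$), and that it simultaneously forces $\chi\circ \eta$ to be inner whenever $\chi$ is. Combining the two divisibilities, and using the uniqueness of the greatest common divisor up to a unimodular constant, gives the stated equality.
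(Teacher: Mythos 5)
Your proposal is correct and follows essentially the same route as the paper: the easy divisibility from the fact that $C_\eta$ preserves divisibility of inner functions, and the reverse divisibility by applying the isometry $C_\eta$ to the Beurling-type characterization \eqref{eq4.4} to place $({\bf gcd}\,\mathfrak{F})\circ\eta$ inside ${\bf gcd}\,(\mathfrak{F}\circ\eta)\cdot H^2$. The paper phrases this second step as an inclusion of shift-invariant subspaces rather than a limit of approximants, but the content is identical.
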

	
	\begin{proof}
		Since $\eta$ is an inner function vanishing at the origin, the composition operator $$C_\eta: H^2\to H^2({\bf \Sigma(\eta)})$$ is an isometric isomorphism that carries $\mathfrak{F}$ onto $\mathfrak{F}\circ{\eta}$. Thus $({\bf gcd}\, \mathfrak{F})\circ \eta$ divides every inner function in $\mathfrak F\circ \eta$, and hence $({\bf gcd}\, \mathfrak{F})\circ \eta$ divides ${\bf gcd}\, (\mathfrak{F}\circ \eta)$. For the converse, considering $C_\eta$ acting on both sides of \eqref{4.4}, we obtain
		$$({\bf gcd}\,\mathfrak{F})\circ \eta\cdot H^2({\bf \Sigma(\eta)})= \ol{  \sum_{\chi\in \mathfrak{F}}  \chi\circ\eta \cdot H^2 (\bf \Sigma(\eta))}^{H^2}.$$ Note that the right-hand side is included in $$\ol{  \sum_{\chi\in \mathfrak{F}}  \chi\circ\eta \cdot H^2 }^{H^2} = \ol{  \sum_{\chi' \in \mathfrak{F}\circ \eta}  \chi' \cdot H^2 }^{H^2} \stackrel{\eqref{eq4.4}}{=} {\bf gcd} \,(\mathfrak F\circ \eta) \cdot H^2.$$ Since $\one \in H^2(\bf \Sigma(\eta))$, we conclude that $$({\bf gcd}\,\mathfrak{F})\circ \eta \in {\bf gcd} \,(\mathfrak F\circ \eta) \cdot H^2.$$ This shows $({\bf gcd} \,\mathfrak F)\circ \eta$ divides ${\bf gcd}\,(\mathfrak{F}\circ \eta)$, as desired.
	\end{proof}
	
	The final lemma is an easy consequence of the properties of conditional expectation.
	\begin{lemma}\label{4.15}
		Let $\E(\cdot|{\bf \Sigma}')$ be the conditional expectation on $L^1(\mu)$ corresponding to the sub $\sigma$-algebra ${\bf \Sigma}'$. If $K$ is a positive function so that $K\in L^1(\mu)$ and $\log K\in L^1(\mu)$, then $K/\E(K |{\bf \Sigma}') \in L^1(\mu)$ and $\log\E(K|{\bf \Sigma}')\in L^1(\mu)$. 
	\end{lemma}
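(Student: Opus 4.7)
The plan is to treat the two assertions separately, each by a one-line application of a standard tool: the tower property for the first, and conditional Jensen combined with the elementary inequality $\log x \leq x-1$ for the second.

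First I would observe that the hypothesis $\log K\in L^1(\mu)$ forces $K>0$ almost everywhere, so $\E(K|{\bf \Sigma}')>0$ almost everywhere and the quotient $K/\E(K|{\bf \Sigma}')$ is well-defined. Since $1/\E(K|{\bf \Sigma}')$ is ${\bf \Sigma}'$-measurable, the averaging identity gives
\[
\E\!\left(\frac{K}{\E(K|{\bf \Sigma}')}\,\bigg|\,{\bf \Sigma}'\right)=\frac{\E(K|{\bf \Sigma}')}{\E(K|{\bf \Sigma}')}=\one,
\]
and integrating shows that $K/\E(K|{\bf \Sigma}')\in L^1(\mu)$ with norm $1$. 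This settles the first claim.

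For the second claim I would split $\log \E(K|{\bf \Sigma}')$ into positive and negative parts. For the positive part I use $\log x\leq x-1\leq x$ (valid for all $x>0$), giving
\[
\bigl(\log \E(K|{\bf \Sigma}')\bigr)^+\leq \E(K|{\bf \Sigma}'),
\]
whose $L^1$-norm equals $\|K\|_1<\infty$. For the negative part I apply the conditional Jensen inequality to the concave function $\log$, obtaining $\log \E(K|{\bf \Sigma}')\geq \E(\log K\,|\,{\bf \Sigma}')$, hence
\[
\bigl(\log \E(K|{\bf \Sigma}')\bigr)^-\leq \bigl(\E(\log K\,|\,{\bf \Sigma}')\bigr)^-\leq \E\bigl((\log K)^-\,\bigr|\,{\bf \Sigma}'\bigr),
\]
where the last step uses Jensen for the convex function $x\mapsto x^-$. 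Integrating the right-hand side yields $\int(\log K)^-\,d\mu<\infty$ by the assumption $\log K\in L^1(\mu)$.

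I do not anticipate a substantive obstacle here; the only care needed is the a.e.\ positivity of $K$ (to guarantee that the quotient and the logarithms are meaningful), which is immediate from integrability of $\log K$.
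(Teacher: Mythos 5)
Your proof is correct and follows essentially the same route as the paper's: the first claim is the computation $\int K/\E(K|{\bf \Sigma}')\,d\mu=1$ (the paper phrases the same identity via the chain rule for Radon--Nikodym derivatives, you via the pull-out property, which for nonnegative functions is legitimate even before integrability is known), and the second claim is handled by the identical splitting into positive and negative parts, with the elementary bound $\log_+ x\leq x$ and conditional Jensen for the concave negative part.
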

	\begin{proof}
		By the chain rule for the Radon-Nikodym derivative, it holds that $$\frac{1}{\E(K|{\bf \Sigma}')}=\E_K(1/K |{\bf \Sigma}')\in L^1(Kd\mu),$$ where $\E_K(\cdot |{\bf \Sigma}')$ is the corresponding conditional expectation on $L^1(Kdm)$. This implies $K/\E(K|{\bf \Sigma}')\in L^1(d\mu).$ 
		
		Next, we show both $\log_+\E(K|{\bf \Sigma}')$ and $\log_-\E(K|{\bf \Sigma}')$ are integrable, where $$\log_+ x:=\max\{\log x, 0\}, \quad \log_-(x):= \min\{\log x, 0\},\quad x>0.$$
		Observe that
		$$
		\log_+\E(K|{\bf \Sigma}')\leq \log_+(1+\E(K|{\bf \Sigma}'))=\log(1+\E(K|{\bf \Sigma}'))\leq \E(K|{\bf \Sigma}').
		$$
		Thus $\log_+\E(K|{\bf \Sigma}')\in L^1(\mu)$. Since $\log_- x$ is concave, applying Jensen's inequality we have
		$$
		\log_-\E(K|{\bf \Sigma}')\geq \E(\log_-K|{\bf \Sigma}').
		$$
		This ensures $\log_-\E(K|{\bf \Sigma}') \in L^1(\mu)$.
	\end{proof}
	
	Now we are ready to prove Theorem \ref{4.5}.
	
	\begin{proof}[Proof of Theorem \ref{4.5}]
		Suppose $1\leq p< \infty$, $p\neq 2$ and $P: H^p\to H^p$ is a contractive projection. From Theorem \ref{4.6}, for any fixed norm one function $\phi\in \Ran P$, $P$ admits the representation $$Pf = \phi \E_\phi (f/\phi| {\bf \Sigma}^P), \quad f\in H^p,$$ where ${\bf \Sigma}^P$ is the $\sigma$-algebra generated by all functions of the form $f/h$, with $f,h\in \Ran P$, $h\neq 0$. Because $P$ maps $H^p$ into itself, $\E_\phi(\cdot |{\bf \Sigma}^P)$ must leave $H^p_\phi$ invariant. Applying Theorem \ref{4.9}, we conclude that the $\sigma$-algebra ${\bf \Sigma}^P$ is trivial or generated by an inner function vanishing at the origin. If we are in the first case, then by the definition of ${\bf \Sigma}^P$, $P$ is of rank one. So without loss of generality, we assume in the following that ${\bf \Sigma}^P$ is generated by an inner function $\eta$ with $\eta(0)=0$.

		Next, we will show there exists a function $\varphi\in \Ran P$, such that $(\eta,\varphi)\in \X_p$. Once the existence of such $\varphi$ is established, we can choose $\phi=\varphi$ in Theorem \ref{4.6}, which yields $$P=\varphi \E_\varphi(\cdot /\varphi |\eta),$$ and then Theorem \ref{4.5} follows immediately. 
		
		From now on, we will fix a function $\phi\in \Ran P$ with $\|\phi\|_p=1$. With this choice, $P$ admits the representation:
		$$Pf = \phi\E_\phi(f/\phi |\eta), \quad \forall f\in H^p.$$
		In particular, we have 
		\begin{align}\label{eq4.5}
			\Ran P=\{f\in H^p: f/\phi \,\,{\rm is}\, \,{\eta}-{\rm measurable}\}.
		\end{align}
		
		Suppose $\phi= \theta \cdot G $ is the inner-outer factorization of $\phi$. We consider the outer function 
		\begin{align}\label{eq4.6}
			F(z):=\exp\bigg( \frac{1}{2\pi}\int_{-\pi}^{\pi} \frac{e^{it}+z}{e^{it}-z} \cdot \log  \bigg| \frac{|\phi|}{(\E(|\phi|^p| \eta))^{1/p}}\bigg| dt \,\bigg).
		\end{align}
		In the virtual of Lemma \ref{4.15}, $F$ is well defined and $F\in H^p$, with boundary value 
		$$|F|^p= \frac{|\phi|^p}{\E(|\phi|^p| \eta)}.$$ 
		
		On the other hand, we consider the outer function $H$ determined by $|H|=|\E(\theta|\eta)|$. By Lemma \ref{4.13}, $H$ is $\eta$-measurable. Next we define
		$$\mathfrak{F}:= \{\chi: \chi \,\,{\rm is\,\,inner},\,\chi/\theta  \,\,{\rm is}\, \,{\eta}-{\rm measurable} \}.$$ Note that $\mathfrak{F}$ is nonempty as $\theta \in \mathfrak{F}$. For any $\chi \in \mathfrak{F}$, since $\chi/\theta$ is $\eta$-measurable, applying the averaging identity we deduce that $\E(\chi|\eta)=\chi /\theta \cdot \E(\theta|\eta)$. As a consequence,
		\begin{align}\label{eq4.7}
			\theta\cdot\E(\chi|\eta)=\chi \cdot \E(\theta|\eta),\quad \forall \chi\in \mathfrak{F}.
		\end{align}
		Note that \eqref{eq4.7} implies for any $\chi \in \mathfrak{F}$, the outer part of $\E(\chi|\eta)$ is $H$. Thus, the quotient $\E(\chi|\eta)/H$ is an $\eta$-measurable inner function. This means, for any $\chi\in \mathfrak{F}$, there exists an inner function $\widetilde{\chi}$ depending on $\chi$, such that the inner-outer factorization of $\E(\chi |\eta)$ reads as
		\begin{align}\label{eq4.8}
			\E(\chi |\eta)= \widetilde{\chi} \circ \eta \cdot H.
		\end{align}
		Substituting \eqref{eq4.8} into \eqref{eq4.7}, we obtain 
		\begin{align}\label{eq4.9}
			\theta \cdot \widetilde{\chi} \circ \eta = \chi \cdot \widetilde{\theta} \circ \eta, \quad \forall \chi \in \mathfrak{F}.
		\end{align}
		Set $$\widetilde{\mathfrak{F}}:=\{\widetilde{\chi}: \chi\in \mathfrak{F}\}.$$
		Considering the greatest common divisors on both side of \eqref{eq4.9}, and applying Lemma \ref{4.14}, we deduce that
		\begin{align*}
			\theta\cdot ({\bf gcd}\, \widetilde{\mathfrak{F}})\circ \eta=\theta\cdot {\bf gcd}\, (\widetilde{\mathfrak{F}}\circ \eta) = \widetilde{\theta}\circ\eta \cdot {\bf gcd}\,\mathfrak{F}.
		\end{align*}
		In particular, the function $$\frac{{\bf gcd}\,\mathfrak{F}}{\theta }= \frac{({\bf gcd}\, \widetilde{\mathfrak{F}})\circ \eta}{\widetilde{\theta}\circ\eta} $$ is $\eta$-measurable. This shows ${\bf gcd}\,\mathfrak{F}\in \mathfrak{F}$. 
		
		Now we put $\xi:= {\bf gcd}\,\mathfrak{F},$ and $$\varphi:=\xi \cdot F,$$ where $F$ is defined in \eqref{eq4.6}. In fact, this $\varphi$ is what we desired.
		
		Let us check first that $\varphi \in \Ran P$. Indeed, because $\xi\in \mathfrak F$, by our definition of $\mathfrak F$, $\xi/\theta$ is $\eta$-measurable. On the other hand, note that the quotient of outer functions $F/G$ is an outer function in $N^+$, and $$|F/G|=\E(|\phi|^p|\eta)^{-1/p}$$ is $\eta$-measurable. By Lemma \ref{4.13}, $F/G$ is $\eta$-measurable. We deduce that $$\frac{\varphi}{\phi}=\frac{\xi}{\theta} \cdot \frac{F}{G}$$ is $\eta$-measurable. Combining this with \eqref{eq4.5}, we obtain $\varphi\in \Ran P$. 
		
		It remains to prove $(\eta,\varphi)\in \X_p$. It is clear that $(\eta,\varphi)$ satisfies the conditions (1) and (2) in Definition \ref{4.1}. We only need to show $\xi\cdot F^{p/2}\perp \eta H^2.$ Let $q$ be an analytic polynomial. Since $\E_\varphi(q/\xi |\eta)\in L^\infty({\bf \Sigma(\eta)})$, there exists a function $h\in L^\infty$ such that $$\E_\varphi(q/\xi |\eta)=h\circ \eta.$$ 
		We claim that $h\in H^\infty$. Indeed, by Theorem \ref{4.6}, $$ \varphi\cdot h\circ \eta=\varphi \E_{\varphi}(q/\xi |\eta)=P(qF) \in \Ran P.$$ If we write 
		\begin{align}\label{eq4.10}
			\varphi\cdot h\circ \eta = \alpha \cdot K,
		\end{align}
		where $\alpha$ is inner and $K$ is outer, then it follows from \eqref{eq4.5} that $$\frac{\alpha \cdot K}{\theta\cdot G}$$ is $\eta$-measurable. Combining this with Lemma \ref{4.13}, we deduce that $K/G$ is $\eta$-measurable, and hence $\alpha/\theta$ is $\eta$-measurable. This means $\alpha \in \mathfrak F.$ Recall that $\xi ={\bf gcd}\,\mathfrak{F}$, so $\xi$ divides $\alpha$. Substituting $\varphi=\xi \cdot F$ into \eqref{eq4.10}, we see 
		$$h\circ \eta = \frac{\alpha}{\xi}\frac{K}{F}.$$ Because $\xi$ divides $\alpha$, we conclude that $$h\circ \eta \in N^+\cap L^\infty = H^\infty.$$ This forces $h\in H^\infty$. Our claim follows.
		
		Our construction of $F$ gives that 
		$$\E(|\varphi|^p |\eta)=\one,$$ 
		and so $\{\eta^k: k\in \mathbb Z\}$ is an orthonormal system in $L^2(|\varphi|^p dm)$. Together with the previous claim, we have
		\begin{align*}
			\la \eta \cdot q F^{p/2}, \xi F^{p/2}\ra&=\int_\T \eta \cdot q/\xi \, |\varphi|^p dm\\
			&=\int_{\T}  \eta \cdot \E_{\varphi}(q/\xi |\eta)\, |\varphi|^p dm \\
			&= \int_\T \eta \cdot h\circ \eta \, |\varphi|^p dm \\
			&=0.
		\end{align*}
		Since $F^{p/2}$ is an outer function in $H^2$, $\{q\cdot F^{p/2}: q\in \mathbb C[z]\}$ is dense in $H^2$. This implies $\xi \cdot F^{p/2} \perp \eta H^2$. The proof of Theorem \ref{4.5} is completed.
	\end{proof}

	\section{Appendix}
	
	As mentioned in the introduction, we now present the details of how to deduce the Lancien-Randrianantoanina-Ricard theorem and its codimensional version (Corollary \ref{1.5}) from Theorem \ref{1.6}. In fact, we obtain the following more general result.

	\begin{theorem}\label{5.1}
		Let $1\leq p<\infty, p\neq 2$, and let $(\Omega, \bf \Sigma, \mu)$ be a complete probability space. Assume $X$ is a closed subspace of $L^p(\mu)$ satisfying:
		\begin{enumerate}
			\item For any non-zero function $f\in X$, $\mu\big(\{f=0\}\big)=0$.
			\item The intersection of $\mathcal{M}(X)+\overline{\mathcal{M}(X)}$ and the closed unit ball of $L^\infty$ is w*-dense in the closed unit ball of $L^\infty(\mu)$.
		\end{enumerate}
		Then $X$ admits no nontrivial 1-complemented subspace of finite dimension. If we further assume $(\Omega, \bf \Sigma, \mu)$ is atomless, then $X$ admits no proper 1-complemented subspace of finite codimension.
	\end{theorem}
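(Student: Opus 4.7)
The plan is to use the transfer technique of Section 4.1 to reduce $P$ to a projection that fixes the constants, invoke Theorem \ref{1.6} or Theorem \ref{1.7} to express this normalized projection as a conditional expectation, and then use conditions (1) and (2) to constrain the resulting sub-$\sigma$-algebra. Given a contractive projection $P$ with range $M$, I would pick any unit vector $\phi\in M$; by (1) $\phi\ne 0$ almost everywhere, so $|\phi|^p d\mu$ is a probability measure sharing the null sets of $\mu$. Setting $X_\phi:=\{g\in L^p(|\phi|^p d\mu):g\phi\in X\}$ and $P_\phi:=M_{1/\phi}PM_\phi|_{X_\phi}$, a direct verification gives $\M(X_\phi)=\M(X)$; the reweighting $g\leftrightarrow g|\phi|^p$ identifies the $L^1(\mu)$--$L^\infty(\mu)$ duality with the $L^1(|\phi|^p d\mu)$--$L^\infty(|\phi|^p d\mu)$ duality, so the two $w^*$-topologies coincide and condition (2) transfers to $X_\phi$. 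Then Theorem \ref{1.6} (when $p\notin 2\mathbb N$) or Theorem \ref{1.7} (when $p\in 2\mathbb N\setminus\{2\}$) yields $P_\phi=\E_\phi(\,\cdot\,|{\bf \Sigma}')|_{X_\phi}$, where ${\bf \Sigma}'$ is generated by $\Ran P_\phi=M/\phi=X_\phi\cap L^p({\bf \Sigma}',|\phi|^p d\mu)$.

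For the first claim, assume $\dim M<\infty$. Conditional expectation is $w^*$-continuous and commutes with conjugation, so condition (2) forces $\E_\phi(\M(X_\phi)|{\bf \Sigma}')+\ol{\E_\phi(\M(X_\phi)|{\bf \Sigma}')}$ to be $w^*$-dense in $L^\infty({\bf \Sigma}')$. Since this image sits in the finite-dimensional (hence $w^*$-closed) space $(M/\phi\cap L^\infty)+\ol{(M/\phi\cap L^\infty)}$, we conclude that $L^\infty({\bf \Sigma}')$ is finite-dimensional; so ${\bf \Sigma}'$ has finitely many atoms $E_1,\ldots,E_k$. Noting that $\one\in M/\phi$ (since $P_\phi\one=\one$) and that $M/\phi$ generates ${\bf \Sigma}'$, for $k\ge 2$ one can find $v\in M/\phi$ taking distinct values on two atoms $E_i,E_j$; then $w:=v-v|_{E_i}\,\one\in M/\phi$ vanishes on $E_i$ but not on $E_j$, so $\phi w\in M\subseteq X$ is a nonzero element of $X$ vanishing on the positive-measure set $E_i$, contradicting (1). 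Hence $k=1$, ${\bf \Sigma}'$ is trivial, and $\dim M=1$.

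For the second claim, assume $\mu$ is atomless and $\mathrm{codim}_X M=n<\infty$; properness of $M$ excludes ${\bf \Sigma}'={\bf \Sigma}$. Picking an $n$-dimensional complement $V\subseteq X_\phi$ of $M/\phi$, one obtains $\M(X_\phi)+\ol{\M(X_\phi)}\subseteq L^p({\bf \Sigma}',|\phi|^p d\mu)+V+\ol V$, a norm-closed---hence weakly closed---subspace of $L^p(|\phi|^p d\mu)$. For any $b$ in the closed unit ball of $L^\infty$, condition (2) provides a $w^*$-approximating net in the unit ball of $\M(X_\phi)+\ol{\M(X_\phi)}$; a H\"older estimate ($h\in L^{p'}(|\phi|^p d\mu)$ implies $h|\phi|^p\in L^1(\mu)$) converts bounded $w^*$-convergence in $L^\infty(\mu)$ into weak convergence in $L^p(|\phi|^p d\mu)$, placing $b$ in the subspace above. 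Scaling yields $L^\infty(\mu)\subseteq L^p({\bf \Sigma}',|\phi|^p d\mu)+W$ with $\dim W\le 2n$, so $L^\infty(\mu)/L^\infty({\bf \Sigma}')$ embeds into a finite-dimensional quotient of $L^p(|\phi|^p d\mu)/L^p({\bf \Sigma}',|\phi|^p d\mu)$. This contradicts the standard fact that, for atomless $\mu$ and proper ${\bf \Sigma}'$, the quotient $L^\infty(\mu)/L^\infty({\bf \Sigma}')$ is infinite-dimensional (each ${\bf \Sigma}'$-atom still carries the atomless $\mu$ in ${\bf \Sigma}$, and the ${\bf \Sigma}'$-atomless part contributes even more). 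The main hurdle is this last topological interchange: it is the H\"older reweighting---which crucially uses $\|\phi\|_p=1$ and $\phi\ne 0$ a.e.\ from (1)---that allows the finite-codimensional structure of $\M(X_\phi)+\ol{\M(X_\phi)}$ in $L^p(|\phi|^p d\mu)$ to survive the passage to the $w^*$-closure in $L^\infty(\mu)$.
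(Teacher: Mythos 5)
Your proposal is correct and follows essentially the same route as the paper: transfer to $P_\phi$ fixing the constants, apply Theorem \ref{1.6} or \ref{1.7} to identify $P_\phi$ with a conditional expectation $\E_\phi(\cdot\,|\,{\bf \Sigma}')$, then use condition (1) to collapse the finitely many atoms of ${\bf \Sigma}'$ in the finite-rank case and atomlessness to force ${\bf \Sigma}'={\bf \Sigma}$ in the finite-codimension case. The only differences are cosmetic: the paper extracts finite dimensionality (resp.\ finite codimension) of $L^p({\bf \Sigma}',|\phi|^p d\mu)$ directly from the density of $X_\phi+\ol{X_\phi}$ in $L^p(|\phi|^p d\mu)$ and the positivity of the conditional expectation, whereas you route the same information through $L^\infty({\bf \Sigma}')$ and the quotient $L^\infty(\mu)/L^\infty({\bf \Sigma}')$ --- and you in fact spell out the H\"older/duality step converting bounded $w^*$-convergence in $L^\infty(\mu)$ into weak convergence in $L^p(|\phi|^p d\mu)$, which the paper leaves implicit.
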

	
	\begin{proof}
		
		Suppose that $P$ is a contractive projection on $X$. Let $\varphi$ in $\Ran P$ be a fixed unit vector. Following the standard transfer argument, we deduce that 
		$$P_\varphi : X_\varphi \to X_\varphi; \quad f\mapsto \frac{P(\varphi f)}{\varphi}$$ is a contractive projection, where $$X_\varphi:= \{f\in L^p(|\varphi|^pd\mu): \varphi f\in X\}.$$ Applying the extension theorems (note that $\cal M(X_\varphi)=\cal M(X)$), we see that $P_\varphi$ extends to a conditional expectation $\E_\varphi(\cdot | {\bf \Sigma}_\varphi^P)$ on $L^p(|\varphi|^pd\mu)$, where ${\bf \Sigma}_\varphi^P$ is the sub $\sigma$-algebra generated by functions in $\Ran P/\varphi$. Moreover, the assumption (2) ensures that $X_\varphi+ \ol{X_\varphi}$ is dense in $L^p(|\varphi|^pd\mu)$.
		
		If $P$ is of finite rank, then so is $P_\varphi$. Since $\mathbb{E}_\varphi(\cdot|{\bf \Sigma}_\varphi^P)$ is a positive operator, the density of $X_\varphi+ \ol{X_\varphi}$ in $L^p(|\varphi|^pd\mu)$ implies $\mathbb{E}_\varphi(\cdot|{\bf \Sigma}_\varphi^P)$ is also of finite rank. Therefore, $L^p(\Omega, {\bf \Sigma}_\varphi^P, |\varphi|^p d\mu)$ is of finite dimension, and hence ${\bf \Sigma}_\varphi^P$ is generated by a finite measurable partition $$\Omega=\bigsqcup^n_{k=1} F_k,$$ where $F_k\in {\bf \Sigma}_\varphi^P$ are disjoint non null sets. Therefore, any ${\bf \Sigma}_\varphi^P$-measurable function $f$ is of the form 
		\begin{align*} 
			f=\sum_{k=1}^n c_k \one_{F_k},\quad c_k\in \C.
		\end{align*}
		The above identity holds for $f=g/\varphi$ with any $g\in \Ran P$. In particular, $g-c_1\varphi=0$ on $F_1$, and by assumption (1), $g/\varphi=c_1$ almost everywhere. Hence, for any $g\in \Ran P$, we have that $g$ is a complex constant multiple of $\varphi$. This shows $P$ is of rank one.
		
		Now suppose that ${\rm codim}\, \Ran P<\infty$. By the same reasoning, we deduce that the space $L^p(\Omega,{\bf \Sigma}^P_\varphi, |\varphi|^p d\mu)$ has finite codimension in $L^p(\Omega,{\bf \Sigma}, |\varphi|^pd\mu)$. If the probability space $(\Omega,{\bf \Sigma}, |\varphi|^pd\mu)$ is assumed to be atomless, then it is easy to show that $${\rm codim}\,L^p(\Omega,{\bf \Sigma}^P_\varphi, |\varphi|^p d\mu)<\infty$$ implies ${\bf\Sigma}^P_\varphi={\bf \Sigma}$. Thus $P$ is the identity. The proof is completed.
	\end{proof}

	\begin{center}
		{\bf{Acknowledgements}}
	\end{center}
	We are deeply grateful to Professor Kristian Seip for his valuable discussions and helpful suggestions. We would also like to express our sincere gratitude to Professor Yanqi Qiu for providing a simplification of the original proof of Lemma \ref{3.2}.
	\vspace{0.2cm}
	
	\noindent {\bf Funding~} 
	This work was supported by the National Key R\&D Program of China (2024YFA1013400) and the National Natural Science Foundation of China (Grant No. 12231005). Dilong Li was supported by the  National Natural Science Foundation of China (Grant No. 124B2005).


\end{document}